\newtheorem{pro}{Proposition}[section]
\newtheorem{teo}[pro]{Theorem}
\newtheorem{defi}[pro]{Definition}
\newtheorem{lem}[pro]{Lemma}
\newtheorem{cor}[pro]{Corollary}
\newtheorem{rk}[pro]{Remark}
\newtheorem{ex}[pro]{Example}
\newcommand{\Ext}{\mathrm{Ext}}
\newcommand{\Hom}{\mathrm{Hom}}
\newcommand{\End}{\mathrm{End}}
\newcommand{\A}{\mathcal{A}}
\newcommand{\C}{\mathcal{C}}
\newcommand{\T}{\mathcal{T}}
\newcommand{\X}{\mathcal{X}}
\newcommand{\Y}{\mathcal{Y}}
\newcommand{\Z}{\mathcal{Z}}
\newcommand{\Csf}{\mathsf{C}}
\newcommand{\Ksf}{\mathsf{K}}
\newcommand{\pd}{\mathrm{pd}}
\newcommand{\inj}{\mathrm{inj}}
\newcommand{\ann}{\mathrm{ann}}
\newcommand{\rad}{\mathrm{rad}}
\newcommand{\resdim}{\mathrm{resdim}}
\newcommand{\coresdim}{\mathrm{coresdim}}
\newcommand{\add}{\mathrm{add}}
\newcommand{\gen}{\mathrm{gen}}
\newcommand{\Ker}{\mathrm{Ker}}
\newcommand{\Cone}{\mathrm{Cone}}
\newcommand{\Ima}{\mathrm{Im}}
\newcommand{\findim}{\mathrm{fin.dim}}
\newcommand{\Coker}{\mathrm{CoKer}}
\newcommand{\modu}{\mathrm{mod}}
\newcommand{\homo}{\mathsf{K}^b(\mathrm{proj}(\Lambda))}
\newcommand{\HomK}{\mathrm{Hom}_{\mathsf{K}^b(\mathrm{proj}(\Lambda))}}
\newcommand{\pru}{P^\bullet_{\geq-1}}
\newcommand{\prn}{P^\bullet_{\geq-n}}
\newcommand{\pr}{P^\bullet}
\newcommand{\proj}{\mathrm{proj}}
\newcommand{\cogen}{\mathrm{cogen}}
\begin{document}

\title[$n$-term silting complexes]{$n$-term silting complexes in $\homo$}
%\thanks{}
\author{Luis Mart\'inez and Octavio Mendoza}
%\date{}
\thanks{2020 {\it{Mathematics Subject Classification}}. 16G10 (18G05; 18G80).\\
Key Words: Silting complexes; $\tau$-tilting, finitistic dimension; triangulated categories.\\
The authors thank Project PAPIIT-Universidad Nacional Aut\'onoma de M\'exico IN100520.}

%%%%%%%%%%%%%%%%%%%%%%%%%%%%%%%%%%%%%%%%%%%%%%%%%%%%%%%%%%%%%%%%%%%%%%%%%%%%%%
\begin{abstract} Let $\Lambda$ be an Artin algebra and $\homo$ be the triangulated category of bounded co-chain complexes in $\proj(\Lambda).$ It is well known \cite{AIR14} that two-terms silting complexes in $\homo$ are described by the $\tau$-tilting theory.  The aim of this paper is to give a characterization of certain $n$-term silting complexes in $\homo$ which are induced by $\Lambda$-modules. In order to do that, we introduce the notions of $\tau_n$-rigid,  $\tau_n$-tilting and $\tau_{n,m}$-tilting $\Lambda$-modules. The latter is both a generalization of $\tau$-tilting and tilting in $\modu(\Lambda).$  It is also stated and proved some variant, for $\tau_n$-tilting modules, of the well known Bazzoni's characterization for tilting modules \cite[Theorem 3.11]{Ba04}. We give some connections between $n$-terms presilting complexes in $\homo$ and $\tau_n$-rigid $\Lambda$-modules. Moreover, a characterization is given to know when a $\tau_n$-tilting $\Lambda$-module is $n$-tilting. We also study more deeply the properties of  the 
$\tau_{n,m}$-tilting $\Lambda$-modules and their connections of being $m$-tilting in some quotient algebras. We apply the developed $\tau_{n,m}$-tilting theory to the finitistic dimension and thus for $n=m=1,$ we get as a particular case \cite[Theorem 2.5]{Su21}. Finally, at the end of the paper we  discuss and state some open questions (conjectures)  that we consider crucial for the future develop of the $\tau_{n,m}$-tilting theory.
\end{abstract}  
\maketitle
%\centerline{}

\setcounter{tocdepth}{1}
\tableofcontents

\section{Introduction}

Let $\Lambda$ be an Artin $R$-algebra. We denote by $\modu(\Lambda)$ the category of finitely generated left $\Lambda$-modules and 
by $\proj(\Lambda)$ (respectively, $\inj(\Lambda)$)  the category of finitely generated projective (respectively, injective) left $\Lambda$-modules. 

Following \cite{AR75}, we have the usual duality functor $D_\Lambda:\modu(\Lambda)\to \modu(\Lambda^{op}),$ where 
$D_\Lambda:=\Hom_R(-,k)$ and $k$ is the injective envelope of $R/\rad(R),$ the duality functor 
$*:=\Hom_\Lambda(-,\Lambda):\proj(\Lambda)\to\proj(\Lambda^{op})$ and the Nakayama equivalence 
$\nu:\proj(\Lambda)\to\inj(\Lambda),$ where $\nu(P):=D_{\Lambda^{op}}(P^*).$ For $M\in\modu(\Lambda),$ the class of all the $\Lambda$-modules which are a quotient (respectively, a submodule) of finite direct sums of copies of $M$ is denoted by  $\gen(M)$ (respectively,  
$\cogen(M)).$

Let $\A$ be an additive category. For $A\in\A,$  $\add(A)$ is the class of all the direct summands of finite direct sums of copies of $M.$ We denote by $\mathsf{C}(\A)$ the category of co-chain complexes of objects in $\A.$ For a cochain $X^\bullet\in\mathsf{C}(\A)$ and an integer $m,$ we have that $X^m$ is the $m$-th component and $d^m_{X^\bullet}:X^m\to X^{m+1}$ is the $m$-th differential, respectively, of $X^\bullet$ in $\A.$ If $\A$ is abelian, we have the $m$-cocycle $Z_m(X^\bullet):=\Ker(d^m_{X^\bullet})$ and the $m$-coboundary $B_m(X^\bullet):=\Ima(d^{m-1}_{X^\bullet}),$  respectively, of $X^\bullet$ in $\A.$ Thus, there exists the 
cohomology functor $H^i:\mathsf{C}(\A)\to \A,$ for each integer $i,$ where $H^i(X^\bullet):=Z_i(X^\bullet)/B_i(X^\bullet).$ The homotopy category of cochain complexes of objects in $\A$ is denoted by $\mathsf{K}(\A).$  

For each $M\in\modu(\Lambda),$ we fix a minimal projective resolution of $M$
\begin{center}
$\cdots\to P^{-n}(M)\xrightarrow{\pi^{-n}_M}P^{-n+1}(M)\to \cdots \to P^{-1}(M)\xrightarrow{\pi^{-1}_M} P^0(M)\xrightarrow{\pi^{0}_M} M\to 0.$  
\end{center}
Associated to this minimal projective resolution of $M,$ we have the cochain complex $P^\bullet(M)\in\mathsf{C}(\modu(\Lambda))$ which agrees with this resolution in degree $i\leq 0$ and vanishes for any degree $i>0.$ Note that $H^0(P^\bullet(M))\simeq M$ and $H^i(P^\bullet(M))=0$ for any $i\neq 0.$ The $i$-th syzygy of $M$ is $\Omega^i(M):=\Ker(\pi^{-i+1}_M),$ where $\pi^1_M$ is the zero morphism $M\to 0.$ The Auslander-Reiten translation (AR translation, for short) of $M$ is 
$\tau(M):=\Ker(\nu(\pi^{-1}_M)).$ Following \cite{Iy07}, the $n$-AR translation of $M$ is $\tau_n(M):=\tau(\Omega^{n-1}(M)),$  for any integer $n\geq 1.$ Sometimes we write $\tau_{\small{\Lambda},n}(M)$ to  make it clear that we are computing $\tau_n(M)$ in the category $\modu(\Lambda).$

For any cochain complex $X^\bullet\in\mathsf{C}(\modu(\Lambda))$ and any integer $n,$ we have the truncated cochain subcomplex 
$X^{\bullet}_{\geq n}$ of $X^{\bullet},$ where $X^i_{\geq n}:=X^i$ if $i\geq n$ and $X^i_{\geq n}:=0$ elsewhere. Similarly, we have the truncated cochain complex $X^{\bullet}_{\leq n}$ which is a quotient  of $X^{\bullet}.$

The concept of triangulated category, introduced by J. L. Verdier in \cite{Verdier}, has been very useful for studying the category $\modu(\Lambda)$ for some Artin $R$-algebra $\Lambda.$ This is because certain triangulated categories become quite accessible if one applies the methods of representation theory \cite{Ha88}. The most famous example \cite{Ha88} of a triangulated category is the bounded derived category $\mathsf{D}^b(\modu(\Lambda))$ of cochain complexes over $\modu(\Lambda)$ having bounded cohomology, and can be identified with the  homotopy category $\mathsf{K}^{-,b}(\proj(\Lambda))$ of  cochain complexes which are bounded above and have bounded cohomology over the class $\proj(\Lambda).$ In this paper we will use the homotopy category  $\mathsf{K}^{b,\leq0}(\proj(\Lambda))$ whose objects are all the  
$X^\bullet\in \mathsf{K}^{b}(\proj(\Lambda))$ such that $H^k(X^\bullet)=0$ $\forall\,k>0.$
\

Silting theory is a topic originating from representation theory of Artin algebras which is an extension of tilting theory, it encompasses methods for studying derived equivalences which are widely employed in many areas of research \cite{AH19}. Tilting theory started as a module-theoretic interpretation of the reflection functors introduced by I. N. Bernstein, I. M. Gelfand and V. A. Ponomarev in the early 1970s.
\

Let $\Lambda$ be an Artin algebra. Consider a class $\X\subseteq\modu(\Lambda).$ The right orthogonal class  of $\X$ is $\X^\perp:=\cap_{i>0}\,\X^{\perp_i},$ where   
$\X^{\perp_i}:=\{M\in\modu(\Lambda)\;|\;\Ext^i_\Lambda(-,M)|_{\X}=0\}$ is the $i$-th right orthogonal class of $\X,$ for each positive integer $i.$ Dually, we have the left orthogonal class ${}^{\perp}\X$ and the $i$-th left orthogonal class ${}^{\perp_i}\X$ of $\X.$ Following Y. Miyashita in \cite{Mi86}, it is said that $T\in\modu(\Lambda)$ is {\bf $n$-tilting} if  $\pd(T)\leq n,$ $T\in T^\perp$ and there is an exact sequence $0\to {}_\Lambda\Lambda\to T_0\to T_1\to\cdots\to T_n\to 0$ with $T_i\in\add(T)$ for all $i\in[0,n].$ We just say that $T\in\modu(\Lambda)$ is {\bf tilting} if it is $n$-tilting for some $n.$
\

Consider $M\in\modu(\Lambda).$ We denote by $rk(M)$ the number of pairwise non-isomorphic indecomposable direct summands of $M.$ Following \cite{AIR14}, it is said that $M$ is {\bf $\tau$-rigid} if $\Hom(M,\tau(M))=0.$ Moreover, $M$ is called {\bf $\tau$-tilting} if it is $\tau$-rigid and $rk(M)=rk({}_\Lambda\Lambda).$ 
\

Let $\Z$ be a class of objects in the homotopy category $\homo$ of bounded cochain complexes in $\proj(\Lambda).$ For each positive integer $i,$ the  right $i$-th orthogonal class of $\Z$ is  $\Z^{\perp_i}:=\{P^\bullet\in\homo\;|\;\HomK(-,P^\bullet[i])|_{\Z}=0\};$ we also consider the right orthogonal class $\Z^{\perp_{>i}}:=\cap_{j>i}\Z^{\perp_j}.$  Dually, we have the left orthogonal classes ${}^{\perp_{>i}}\Z$  and ${}^{\perp_i}\Z$  of $\Z.$ Following \cite{AI12, AH19}, we recall that a cochain complex $P^\bullet\in \homo$ is {\bf presilting} if $P^\bullet\in(P^\bullet)^{\perp_{>0}}.$ Moreover, $P^\bullet$ is {\bf silting} if it is presilting and $\homo$ is the smallest triangulated category containing 
$\add(P^\bullet).$ By  \cite[Propositions 3.6(a) and 3.7 (a)]{AIR14}, we get that $M$ is a $\tau$-tilting $\Lambda$-module if, and only if, the truncated complex $P^\bullet_{\geq -1}(M)$ is silting. Thus, it is quite natural to characterize when $\prn(M)$ is presilting (silting) for an arbitrary $M\in\modu(\Lambda)$ and a positive integer $n.$

In order to state the main definitions, for any $M\in\modu(\Lambda),$ we consider the quotient Artin algebra $\Gamma_{\!\!M}:=\Lambda/\ann(M),$ where $\ann(M)$ is the annihilator of $M$ in $\Lambda.$ In particular $M\in \modu(\Gamma_{\!\!M}).$

\begin{defi} Let $\Lambda$ be an Artin Algebra,  $M\in\modu(\Lambda),$ 
$\Gamma_{\!\!M}:=\Lambda/\ann(M)$ and $m,n\geq 1.$
\begin{itemize} \item[(a)] The {\bf $\tau_n$-perpendicular category} $\mathcal{X}^{\perp_{\tau_n}}$ of $\mathcal{X}\subseteq\modu(\Lambda)$ is $$\mathcal{X}^{\perp_{\tau_n}}:={}^{\perp_0}\tau_n(\mathcal{X})\cap\cap_{i=1}^{n-1}\mathcal{X}^{\perp_i}.$$ 
\item[(b)]  $M$ is {\bf $\tau_n$-rigid} in $\modu(\Lambda)$  if $M\in M^{\perp_{\tau_n}}$. 
\item[(c)]  $M$ is {\bf $\tau_n$-tilting}  in $\modu(\Lambda)$ if $\prn(M)$ is silting in $\homo$.
\item[(d)] $M$ is  $\tau_{n,m}$-tilting in $\modu(\Lambda)$ if $M$ is $\tau_n$-tilting in $\modu(\Lambda)$ and $\tau_m$-rigid  in 
$\modu(\Gamma_{\!\!M}).$ 
\end{itemize}
\end{defi}

We recall that the term ``$\tau_n$-rigid" was firstly introduced by Karin M. Jacobsen and Peter Jorgesen in \cite{JJ20}. However, $\tau_n$-rigid for $M\in\modu(\Lambda)$ in the sense of \cite{JJ20}  means that $\Hom_\Lambda(M,\tau_nM)=0.$

Let $M\in\modu(\Lambda).$ Note that $M$ is $\tau_1$-rigid if, and only if, it is $\tau$-rigid. By Corollary \ref{CoroNAIR, 3.4}, we know that
${}^{\perp_0}\tau_n(M)\subseteq M^{\perp_n},$ and moreover, ${}^{\perp_0}\tau_n(M)= M^{\perp_n}$ if $\pd(M)\leq n.$ 
 We point out, from \cite[Remark 3.3, Corollary 3.7]{We13} that $M$ is tilting in 
 $\modu(\Lambda)$ if, and only if, $P^\bullet(M)$ 
is silting in $\homo.$ In particular, any tilting $\Lambda$-module $T$ is  $\tau_{n,n}$-tilting, for $n:=\max\{1,\pd(T)\},$ since $T$ 
is faithful. On the other hand, by Proposition \ref{n=1}, we have that  
$\tau_{1,1}$-tilting coincides with the notion of $\tau$-tilting.  Note that there exist $\tau_{2,2}$-tilting $\Lambda$-modules of infinity projective dimension which are not $\tau$-rigid, see Example \ref{ExImp}. 
\

This paper has several main results. The first of them (see Theorem \ref{NAIR, 3.4}) states that $M$ is $\tau_n$-rigid in $\modu(\Lambda)$ if, and only if, the $n$-term complex $\prn(M)$ is presilting in $\homo.$ With respect to ``Bazzoni's characterization'' of a $\tau_n$-tilting $M,$ 
we proved that $\gen_n(M)=M^{\perp_{\tau_n}}$ (see Theorem \ref{p4. NAIR, 2.2}) which is  relevant for the development of $\tau_{n,m}$-tilting theory. In Theorem \ref{c1}, we proved that $M$ is sincere and $rk(M)=rk(\Lambda)$ if $M$ is $\tau_n$-tilting in $\modu(\Lambda).$ Moreover, for a $\tau_n$-tilting, we give in Theorem \ref{TEO} a list of conditions which are equivalent to the property of being $n$-tilting. It is worth mentioning that if we take $n=1$ in Theorem \ref{TEO}, we get as a particular case \cite[Theorem 1.3 and Theorem 3.2]{Zh22}. In Section 6, we study more deeply the $\tau_{n,m}$-tilting modules. For example (see Theorem \ref{NAIR, 2.2}) If $M$ is $\tau_{n,m}$-tilting, then $M$ is $m$-tilting in the quotient algebra $\Gamma_{\!\!M}.$  However, if $M$ is $\tau_n$-rigid and sincere in $\modu(\Lambda),$ we were able to prove (only for $n=1,2$) that $M$ is $\tau_n$-tilting. The general case remains open (as can be seen in the Conjecture 3, at the end of the paper) and it is very closely connected with the so called ``$\tau_{n,m}$-tilting correspondence theorem'' (see Theorem \ref{corresT}). One application that can be obtained from 
Theorem \ref{NAIR, 2.2} is related with the finitistic dimension, see Corollary \ref{Corfindim}, which says that $\findim(\Lambda)\leq \findim(\End_{\Gamma_{\!\!M}}(M)^{op})+\pd_\Lambda(M)+m$ if $M$ is $\tau_{n,m}$-tilting in $\modu(\Lambda).$ It is also introduced the notion of $\tau_{n,m}$-compatible class in $\modu(\Lambda),$ see Definition \ref{compClass}. In Theorem \ref{Comp-tau}, we proved that if some class $\omega\subseteq\modu(\Lambda)$ is $\tau_{n,m}$-compatible, then there exists a basic sincere and $\tau_n$-rigid $T\in\modu(\Lambda)$  which is $\min\{m,n\}$-tilting in $\modu(\Gamma_{\!T})$ and $\omega=\add(T).$ Finally, Theorem \ref{EquiTaurig} gives us a very closely connection between $\tau_{n,m}$-tilting $\Lambda$-modules , $\tau_{n,m}$-compatible classes in $\modu(\Lambda)$ and $m$-tilting in a quotient algebra.

\section{Background and preliminary results}

We start this section by collecting all the background material that will be necessary in the sequel. First, we introduce some general notation.

Let $\Lambda$ be an Artin algebra and $\X\subseteq\modu(\Lambda).$ Following \cite{BMPS19}, we  say that the class $\X$ is {\bf resolving} if  it is closed under extensions,  epi-kernels (that is, under taking kernels of epimorphisms between objects in $\X$) and $\proj(\Lambda)\subseteq \X.$   If in addition $\X$ is closed under direct summands, we say that $\X$ is {\bf left saturated}. If the dual properties hold true for $\X,$ we say that $\X$ is {\bf coresolving} ({\bf right saturated}). The class $\X$ is {\bf left (right) thick}  if it is closed under extensions, epi-kernels (mono-cokernels) and direct summands.

The following is an useful characterization of $\tau$-rigid $\Lambda$-modules in terms of the bifunctor $\Ext^1_\Lambda(-,-).$

\begin{pro}\cite[Proposition 5.8]{AS81}\label{AS81, 5.8} For an Artin algebra $\Lambda$ and $M,N\in\modu(\Lambda),$ we have that  
$\Hom_\Lambda(M,\tau(N))=0$ $\Leftrightarrow$ $N\in{}^{\perp_1}\gen(M).$\end{pro}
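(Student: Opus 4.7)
The plan is to combine the Auslander-Reiten formula
$$D\Ext^1_\Lambda(Y,X)\;\cong\;\overline{\Hom}_\Lambda(X,\tau(Y))$$
(where $\overline{\Hom}$ denotes $\Hom$ modulo morphisms factoring through an injective module) with the description of $\tau(N)$ via a minimal projective presentation $P_1\xrightarrow{d}P_0\to N\to 0$: one has $\tau(N)=\Ker(\nu(d))$, and the identification $\Hom_\Lambda(X,\nu(P))\cong D\Hom_\Lambda(P,X)$ for $P\in\proj(\Lambda)$ yields the natural isomorphism
$$\Hom_\Lambda(X,\tau(N))\;\cong\; D\Coker\bigl(\Hom_\Lambda(P_0,X)\xrightarrow{d^{\ast}}\Hom_\Lambda(P_1,X)\bigr).$$

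For $(\Rightarrow)$, assume $\Hom_\Lambda(M,\tau(N))=0$, so $\Hom_\Lambda(P_0,M)\xrightarrow{d^{\ast}}\Hom_\Lambda(P_1,M)$ is surjective. For any $L\in\gen(M)$ with an epimorphism $\pi\colon M^k\twoheadrightarrow L$, projectivity of $P_0$ and $P_1$ gives $\pi_{\ast}\colon\Hom_\Lambda(P_i,M^k)\twoheadrightarrow\Hom_\Lambda(P_i,L)$ for $i=0,1$, and a short diagram chase propagates the surjectivity of $d^{\ast}$ from target $M^k$ to target $L$. Hence $\Hom_\Lambda(L,\tau(N))=0$, so $\overline{\Hom}_\Lambda(L,\tau(N))=0$, and the AR formula gives $\Ext^1_\Lambda(N,L)=0$; thus $N\in{}^{\perp_1}\gen(M)$.

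For $(\Leftarrow)$, assume $\Ext^1_\Lambda(N,L)=0$ for every $L\in\gen(M)$ and let $f\colon M\to\tau(N)$. Set $L:=\Ima(f)\in\gen(M)$; by the AR formula the inclusion $\iota\colon L\hookrightarrow\tau(N)$ factors as $\iota=\beta\alpha$ with $\alpha\colon L\hookrightarrow I$ and $\beta\colon I\to\tau(N)$ for some injective $I$. Using injectivity of $I$ to extend $\alpha$ along $L\hookrightarrow E(L)$, together with essentiality of $L$ in $E(L)$ to deduce injectivity of the extension, one obtains an embedding $E(L)\hookrightarrow\tau(N)$. Since $\tau(N)\hookrightarrow\nu(P_1)$ and $E(L)$ is an injective submodule of the injective $\nu(P_1)$, it is a direct summand: $\nu(P_1)=E(L)\oplus J$. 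The inclusion $E(L)\subseteq\Ker(\nu(d))$ forces $\nu(d)|_{E(L)}=0$. Transporting through the Nakayama equivalence $\nu^{-1}\colon\inj(\Lambda)\simeq\proj(\Lambda)$ yields a decomposition $P_1=P'\oplus P''$ with $P':=\nu^{-1}(E(L))$ and $d|_{P'}=0$; hence $P'\subseteq\Ker(d)\subseteq\rad(P_1)$ by minimality of the presentation. A direct summand of $P_1$ contained in its radical vanishes by Nakayama's lemma, so $P'=0$, whence $E(L)=0$, $L=0$, and $f=0$.

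The main obstacle is the $(\Leftarrow)$ direction: translating the AR-formula consequence (every relevant morphism factors through an injective) into the strict vanishing $f=0$ requires exploiting the minimality of the chosen projective presentation of $N$. Via the Nakayama equivalence, any injective submodule of $\tau(N)\subseteq\nu(P_1)$ lifts to a direct summand of $P_1$ contained in $\Ker(d)\subseteq\rad(P_1)$, which is forced to be zero.
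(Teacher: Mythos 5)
The paper does not prove this statement --- it is imported verbatim from Auslander--Smal\o{} as \cite[Proposition 5.8]{AS81} --- so there is no internal proof to compare against; your argument is correct and is essentially the classical one, resting on the identification $\Hom_\Lambda(X,\tau(N))\simeq D\,\Coker\bigl(\Hom_\Lambda(P_0,X)\to\Hom_\Lambda(P_1,X)\bigr)$ for $(\Rightarrow)$ and on the AR formula plus the absence of nonzero injective submodules in $\tau(N)$ for $(\Leftarrow)$. The only spot worth tightening is the embedding $E(L)\hookrightarrow\tau(N)$: after extending $\alpha$ to $\tilde\alpha\colon E(L)\to I$ you must still compose with $\beta$ and invoke essentiality of $L$ in $E(L)$ a second time to conclude that $\beta\tilde\alpha$ is injective, since its restriction to $L$ is the inclusion $\iota$. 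Your Nakayama argument through $\nu^{-1}$ is a correct self-contained proof that $\tau(N)$ has no nonzero injective submodule; one could also just quote \cite[IV.1 Proposition 1.10 (c)]{ARS95}, which the paper already uses elsewhere for exactly this purpose.
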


Following \cite{AB89}, for $M\in\modu(\Lambda)$ and $\X\subseteq\modu(M),$ an {\bf $\X$-coresolution} (of length $n$) of $M$ is an exact sequence $0\to M\to X_0\to X_1\to\cdots\to X_n\to 0$ with $X_i\in\X$ for all $i\in[0,n].$ The class of all the $M\in\modu(\Lambda)$ admitting an $\X$-coresolution of length $n$ is denoted by $\X^{\vee}_n.$ We also consider the class 
$\X^{\vee}:=\cup_{n\in\mathbb{N}}\;\X^{\vee}_n.$ The $\X$-{\bf{coresolution dimension}} 
of $M$ is $\coresdim_\X(M):=\min\{n\in\mathbb{N}\;:\; M\in \X^{\vee}_n\}.$ In case  $M\not\in \X^{\vee},$ we set 
$\coresdim_\X(M):=\infty.$ Dually, we have $\X$-resolutions of length $n,$ the classes $\X^\wedge$ and $\X^{\wedge}_n,$ and the 
$\X$-resolution dimension $\resdim_\X(M)$ of $M.$
\

Let $\C$ be an additive category and $\X$ be a class of objects in $\C.$ Following \cite{AS80}, it is said that a morphism $f:X\rightarrow C$ is  an $\X$-{\bf{precover}} if $X\in\X$ and $\mathrm{Hom}_\C(Z,f):\mathrm{Hom}_\C(Z,X)\rightarrow\mathrm{Hom}_\C(Z,C)$ is surjective for any $Z\in\X$. The class $\X$ is {\bf{precovering}} if every $C\in\C$ admits a $\X$-precover. Dually, we have the notion of $\X$-{\bf{preenvelope}} and {\bf{preenveloping}} class .
\

\begin{teo}\cite{AR91}\label{AR91, 5.5}  For an Artin algebra $\Lambda,$ the following statements hold true.
\begin{itemize}
\item[(a)] The map $T\mapsto T^\perp$ induces a bijection between iso-classes of basic tilting $\Lambda$-modules and preenveloping right saturated classes $\mathcal{X}\subseteq\modu(\Lambda)$ with $\modu(\Lambda)=\X^\vee.$
\item[(b)] If $T\in\modu(\Lambda)$ is tilting, then $\add(T)$ is a relative generator in $T^\perp.$
\end{itemize}
\end{teo}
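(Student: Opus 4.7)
The plan is to prove (a) and (b) in tandem, by reducing everything to the standard Bongartz-approximation machinery and to the classical identity $T^\perp=\gen(T)$ for tilting modules.

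For the forward direction of (a), I would fix a basic tilting module $T$ and check the four required properties of $\X:=T^\perp$ separately. Closure under extensions and direct summands is immediate from the long exact sequence of $\Ext_\Lambda^\ast(T,-)$ and its additivity. Closure under mono-cokernels follows from the same long exact sequence combined with dimension shifting: if $0\to A\to B\to C\to 0$ is exact with $A,B\in T^\perp$, then each fragment $\Ext^i_\Lambda(T,B)\to\Ext^i_\Lambda(T,C)\to\Ext^{i+1}_\Lambda(T,A)$ forces $\Ext^i_\Lambda(T,C)=0$ for $i\geq 1$. Every injective sits in $\X$ because $\Ext^i_\Lambda(-,I)=0$ for $i>0$. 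For the preenveloping property and for $\modu(\Lambda)=\X^\vee$, I would use the defining coresolution $0\to{}_\Lambda\Lambda\to T_0\to\cdots\to T_n\to 0$ of the tilting module together with the classical identity $T^\perp=\gen(T)$: given $M\in\modu(\Lambda)$, one applies $\Hom_\Lambda(M,-)$ to Bongartz-style pushouts of this resolution to construct a short exact sequence $0\to M\to T^M\to M'\to 0$ with $T^M\in\gen(T)=T^\perp$ and $\pd(M')<\pd(M)$ (after suitable shifting); iterating at most $n$ times yields a finite $T^\perp$-coresolution of $M$ whose initial map is a $T^\perp$-preenvelope.

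For the inverse direction of (a), I would start from a right saturated preenveloping class $\X$ with $\modu(\Lambda)=\X^\vee$. Applying the preenveloping property together with mono-cokernel closure successively to $\Lambda$ produces an $\X$-coresolution $0\to{}_\Lambda\Lambda\to X_0\to X_1\to\cdots\to X_n\to 0$ built from $\X$-preenvelopes (Wakamatsu's lemma guarantees that the kernels at each step are $\Ext^1$-orthogonal to $\X$, hence lie in ${}^{\perp_1}\X$). Let $T$ be the basic module with $\add(T)=\add(X_0\oplus\cdots\oplus X_n)$. One then verifies directly the three conditions defining a tilting module: $\pd(T)\leq n$ from the finite length of the coresolution, $T\in T^\perp$ because each $T_i$ belongs to $\X\subseteq T^\perp$ (combining the Wakamatsu kernels with the coresolution closure), and the last axiom is the coresolution itself. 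The inclusion $\X\subseteq T^\perp$ is clear from construction; the reverse inclusion is obtained by taking any $Y\in T^\perp$, using $\modu(\Lambda)=\X^\vee$ to get a finite $\X$-coresolution of $Y$, and showing by induction on its length (invoking mono-cokernel closure of $\X$ and the Ext-vanishing) that $Y\in\X$.

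For (b), given $X\in T^\perp$, the classical equality $T^\perp=\gen(T)$ (which I am already using in (a)) provides an epimorphism $p\colon T'\twoheadrightarrow X$ with $T'\in\add(T)$. Applying $\Hom_\Lambda(T,-)$ to $0\to\Ker p\to T'\to X\to 0$ and using $T\in T^\perp$ together with $X\in T^\perp$ gives $\Ext^i_\Lambda(T,\Ker p)=0$ for all $i\geq 1$, i.e.\ $\Ker p\in T^\perp$, which is exactly the assertion that $\add(T)$ is a relative generator in $T^\perp$.

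The step I expect to be the main obstacle is the bijectivity in (a): more precisely, showing that the tilting module $T$ built from an abstract $\X$ really satisfies $T^\perp=\X$ and is basic and unique up to isomorphism. The delicate point is the induction on the length of an $\X$-coresolution, where one must control simultaneously the mono-cokernel closure, the Wakamatsu-type $\Ext^1$-vanishing for preenvelopes, and the compatibility with direct summand closure so that different choices of $\X$-coresolutions of $\Lambda$ yield the same $\add(T)$.
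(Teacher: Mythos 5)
The paper does not actually prove this theorem: it is imported from \cite{AR91} by dualizing Theorems 5.4(b) and 5.5(a) there, so you are reproving a cited classical result. That is legitimate in principle, but two steps of your sketch are wrong as stated. First, the ``classical identity $T^\perp=\gen(T)$'' holds only for tilting modules of projective dimension at most one. The theorem concerns Miyashita tilting modules of arbitrary finite projective dimension, and for an $n$-tilting $T$ with $n\geq 2$ one only has $T^\perp=\gen_n(T)\subseteq\gen(T)$ (this is exactly Theorem \ref{Ba04, 3.11} of the paper); the reverse inclusion $\gen(T)\subseteq T^\perp$ fails in general. You invoke precisely the false inclusion in the forward direction of (a) when you write $T^M\in\gen(T)=T^\perp$: what actually places the middle term of the special preenvelope in $T^\perp$ is that it is an iterated extension of objects of $\add(T)$, together with $T\in T^\perp$ and closure of $T^\perp$ under extensions.

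Second, your proof of (b) breaks: for an arbitrary epimorphism $p\colon T'\to X$ with $T'\in\add(T)$ and $X\in T^\perp$, the kernel need not lie in $T^\perp$. The relevant fragment of the long exact sequence is $\Hom_\Lambda(T,T')\to\Hom_\Lambda(T,X)\to\Ext^1_\Lambda(T,\Ker(p))\to\Ext^1_\Lambda(T,T')=0$, so $\Ext^1_\Lambda(T,\Ker(p))=0$ if and only if $\Hom_\Lambda(T,p)$ is surjective, i.e.\ if and only if $p$ is an $\add(T)$-precover; your argument only controls $\Ext^i$ for $i\geq 2$. Concretely, for the path algebra of the quiver $1\to 2$ and the tilting module $T=P(1)\oplus S(1),$ the canonical epimorphism $P(1)\to S(1)$ has $S(1)\in T^\perp$ but kernel $S(2)\notin T^\perp$ since $\Ext^1_\Lambda(S(1),S(2))\neq 0.$ The fix is to take $p$ to be an $\add(T)$-precover, which is automatically surjective because $X\in\gen_n(T)\subseteq\gen(T).$ Finally, in the converse direction of (a) the points you yourself flag as delicate --- that $T^\perp=\X$ and that $\add(T)$ does not depend on the chosen coresolution of $\Lambda$ --- are the actual content of the Auslander--Reiten correspondence; the standard route identifies $\add(T)$ intrinsically as $\X\cap{}^{\perp}\X$ and splits any $Y\in T^\perp$ off its special $\X$-preenvelope, and your ``induction on the length of an $\X$-coresolution'' does not yet contain that argument.
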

\begin{proof} The item (a) follows from the dual of \cite[Theorem 5.5 (a)]{AR91},  and the item (b) from the dual of \cite[Theorem 5.4 (b)]{AR91}
\end{proof}

We point out the following relationship between $\tau$-rigid modules and presilting objects in $\homo.$

\begin{lem}\cite[Lemma 3.4]{AIR14}\label{AIR14, 3.4} For $M$, $N\in\modu(\Lambda),$ the following conditions are equivalent.
\begin{itemize}\item[(a)] $\Hom_\Lambda(N,\tau(M))=0$.
\item[(b)] $\HomK(\pru(M),\pru(N)[1])=0$.\end{itemize}
In particular, $M$ is $\tau$-rigid if, and only if, $\pru(M)$ is presilting in $\homo.$\end{lem}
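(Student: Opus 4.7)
The plan is to reduce both conditions to the vanishing of the same abelian group, namely the cokernel of $\Hom_\Lambda(\pi^{-1}_M, N): \Hom_\Lambda(P^0(M),N) \to \Hom_\Lambda(P^{-1}(M), N)$.

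First I would describe morphisms in $\HomK(\pru(M), \pru(N)[1])$ explicitly. The complex $\pru(M)$ is supported in degrees $-1, 0$ and $\pru(N)[1]$ in degrees $-2, -1$, so a chain map is determined by a single component $f: P^{-1}(M) \to P^0(N)$ (the degree-$0$ component is forced to be zero, and the chain-map condition is then automatic). A null-homotopy amounts to a pair $h: P^{-1}(M) \to P^{-1}(N)$ and $h': P^0(M) \to P^0(N)$ with $f = \pm\pi^{-1}_N h \pm h' \pi^{-1}_M$. Hence $\HomK(\pru(M), \pru(N)[1])$ is the quotient of $\Hom_\Lambda(P^{-1}(M),P^0(N))$ by the sum of the two subgroups $\pi^{-1}_N\circ \Hom_\Lambda(P^{-1}(M), P^{-1}(N))$ and $\Hom_\Lambda(P^0(M), P^0(N))\circ \pi^{-1}_M$.

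Next I would simplify this quotient. Using that $P^{-1}(M)$ is projective together with the short exact sequence $0\to \Omega(N) \to P^0(N) \xrightarrow{\pi^0_N} N \to 0$, postcomposition with $\pi^0_N$ is surjective with kernel $\Hom_\Lambda(P^{-1}(M),\Omega(N))$, which coincides with $\pi^{-1}_N\circ \Hom_\Lambda(P^{-1}(M),P^{-1}(N))$ (since any map into $\Omega(N)$ lifts through $P^{-1}(N)\twoheadrightarrow \Omega(N)$ by projectivity). Dividing out this subgroup identifies the quotient with $\Hom_\Lambda(P^{-1}(M), N)$ via $[f] \mapsto \pi^0_N f$; and under this identification, the image of the remaining subgroup is exactly $\Ima\Hom_\Lambda(\pi^{-1}_M, N)$ (lifting any $g: P^0(M) \to N$ to a map $P^0(M) \to P^0(N)$). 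Therefore $\HomK(\pru(M), \pru(N)[1]) \cong \Coker \Hom_\Lambda(\pi^{-1}_M, N)$.

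For the other side, I would use $\tau M = \Ker(\nu\pi^{-1}_M)$ to get the exact sequence $0 \to \tau M \to \nu P^{-1}(M) \xrightarrow{\nu\pi^{-1}_M} \nu P^0(M)$. Applying $\Hom_\Lambda(N,-)$ together with the standard isomorphism $\Hom_\Lambda(N,\nu P)\cong D_\Lambda \Hom_\Lambda(P,N)$ for $P\in\proj(\Lambda)$ (which is natural in $P$) yields $\Hom_\Lambda(N, \tau M) \cong D_\Lambda\bigl(\Coker \Hom_\Lambda(\pi^{-1}_M, N)\bigr)$. Since $D_\Lambda$ is a duality on finitely generated modules, this vanishes iff $\Coker \Hom_\Lambda(\pi^{-1}_M, N) = 0$, giving the equivalence (a)$\Leftrightarrow$(b). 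The ``in particular'' follows by taking $N=M$ and observing that $\HomK(\pru(M),\pru(M)[i])=0$ automatically for $i\geq 2$, since the supports of the two complexes are disjoint. The only mildly delicate point I anticipate is the sign bookkeeping in the null-homotopy relation coming from the shift, but this does not affect the resulting subgroups or the final conclusion.
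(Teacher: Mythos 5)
The paper does not prove this lemma; it simply cites \cite[Lemma 3.4]{AIR14}. Your argument is a correct, self-contained reconstruction of the standard proof given there: both sides are identified with $\Coker\bigl(\Hom_\Lambda(\pi^{-1}_M,N)\bigr)$, on one side via the explicit description of chain maps and homotopies between the two $2$-term complexes, and on the other via the natural isomorphism $\Hom_\Lambda(N,\nu P)\simeq D_\Lambda\Hom_\Lambda(P,N)$ applied to $0\to\tau M\to\nu P^{-1}(M)\to\nu P^0(M)$. All the steps check out (in particular the identification of the kernel of postcomposition with $\pi^0_N$ using projectivity of $P^{-1}(M)$, and the vanishing of $\HomK(\pru(M),\pru(M)[i])$ for $i\geq 2$ by degree reasons), and the sign ambiguity in the homotopy relation indeed does not affect the subgroups involved.
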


The following result shows the relationship between silting subcategories in $\homo$ and n-tilting $\Lambda$-modules.

\begin{pro}\cite[Remark 3.3, Theorem 3.5, Corollary 3.7]{We13}\label{We13, 3.7} Let $\Lambda$ be an Artin algebra and $M\in\modu(\Lambda).$ Then $M$ is tilting in $\modu(\Lambda)$ if, and only if, $\pr(M)$ is silting in $\homo.$
\end{pro}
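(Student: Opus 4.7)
The plan is to prove both directions separately, using as the central computational tool the fact that $P^\bullet(M)$ is a projective resolution, so for any bounded complex $X^\bullet$ of projectives
$$\HomK(P^\bullet(M),X^\bullet[i])\cong \Hom_{\mathsf{D}^b(\modu\Lambda)}(M,X^\bullet[i]).$$
In particular $\HomK(P^\bullet(M),P^\bullet(M)[i])\cong \Ext^i_\Lambda(M,M)$ for $i\geq 1$, and vanishes for $i<0$ by inspection of degrees (since $P^\bullet(M)$ is concentrated in degrees $\leq 0$).

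For the direction $(\Rightarrow)$: assume $M$ is $n$-tilting. Then $\pd(M)\leq n$ ensures $P^\bullet(M)\in\homo$. The condition $M\in M^{\perp}$ transfers directly through the identification above to give presilting. For the generation condition, split the coresolution $0\to\Lambda\to T_0\to\cdots\to T_n\to 0$ into short exact sequences $0\to K_j\to T_j\to K_{j+1}\to 0$ with $K_0=\Lambda$. Since every $T_j\in\add(M)$ has finite projective dimension, lifting each short exact sequence to projective resolutions produces a distinguished triangle in $\homo$ with middle term in $\add(P^\bullet(M))$. Descending induction on $j$ then places $P^\bullet(\Lambda)=\Lambda[0]$ in $\thick(\add(P^\bullet(M)))$, and since $\homo=\thick(\Lambda[0])$, silting follows.

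For the direction $(\Leftarrow)$: assume $P^\bullet(M)$ is silting. Boundedness of $P^\bullet(M)$ forces $\pd(M)=:n<\infty$, and presilting gives $\Ext^i_\Lambda(M,M)=0$ for all $i>0$, i.e.\ $M\in M^\perp$. It remains to produce the coresolution of $\Lambda$ by $\add(M)$. This is the real work: from $\Lambda[0]\in\thick(\add(P^\bullet(M)))$ one extracts a finite tower of triangles
$$P^\bullet(M)^{(a_0)}\to \Lambda[0]\to Y_1^\bullet\to,\quad P^\bullet(M)^{(a_1)}[1]\to Y_1^\bullet\to Y_2^\bullet\to,\ \ldots$$
terminating after $n$ steps (the length is controlled by the fact that $P^\bullet(M)$ lives in degrees $[-n,0]$ while $\Lambda$ lives in degree $0$). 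Taking the long exact cohomology sequences and using $H^k(P^\bullet(M))=0$ for $k\neq 0$ and $H^0(P^\bullet(M))\simeq M$, these triangles collapse into short exact sequences whose middle terms lie in $\add(M)$; splicing them yields the desired $0\to \Lambda\to T_0\to \cdots\to T_n\to 0$ with $T_i\in\add(M)$.

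The main obstacle is the last step: extracting an honest $\add(M)$-coresolution in $\modu(\Lambda)$ out of the purely triangulated statement $\Lambda[0]\in\thick(\add(P^\bullet(M)))$. The cleanest route is to invoke the standard co-t-structure associated to a silting object (as in the Bondarko/Mendoza--S\'aenz--Santiago machinery), whose co-heart is $\add(P^\bullet(M))$, and use that its truncations of the object $\Lambda[0]$ encode precisely the coresolution; controlling that the associated cohomology concentrates in a single degree per layer (so that the derived-level filtration descends to an exact sequence of modules) is where the presilting vanishing $\Ext^i_\Lambda(M,M)=0$ is genuinely used.
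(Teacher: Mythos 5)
The paper gives no proof of this proposition --- it is quoted directly from \cite{We13} --- so your argument has to stand on its own. The forward direction is correct: $\pr(T_j)\in\add(\pr(M))$ for $T_j\in\add(M)$, the short exact sequences of the coresolution lift to triangles as in Proposition \ref{Zi}, and descending induction places $\Lambda[0]$ in $\thick(\add(\pr(M)))$, while $M\in M^\perp$ transfers to the presilting condition through $\HomK(\pr(M),\pr(M)[i])\simeq\Ext^i_\Lambda(M,M).$ The gap is in the converse, exactly at the step you flag as the real work. The tower you write, $P^\bullet(M)^{(a_0)}\to\Lambda[0]\to Y_1^\bullet\to{}$, $P^\bullet(M)^{(a_1)}[1]\to Y_1^\bullet\to Y_2^\bullet\to{}$, expresses $\Lambda[0]$ as an iterated extension of objects of $\add(\pr(M))[i]$ with $i\geq0$; by \cite[Proposition 2.23]{AI12} such a decomposition is available precisely for objects of $\pr(M)^{\perp_{>0}},$ i.e.\ it needs $\HomK(\pr(M),\Lambda[i])\simeq\Ext^i_\Lambda(M,\Lambda)=0$ for $i>0.$ That fails for honest tilting modules: for the path algebra of $1\to2$ and the tilting module $M=P(1)\oplus S(1)$ one has $\Ext^1_\Lambda(S(1),P(2))\neq0,$ so $\Lambda[0]\notin\pr(M)^{\perp_{>0}}$ and the tower does not exist. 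Moreover, even where it does exist, applying $H^0$ to it yields a right $\add(M)$-resolution $M_\ell\to\cdots\to M_0\to\Lambda\to0$ (this is the mechanism behind Theorem \ref{p4. NAIR, 2.2}), not the Miyashita coresolution.

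The orthogonality you should use is the other one: $\HomK(\Lambda[0],\pr(M)[i])\simeq H^i(\pr(M))=0$ for $i>0$ by Lemma \ref{LemHsim}, so $\Lambda[0]\in{}^{\perp_{>0}}\mathcal{M}$ for $\mathcal{M}:=\add(\pr(M)),$ and the dual half of \cite[Proposition 2.23]{AI12} gives $\Lambda[0]\in\mathcal{M}[-\ell]*\cdots*\mathcal{M}[-1]*\mathcal{M},$ i.e.\ $\coresdim_{\mathcal{M}}(\Lambda[0])<\infty$: triangles $\Lambda[0]\to Q_0^\bullet\to C_1^\bullet\to\Lambda[1]$ and $C_j^\bullet\to Q_j^\bullet\to C_{j+1}^\bullet\to C_j^\bullet[1]$ with $Q_j^\bullet\in\mathcal{M}.$ Every object of $\mathcal{M}[-\ell]*\cdots*\mathcal{M}$ has vanishing cohomology in negative degrees, and the long exact sequences kill $H^{>0}(C_j^\bullet),$ so each $C_j^\bullet$ has cohomology concentrated in degree $0$; this is what collapses the tower to short exact sequences and, crucially, what makes $\Lambda\to H^0(Q_0^\bullet)$ injective (its kernel is $H^{-1}(C_1^\bullet)=0$) --- a point your sketch does not address but which is part of the Miyashita definition. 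With the tower reoriented in this way (compare the argument of Theorem \ref{NAIR, 2.2}(a) run for the untruncated complex), your strategy does go through.
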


For $M\in\modu(\Lambda)$ and $n\geq1,$ we denote by $\gen_n(M)$ the class of all the $X\in\modu(\Lambda)$ admitting an exact sequence $M_n\to M_{n-1}\to \cdots\to M_2\to M_1\to X\to 0,$ where $M_i\in\add(M)$ $\forall\,i\in[1,n].$

The  Bazzoni's characterization for $n$-tilting modules (in the sense of Angeleri-Coelho) is given in \cite[Theorem 3.11]{Ba04}.  In what follows we give the version for $n$-tilting modules (in the sense of Miyashita) in Artin algebras that can be found in \cite[Proposition 5.10, Theorem 3.14]{AM21}.

\begin{teo}  \label{Ba04, 3.11}  Let $\Lambda$ be an Artin algebra,  $M\in\modu(\Lambda)$ and $n\geq 1.$ Then: $M$ is $n$-tilting $\Leftrightarrow$  $\gen_n(M)=M^\perp$ $\Leftrightarrow$ $\gen_k(M)=M^\perp$ $\forall\,k\geq n.$
\end{teo}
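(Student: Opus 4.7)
The plan is to prove the cycle $(\mathrm{i})\Rightarrow(\mathrm{iii})\Rightarrow(\mathrm{ii})\Rightarrow(\mathrm{i})$, where $(\mathrm{i})$ is ``$M$ is $n$-tilting'', $(\mathrm{ii})$ is $\gen_n(M)=M^\perp$, and $(\mathrm{iii})$ is $\gen_k(M)=M^\perp$ for every $k\geq n$.

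For $(\mathrm{i})\Rightarrow(\mathrm{iii})$ I would treat the two inclusions separately. For $\gen_k(M)\subseteq M^\perp$, given $X\in\gen_k(M)$ with presentation $M_k\to M_{k-1}\to\cdots\to M_1\to X\to 0$, splice it into short exact sequences $0\to K_i\to M_i\to K_{i-1}\to 0$ with $K_0=X$, and dimension-shift: since $M\in M^\perp$ we get $\Ext^j(M,M_i)=0$ for $j\geq 1$, whence $\Ext^j(M,X)\cong\Ext^{j+k-1}(M,K_{k-1})$, and the latter vanishes for $j\geq 1$ because $\pd(M)\leq n\leq k$ forces $\Ext^{\geq k+1}(M,-)=0$. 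For the reverse inclusion $M^\perp\subseteq\gen_k(M)$, Theorem \ref{AR91, 5.5}(b) tells us that $\add(M)$ is a relative generator in $M^\perp$; iteratively choosing a surjective $\add(M)$-precover $M_1\twoheadrightarrow X$ whose kernel remains in $M^\perp$ (this is precisely where the precover property intervenes, via the long exact $\Ext$-sequence), then $M_2\twoheadrightarrow X_1$, and so on, produces after $k$ steps the desired length-$k$ presentation of $X$.

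The implication $(\mathrm{iii})\Rightarrow(\mathrm{ii})$ is trivial by specializing to $k=n$. For $(\mathrm{ii})\Rightarrow(\mathrm{i})$, rigidity $M\in M^\perp$ is immediate from $M\in\gen_n(M)=M^\perp$, and $M^\perp$ is automatically closed under extensions, direct summands and cokernels of monomorphisms. The main obstacle, and the heart of Bazzoni's theorem, is extracting the bound $\pd(M)\leq n$ from the identity $\gen_n(M)=M^\perp$; the route I would follow is the Artin-algebra adaptation of \cite{AM21}, exploiting closure of $M^\perp$ under cokernels of monomorphisms together with an Auslander--Reiten-duality/dimension-shift argument to force $\Ext^{n+1}(M,-)=0$. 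Once $\pd(M)\leq n$ is in hand, an $\add(M)$-coresolution $0\to\Lambda\to T_0\to\cdots\to T_n\to 0$ follows from Theorem \ref{AR91, 5.5}(a) after checking that $M^\perp$ is preenveloping and satisfies $\modu(\Lambda)=(M^\perp)^{\vee}$, both of which stem from $\gen_n(M)=M^\perp$ combined with standard $\add(M)$-approximation techniques.
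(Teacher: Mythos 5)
The paper does not actually prove this statement: it records it as known and defers the proof to \cite[Proposition 5.10, Theorem 3.14]{AM21} (the Artin-algebra version of Bazzoni's characterization). So there is no in-paper argument to compare against, and your proposal has to stand on its own merits.

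The direction you do work out, namely that $M$ $n$-tilting implies $\gen_k(M)=M^\perp$ for all $k\geq n$, is essentially correct. One small index slip: your dimension shift stops at $\Ext^{j+k-1}_\Lambda(M,K_{k-1})$, and for $j=1$ this lives in degree exactly $k$, which is not killed by $\Ext^{\geq k+1}_\Lambda(M,-)=0$; you need one further shift along the epimorphism $M_k\twoheadrightarrow K_{k-1}$ (whose kernel need not lie in $\add(M)$, but that does not matter) to land in degree $k+1>\pd(M)$. The reverse inclusion via Theorem \ref{AR91, 5.5}(b) is fine.

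The genuine gap is in $(\mathrm{ii})\Rightarrow(\mathrm{i})$, which is the entire content of Bazzoni's theorem. You correctly identify that the crux is extracting $\pd(M)\leq n$ from $\gen_n(M)=M^\perp$, but you give no argument for it: invoking ``an Auslander--Reiten-duality/dimension-shift argument'' and ``the Artin-algebra adaptation of \cite{AM21}'' is a pointer to the literature, not a proof, and this is precisely where all the difficulty lives (already for $n=1$ it is the hard half of the $\tau$-tilting/Bazzoni circle of results). The same applies to the last step: that $M^\perp$ is preenveloping, that $\modu(\Lambda)=(M^\perp)^\vee$, and that one can produce the coresolution $0\to\Lambda\to T_0\to\cdots\to T_n\to 0$ do not simply ``stem from standard $\add(M)$-approximation techniques''; one must show, for instance, that a left $\add(M)$-approximation of $\Lambda$ is a monomorphism with cokernel again in $M^\perp$, and this uses the equality $\gen_n(M)=M^\perp$ in an essential and nontrivial way. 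As written, this direction of your proposal is a citation dressed as a sketch --- which happens to be exactly how the paper itself handles the theorem, but it means the proposal does not constitute an independent proof of the statement.
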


 We exhibit now the relation between exact sequences in $\modu(\Lambda)$ and distinguished triangles in $\mathsf{K}(\proj(\Lambda)).$

\begin{pro}\cite[Proposition 3.5.14, Lemma 3.5.50, Proposition 3.5.51]{Zi14}\label{Zi} Let $0\to N\overset{f}{\to}M\overset{g}{\to}K\to0$ be an exact sequence in $\modu(\Lambda).$ Then there is a distinguished triangle $\pr(N)\xrightarrow{f^\bullet}\pr(M)\xrightarrow{g^\bullet}\pr(K)\to\pr(N)[1]$ in $\mathsf{K}(\proj(\Lambda)),$ such that $H^0(f^\bullet)=f$ and $H^0(g^\bullet)=g.$
\end{pro}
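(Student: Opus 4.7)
The plan is to build the required triangle as the cone of a chain-level lift of $f$, verify that its third term is a projective resolution of $K$, and then transport the triangle across a homotopy equivalence to $P^\bullet(K)$. The key input is the classical comparison theorem for projective resolutions.

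\textbf{Step 1: Lifting $f$ and $g$.} Using projectivity of each $P^{-i}(N)$ together with the fact that $P^\bullet(M)\to M\to 0$ is exact, I inductively construct a chain map $f^\bullet\colon P^\bullet(N)\to P^\bullet(M)$ such that the induced map on $H^0$ is exactly $f$. (This is the standard lifting along $f$.) Let $C^\bullet:=\mathrm{Cone}(f^\bullet)$, the cone in $\mathsf{C}(\proj(\Lambda))$, with its canonical maps $\iota\colon P^\bullet(M)\to C^\bullet$ and $\pi\colon C^\bullet\to P^\bullet(N)[1]$. Each term $C^i=P^{i+1}(N)\oplus P^i(M)$ is projective, and $C^i=0$ for $i>0$.

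\textbf{Step 2: $C^\bullet$ resolves $K$.} From the standard long exact sequence in cohomology associated with $P^\bullet(N)\xrightarrow{f^\bullet}P^\bullet(M)\xrightarrow{\iota}C^\bullet\xrightarrow{\pi}P^\bullet(N)[1]$ and the fact that $P^\bullet(N)$ and $P^\bullet(M)$ have cohomology concentrated in degree $0$ (equal to $N$ and $M$, respectively), I get $H^i(C^\bullet)=0$ for $i<0$ and $i>0$, while at degree $0$ the sequence
\begin{equation*}
0\to H^0(P^\bullet(N))\xrightarrow{H^0(f^\bullet)=f}H^0(P^\bullet(M))\xrightarrow{H^0(\iota)}H^0(C^\bullet)\to 0
\end{equation*}
shows (because $f$ is injective with cokernel $K$) that $H^0(\iota)$ identifies $H^0(C^\bullet)$ with $K$, in such a way that the induced map $M\to H^0(C^\bullet)$ is exactly $g$. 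Hence $C^\bullet$ is a projective resolution of $K$.

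\textbf{Step 3: Comparing with $P^\bullet(K)$.} By the comparison theorem in $\mathsf{K}(\proj(\Lambda))$, any two projective resolutions of $K$ are homotopy equivalent, and the equivalence may be chosen to realize any prescribed isomorphism on $H^0$. I therefore pick a homotopy equivalence $\phi\colon C^\bullet\to P^\bullet(K)$ with $H^0(\phi)$ equal to the identification $H^0(C^\bullet)\cong K$ coming from $\iota$. Define $g^\bullet:=\phi\circ\iota\colon P^\bullet(M)\to P^\bullet(K)$ and observe that $H^0(g^\bullet)=H^0(\phi)\circ H^0(\iota)=g$. Since distinguished triangles are preserved by replacing a vertex by a homotopy equivalent object (via $\phi$), the cone triangle becomes
\begin{equation*}
P^\bullet(N)\xrightarrow{f^\bullet}P^\bullet(M)\xrightarrow{g^\bullet}P^\bullet(K)\xrightarrow{\phi[1]^{-1}\circ\pi}P^\bullet(N)[1],
\end{equation*}
which is the claimed distinguished triangle in $\mathsf{K}(\proj(\Lambda))$.

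\textbf{Main obstacle.} The only delicate point is Step 3: one must ensure that the homotopy equivalence $\phi$ can be pinned down so that $H^0(g^\bullet)=g$ on the nose (not merely up to an automorphism of $K$). This is not a difficulty in principle—it is precisely the naturality of the comparison theorem—but it requires keeping track of the identifications $H^0(P^\bullet(-))\cong(-)$ throughout, and explicitly choosing the lift $C^\bullet\to P^\bullet(K)$ of the identity of $K$ rather than an arbitrary homotopy equivalence.
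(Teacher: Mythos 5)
Your proof is correct. Note that the paper offers no argument of its own here: Proposition \ref{Zi} is quoted directly from \cite{Zi14}, so there is no in-text proof to compare against. Your mapping-cone argument is a complete and standard way to establish the statement: the lift $f^\bullet$ exists by the comparison theorem, the long exact cohomology sequence of the cone triangle (using that $f$ is a monomorphism to kill $H^{-1}(C^\bullet)$) shows $C^\bullet$ is a projective resolution of $K$ with $H^0(\iota)$ identified with $g$, and the comparison theorem applied to a lift of $\mathrm{id}_K$ gives the homotopy equivalence $\phi\colon C^\bullet\to P^\bullet(K)$ needed to rewrite the cone triangle with third vertex $P^\bullet(K)$. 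You are also right that the only point requiring care is pinning down $\phi$ as a lift of the identity of $K$ (any two such lifts are homotopic, so $\phi$ is automatically a homotopy equivalence and $H^0(g^\bullet)=g$ holds on the nose). For comparison, the results cited from \cite{Zi14} reach the same triangle via the horseshoe lemma, building a resolution of $M$ with terms $P^{i}(N)\oplus P^{i}(K)$ so that the short exact sequence is resolved by a degreewise split exact sequence of complexes; that route produces the triangle without invoking the cone explicitly, but the two arguments are essentially interchangeable, and yours has the small advantage of keeping the fixed minimal resolutions $P^\bullet(N)$ and $P^\bullet(M)$ as the first two vertices from the start.
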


To finish this section, we recall from \cite[Section 3]{MSSS13} the notion of resolution (coresolution) dimension in a triangulated category $\T.$ For $\X$, $\Y\subseteq\T,$ we denote by $\X*\Y$ the class of all the objects $Z\in\T$ admitting  a distinguished triangle $X\to Z\to Y\to X[1]$ in 
$\T,$ with $X\in\X$ and $Y\in\Y.$
\

For each $n\in\mathbb{N}$ and $\X\subseteq\T,$ the class $\epsilon^\wedge_n(\X)\subseteq\T$ is defined recursively as follows:
 $\epsilon^\wedge_0(\X):=\X$ and  $\epsilon^\wedge_{n}(\X):=\X*\epsilon^\wedge_{n-1}(\X)[1]$ if $n\geq1.$ The $\X$-resolution dimension of 
 $M\in\T$ is $\resdim_\X(M):=\min\{n\in\mathbb{N}\;:\; M\in \epsilon^\wedge_{n}(\X)\}.$ Dually, we have the class $\epsilon^\vee_n(\X)$  and the $\X$-coresolution dimension $\coresdim_\X(M)$ of 
 $M\in\T.$
 \
 
Let  $\T:=\homo$ and $P^\bullet\in\homo.$ We set $a(P^\bullet):=\inf\{n\in\mathbb{Z}\;:\; P^n\neq 0\}$ and $b(P^\bullet):=\sup\{n\in\mathbb{Z}\;:\; P^n\neq 0\}.$ If 
$P^\bullet\neq 0,$ the integer $\ell(P^\bullet):=b-a+1$ is known as the length of the complex  $P^\bullet.$ In the trivial case $P^\bullet=0,$ we set $\ell(P^\bullet):=0.$ For the class $\X:=\add(\Lambda[0]),$ we will need later to find a bound of the resolution dimension $\resdim_\X(P^\bullet)$ in $\T$ for some complex  $P^\bullet\in \Ksf^{b}(\proj(\Lambda)),$ with $b(P^\bullet)\leq 0.$ Therefore, the following lemma will be very useful.

\begin{lem}\label{lNAIR, 2.2} Let $m\in\mathbb{Z}$ and $\X:=\add(\Lambda[-m])$ in $\homo.$ Then
\begin{center}
$\resdim_\X(P^\bullet)\leq m-\min\{m,a(P^\bullet)\},$ for all $P^\bullet\in\Ksf^b(\proj(\Lambda)$ with $b(P^\bullet)\leq m.$
\end{center}
\end{lem}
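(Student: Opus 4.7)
The plan is to induct on $k:=m-\min\{m,a(P^\bullet)\}\geq 0$. The base case $k=0$ is immediate: then $a(P^\bullet)\geq m$, and combined with $b(P^\bullet)\leq m$ and $a(P^\bullet)\leq b(P^\bullet)$ this forces $a(P^\bullet)=b(P^\bullet)=m$, so $P^\bullet=P^m[-m]\in\add(\Lambda[-m])=\X=\epsilon^\wedge_0(\X)$ and $\resdim_\X(P^\bullet)=0$.

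For the inductive step I assume $k>0$, so $a(P^\bullet)<m$. By the recursive definition $\epsilon^\wedge_k(\X)=\X*\epsilon^\wedge_{k-1}(\X)[1]$, it is enough to produce a distinguished triangle $X\to P^\bullet\to Q^\bullet[1]\to X[1]$ in $\homo$ with $X\in\X$ and $\resdim_\X(Q^\bullet)\leq k-1$. I would split into two subcases. If $b(P^\bullet)=m$, the degreewise split short exact sequence of complexes $0\to P^m[-m]\to P^\bullet\to P^\bullet_{\leq m-1}\to 0$ induces the triangle $P^m[-m]\to P^\bullet\to P^\bullet_{\leq m-1}\to P^m[-m+1]$; I take $X:=P^m[-m]\in\X$ and $Q^\bullet:=P^\bullet_{\leq m-1}[-1]$, so that $a(Q^\bullet)=a(P^\bullet)+1$ and $b(Q^\bullet)=m$. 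If instead $b(P^\bullet)<m$, I use the ``identity triangle'' $0\to P^\bullet\xrightarrow{\id}P^\bullet\to 0$ (coming from the iso $\id_{P^\bullet}$), with $X:=0\in\X$ and $Q^\bullet:=P^\bullet[-1]$; here $a(Q^\bullet)=a(P^\bullet)+1$ and $b(Q^\bullet)=b(P^\bullet)+1\leq m$. In either case $Q^\bullet\in\Ksf^b(\proj(\Lambda))$ satisfies $b(Q^\bullet)\leq m$ and, since $a(P^\bullet)<m$ gives $a(P^\bullet)+1\leq m$, the induction hypothesis yields
\[
\resdim_\X(Q^\bullet)\leq m-\min\{m,a(P^\bullet)+1\}=m-(a(P^\bullet)+1)=k-1,
\]
and the triangle then places $P^\bullet$ in $\X*\epsilon^\wedge_{k-1}(\X)[1]=\epsilon^\wedge_k(\X)$.

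The main point that requires care, rather than being a real obstacle, is matching the shape of each of the two triangles with the recursive definition of $\epsilon^\wedge_k(\X)$, which makes $Q^\bullet$ appear through an extra shift by $[-1]$. The second subcase $b(P^\bullet)<m$ may look wasteful, but it is essential: when the top degree of $P^\bullet$ is strictly less than $m$, every morphism $\Lambda[-m]\to P^\bullet$ in $\homo$ vanishes, so one cannot chip off any non-trivial summand in $\X$ and must instead drag $P^\bullet$ one degree closer to $m$ via a shift before the ``chipping'' argument of the first subcase becomes available.
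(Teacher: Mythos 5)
Your proof is correct and follows essentially the same route as the paper: induction on $m-\min\{m,a(P^\bullet)\}$ using the degreewise split truncation triangle $P^m[-m]\to P^\bullet\to P^\bullet_{\leq m-1}\to P^m[-m+1]$. The only cosmetic difference is your case split on whether $b(P^\bullet)=m$; the paper uses the same triangle uniformly, since when $b(P^\bullet)<m$ the summand $P^m[-m]$ is simply zero and your second subcase is the degenerate instance of the first.
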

\begin{proof}  Let $P^\bullet\in\Ksf^b(\proj(\Lambda)$ with $b(P^\bullet)\leq m.$ 
We will proceed by induction on $n:=m-\min\{m,a(P^\bullet)\}.$ 
For $n=0,$ it is clear that $a(P^\bullet)=m=b(P^\bullet)$ and thus 
$P^\bullet\in\X.$ In particular, $\resdim_\X(P^\bullet)=0.$
\

Let $n\geq 1.$ Thus $P^\bullet\neq 0$ and $a(\pr)<m.$ Consider the exact sequence $0\to P^m[-m]\to\pr\to\pr_{\leq m-1}\to0$ in $\mathsf{C}(\proj(\Lambda)).$ Then, we get the distinguished triangle 
$\eta:\;\pr_{\leq m-1}[-1]\to P^m[-m]\to\pr\to\pr_{\leq m-1}$ in $\homo.$
Note that $a(\pr_{\leq m-1}[-1])=a(\pr)+1$ and $b(\pr_{\leq m-1}[-1])\leq m.$ Hence, by the inductive hypothesis $\resdim_\X(\pr_{\leq m-1}[-1])\leq m-a(\pr)-1.$ Therefore, from $\eta,$ we conclude that $\resdim_\X(P^\bullet)\leq m-a(\pr)$ since 
$P^m[-m]\in\X.$
\end{proof}

\section{$\tau_n$-rigid modules}

The main objective of this section is to study some properties of the $\tau_n$-rigid $\Lambda$-modules  and  establish a connection with the presilting objects in $\homo.$ The main idea is to give a generalization of the  relationship between $\tau$-rigid $\Lambda$-modules and the 2-terms presilting complexes in $\homo$ \cite{AIR14}.

\begin{lem}\label{l1, NAIR, 3.4}For an Artin algebra $\Lambda,$  $m\in\mathbb{Z}$, $n\in\mathbb{N}^+,$ $P^\bullet\in\mathsf{C}(\proj(\Lambda))$ and $Q^\bullet\in\mathsf{C}(\modu(\Lambda)),$ the following statements hold true.
\begin{itemize}
\item[(a)] Let $f^\bullet: P^\bullet\to Q^\bullet$ be a morphism in $\mathsf{C}(\modu(\Lambda)),$ where $P^\bullet=P^\bullet_{\geq m},$  $Q^\bullet=Q^\bullet_{\leq m+n-1}$ and $H^{m+h}(Q^\bullet)=0$ $\forall\, h\in [0,n-1).$ If there exist morphisms of $\Lambda$-modules $s^{m+n}: P^{m+n}\to Q^{m+n-1}$  and $s^{m+n-1}: P^{m+n-1}\to Q^{m+n-2}$ such that  $f^{m+n-1}=s^{m+n}d^{m+n-1}_{P^\bullet}+d^{m+n-2}_{Q^\bullet}s^{m+n-1},$ then $f^\bullet=0$ in $\mathsf{K}(\modu(\Lambda)).$
\item[(b)] Let  $g^\bullet:P^\bullet_{\geq m}\to Q^\bullet_{\geq m}$ be a morphism in $\mathsf{C}(\modu(\Lambda)).$ 
If  $H^i(Q^\bullet)=0$ $\forall\,i\in[m-n+1,m],$ then there exists a morphism of cochain complexes $\overline{g}^\bullet:P^\bullet_{\geq m-n}\to Q^\bullet_{\geq m-n}$ such that $\overline{g}^\bullet|_{P^\bullet_{\geq m}}=g^\bullet.$
\end{itemize}
\end{lem}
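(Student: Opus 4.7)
The plan for both parts is the same: a descending induction on degree, built on the standard pattern that an obstruction map whose postcomposition with $d_{Q^\bullet}$ vanishes factors through $Z_*(Q^\bullet)$; the hypothesized vanishing of cohomology of $Q^\bullet$ upgrades this to a map into the image of the preceding differential of $Q^\bullet$; and projectivity of $P^\bullet$ lifts it along that differential. No other ingredients are needed.

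For (a), I would construct the missing homotopy components $s^i:P^i\to Q^{i-1}$ for $i=m+n-2,m+n-3,\ldots,m$ by descending induction, setting $s^i=0$ outside the range $[m,m+n]$ (consistent with $P^{<m}=0$ and $Q^{>m+n-1}=0$). At step $i$, with $s^{i+1},\ldots,s^{m+n}$ already in hand, I would form $\phi^i:=f^i-s^{i+1}d^i_{P^\bullet}:P^i\to Q^i$. Using that $f^\bullet$ is a chain map, the inductive identity at degree $i+1$, and $d^2=0$ in $P^\bullet$, a short calculation gives $d^i_{Q^\bullet}\phi^i=0$, so $\phi^i$ lands in $Z_i(Q^\bullet)$. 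Since $i\in[m,m+n-2]$ the hypothesis $H^i(Q^\bullet)=0$ gives $Z_i(Q^\bullet)=B_i(Q^\bullet)=\Ima\,d^{i-1}_{Q^\bullet}$, and projectivity of $P^i$ lifts $\phi^i$ along $d^{i-1}_{Q^\bullet}$ to produce $s^i$. The base of the induction at $i=m+n-2$ uses precisely the given identity at degree $m+n-1$ together with the given $s^{m+n-1},s^{m+n}$; the boundary degrees $i=m+n,m+n+1$ are trivial because $Q^{m+n}=0$, and below $i=m$ everything is zero because $P^{m-1}=0$.

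For (b), I would construct $\overline{g}^{i}:P^{i}\to Q^{i}$ for $i=m-1,m-2,\ldots,m-n$ by descending induction, setting $\overline{g}^i:=g^i$ for $i\geq m$ and requiring the chain-map identity $d^i_{Q^\bullet}\overline{g}^i=\overline{g}^{i+1}d^i_{P^\bullet}$ at each step. The composite $\overline{g}^{i+1}d^i_{P^\bullet}:P^i\to Q^{i+1}$ lies in $Z_{i+1}(Q^\bullet)$ because $d^{i+1}_{Q^\bullet}\overline{g}^{i+1}d^i_{P^\bullet}=\overline{g}^{i+2}d^{i+1}_{P^\bullet}d^i_{P^\bullet}=0$ by the previous inductive step (or, at the top, by $g^\bullet$ being a chain map). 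For $i\in[m-n,m-1]$ one has $i+1\in[m-n+1,m]$, so the hypothesis yields $Z_{i+1}(Q^\bullet)=B_{i+1}(Q^\bullet)=\Ima\,d^i_{Q^\bullet}$, and projectivity of $P^i$ produces the required $\overline{g}^i$.

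There is no genuine obstacle beyond careful index bookkeeping: in each part the range of degrees where $H^\bullet(Q^\bullet)=0$ is assumed matches exactly the range of inductive lifts that need to be performed. The only thing I expect to verify carefully is that the base cases line up (the given relation at degree $m+n-1$ in (a), and the given chain map at degrees $\geq m$ in (b)), so that the descending induction has a valid starting point. Once that bookkeeping is in place, the argument is a routine application of the cocycle/coboundary/lifting pattern, with projectivity of $P^\bullet$ providing the lifts at every step.
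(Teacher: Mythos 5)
Your proposal is correct and follows essentially the same argument as the paper: in both parts the obstruction at each degree is shown to be a cocycle, the assumed vanishing of $H^\bullet(Q^\bullet)$ turns it into a coboundary, and projectivity of the relevant component of $P^\bullet$ provides the lift. The paper merely packages part (a) as an induction on $n$ via restriction to $P^\bullet_{\geq m+1}$, which unrolls to exactly your descending induction on the degree.
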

\begin{proof} Consider the factorization $Q^{m-1}\xrightarrow{p}B_m(Q^\bullet)\xrightarrow{i}Q^m$ of $d^{m-1}_{Q^\bullet}$ trough its image.
\

(a) Note that $f^\bullet: P^\bullet\to Q^\bullet$ can be seen as follows
$$\small{ {\xymatrix{0\ar[r]\ar[d]&P^m\ar[d]^{f^m}\ar[r]&\cdots\ar[r]&P^{m+k}\ar[r]^{d^{m+k}_{P^\bullet}}\ar[d]^{f^{m+k}}&P^{m+k+1}\ar[r]\ar[d]^{f^{m+k+1}}&\cdots\ar[r]&P^{m+n}\ar[d]\\
Q^{m-1}\ar[r]_{d^{m-1}_{Q^\bullet}}&Q^m\ar[r]&\cdots\ar[r]&Q^{m+k}\ar[r]_{d^{m+k}_{Q^\bullet}}&Q^{m+k+1}\ar[r]&\cdots\ar[r]&0.}}}$$
We use induction on $n$ to prove that $f^\bullet=0$ in $\mathsf{K}(\modu(\Lambda)).$ For $n=1,$ there is nothing to proof. 
\

Let $n\geq 2.$ Consider the complex $P'^\bullet:=P^\bullet_{\geq m+1}$ and the morphism $f'^\bullet:P'^\bullet\to Q^\bullet$ of complexes which is the restriction of $f^\bullet$ on $P'^\bullet.$ By the inductive hypothesis, we have that $f'^\bullet=0$ in  $\mathsf{K}(\modu(\Lambda)).$ Then, there is a family $\{s^{m+k}:P^{m+k}\to Q^{m+k-1}\}_{k=1}^n$ of $\Lambda$-morphisms such that $f^{m+k}=s^{m+k+1}d^{m+k}_{P^\bullet}+d^{m+k-1}_{Q^\bullet}s^{m+k}$ for all $k\in[1,n].$ Since $f^{m+1}=s^{m+2}d^{m+1}_{P^\bullet}+d^{m}_{Q^\bullet}s^{m+1},$ we get $d^m_{Q^\bullet}(f^m-s^{m+1}d^m_{P^\bullet})=0$ and hence there is $r:P^m\to B_m(Q^\bullet)$ such that $ir=f^m-s^{m+1}d^m_{P^\bullet}.$ Using now that $P^m$ is projective, we have $s^m:P^m\to Q^{m-1}$ with $r=ps^m.$ Therefore $f^m=s^{m+1}d^m_{P^\bullet}+d^{m-1}_{Q^\bullet}s^m$ and thus $f^\bullet=0$ in $\mathsf{K}(\modu(\Lambda))$ since $P^\bullet=P^\bullet_{\geq m}.$
\

(b) Since $d^m_{Q^\bullet}g^m d^{m-1}_{P^\bullet}=d^m_{Q^\bullet}d^{m-1}_{Q^\bullet}g^{m-1}=0,$ we get $t:P^{m-1}\to B_m(Q^\bullet)$ such that $it=g^m d^{m-1}_{P^\bullet}.$ Using now that $P^{m-1}$ is projective, there is some $g^{m-1}: P^{m-1}\to Q^{m-1}$ with $p g^{m-1}=t$ and thus $d^{m-1}_{Q^\bullet}g^{m-1}=g^m d^{m-1}_{P^\bullet}.$ By repeating the above procedure, we obtain morphisms $g^{m-i}:P^{m-i}\to Q^{m-i}$ for each $i\in[1,n],$ and hence a morphism of cochain complexes $\overline{g}^\bullet:P^\bullet_{\geq m-n}\to Q^\bullet_{\geq m-n}$ such that $\overline{g}^\bullet|_{P^\bullet_{\geq m}}=g^\bullet.$
\end{proof}

The following lemma will be very useful in different parts of the paper.

\begin{lem}\label{lemtrunc} Let $n,j\in\mathbb{N}^+$ and $P^\bullet,Q^\bullet\in\Ksf^{\leq 0}(\proj(\Lambda)).$ Then 
$$\Hom_{\Ksf(\proj(\Lambda))}(P^\bullet_{\geq -n},Q^\bullet_{\geq -n}[j])\simeq \Hom_{\Ksf(\proj(\Lambda))}(P^\bullet_{\geq -n},Q^\bullet[j]).$$
\end{lem}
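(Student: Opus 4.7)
The approach is to reduce the claim to a disjoint-support vanishing via a standard truncation triangle. Since $Q^\bullet_{\geq -n}$ is a subcomplex of $Q^\bullet$ with quotient $Q^\bullet_{\leq -n-1}$ (in the notation of Section 1), my first step is to form the short exact sequence of cochain complexes
$$0 \to Q^\bullet_{\geq -n} \to Q^\bullet \to Q^\bullet_{\leq -n-1} \to 0.$$
At each degree $i$ this sequence is either $0\to 0\to Q^i\xrightarrow{\id}Q^i\to 0$ or $0\to Q^i\xrightarrow{\id}Q^i\to 0\to 0$, so it splits termwise. Consequently it induces a distinguished triangle $Q^\bullet_{\geq -n}\to Q^\bullet\to Q^\bullet_{\leq -n-1}\to Q^\bullet_{\geq -n}[1]$ in $\Ksf(\proj(\Lambda))$, and this is in fact the only ``category-theoretic'' input of the proof.

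Next I would apply the cohomological functor $\HomK(P^\bullet_{\geq -n},-)$ (shifted by $j$) to this triangle, obtaining the long exact sequence
$$\HomK(P^\bullet_{\geq -n},Q^\bullet_{\leq -n-1}[j-1])\to \HomK(P^\bullet_{\geq -n},Q^\bullet_{\geq -n}[j])\xrightarrow{\phi}\HomK(P^\bullet_{\geq -n},Q^\bullet[j])\to \HomK(P^\bullet_{\geq -n},Q^\bullet_{\leq -n-1}[j]),$$
where $\phi$ is exactly the morphism induced by the inclusion $Q^\bullet_{\geq -n}\hookrightarrow Q^\bullet$, i.e.\ the canonical candidate for the claimed isomorphism. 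Hence it is enough to prove that $\HomK(P^\bullet_{\geq -n},Q^\bullet_{\leq -n-1}[k])=0$ for every $k\geq 0$, since for $j\geq 1$ both $j-1$ and $j$ lie in this range.

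For this vanishing the argument is purely by inspection of supports: $P^\bullet_{\geq -n}$ is concentrated in degrees between $-n$ and $0$, while $Q^\bullet_{\leq -n-1}[k]$ is concentrated in degrees $\leq -n-1-k\leq -n-1$. These two ranges are disjoint, so in fact every morphism in the category $\Csf(\proj(\Lambda))$ between them is zero in each component, hence trivially null-homotopic. I do not anticipate any genuine obstacle here; the only point that requires a sentence of verification is the termwise splitting above, and everything else is formal bookkeeping with truncations and shifts.
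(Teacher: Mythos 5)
Your proof is correct and is essentially the paper's own argument: form the termwise-split truncation triangle $Q^\bullet_{\geq -n}\to Q^\bullet\to Q^\bullet_{\leq -n-1}\to Q^\bullet_{\geq -n}[1]$, apply $\HomK(P^\bullet_{\geq -n},-)$, and kill the two outer terms because $P^\bullet_{\geq -n}$ lives in degrees $[-n,0]$ while $Q^\bullet_{\leq -n-1}[k]$ lives in degrees $\leq -n-1$ for $k\geq 0$. The paper leaves the disjoint-support vanishing implicit; you spell it out, but the route is identical.
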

\begin{proof} The exact sequence $0\to Q^\bullet_{\geq -n}[j]\to Q^\bullet[j]\to Q^\bullet_{\leq -n-1}[j]\to 0$ in $\Csf(\proj(\Lambda))$ induces the distinguished triangle $\Ksf(\proj(\Lambda))$
\begin{center}
$Q^\bullet_{\leq -n-1}[j-1]\to Q^\bullet_{\geq -n}[j]\to Q^\bullet[j]\to Q^\bullet_{\leq -n-1}[j].$
\end{center}
  Then, by applying  $(P^\bullet_{\geq -n},-):=\Hom_{\Ksf(\proj(\Lambda))}(P^\bullet_{\geq -n},-)$ to this triangle and since $(P^\bullet_{\geq -n},Q^\bullet_{\leq -n-1}[j-1])=0$ and $(P^\bullet_{\geq -n},Q^\bullet_{\leq -n-1}[j])=0,$ 
 we get the result.
\end{proof}

\begin{pro}\label{p1, NAIR, 3.4} For an Artin algebra $\Lambda$ and $M$, $N\in\modu(\Lambda)$, the following statements hold true.
\begin{itemize}
\item[(a)] Let $1\leq j \leq n.$ Then $\Hom_\Lambda(N,\tau_j(M))=0$ implies that $$\HomK(\prn(M),\prn(N)[j])=0.$$ Moreover, the reverse implication holds true if $j=n.$
\item[(b)] Let $n\geq2$ and $1\leq j<n.$ Then $\HomK(\prn(M),\prn(N)[j])=0$ if, and only if, $\Ext^j_\Lambda(M,N)=0.$
\end{itemize}
\begin{proof}(a) Let $1\leq j<n$ and $f^\bullet: P^\bullet_{\geq-n}(M)\to P^\bullet_{\geq-n}(N)[j]$ in $\mathsf{C}(\proj(\Lambda)).$ This morphism induces the following diagram 
$$\xymatrix{&P^{-j}(M)\ar[d]_{f^{-j}}\ar[r]^{\pi^{-j}_M}&P^{-j+1}(M)\\
P^{-1}(N)\ar[r]_{\pi^{-1}_N}&P^0(N).}$$
Since $\Hom_\Lambda(N,\tau_j(M))=0$ and $\tau_j(M)=\tau(\Omega^{j-1}(M)),$ we get from Lemma \ref{AIR14, 3.4} that $\Hom_{\Ksf^b(\proj(\Lambda))}((P^{-j}(M)\to P^{-j+1}(M)),(P^{-1}(N)\to P^0(N))[1])=0.$ Thus, there exist morphisms $s: P^{-j+1}(M)\to P^0(N)$ and  $s': P^{-j}(M)\to P^{-1}(N)$ such that $f^{-j}=s\pi^{-j}+\pi^{-1}s'.$ Then, by Lemma \ref{l1, NAIR, 3.4} (a), we get that $f^\bullet=0$ in $\homo.$
\

For the case $j=n,$ we have that $\Hom_{\Ksf^b(\proj(\Lambda))}(\prn(M),\prn(N)[j])$ is equal to $\Hom_{\Ksf^b(\proj(\Lambda))}((P^{-n}(M)\to P^{-n+1}(M)),((P^{-1}(N)\to P^{0}(N))[1]) .$ Therefore, by Lemma \ref{AIR14, 3.4}, we get that $\Hom_{\Ksf^b(\proj(\Lambda))}(\prn(M),\prn(N)[n])=0$ if, and only if, $\Hom_\Lambda(N,\tau_n(M))=0.$
\

(b)  Let $n\geq2$ and $1\leq j<n$.  We recall the well known isomorphism 
$$(*)\quad \Ext^j_\Lambda(M,N)\simeq \Hom_{\Ksf(\proj(\Lambda))}(P^\bullet(M),P^\bullet(N)[j]).$$

$(\Rightarrow)$ Assume that $\HomK(\prn(M),\prn(N)[j])=0.$ Let us prove that $\Ext^j_\Lambda(M,N)=0.$ By $(*)$ it 
is enough to prove that $(P^\bullet(M),P^\bullet(N)[j])=0.$ Indeed, let $f^\bullet: \pr(M)\to\pr(N)[j]$ be in $\Csf(\proj(\Lambda)).$ Consider the restriction $g^\bullet:=f^\bullet|_{P^\bullet_{\geq -n}(M)}:P^\bullet_{\geq -n}(M)\to \pr(N)[j].$ Since, by Lemma \ref{lemtrunc}, 
$g^\bullet=0$ in $\Ksf(\proj(\Lambda))$ and 
$H^k(\pr(N)[j])=0$ $\forall\,k\leq -j-1,$ we get by Lemma \ref{l1, NAIR, 3.4} (a) that $f^\bullet=0$ in $\Ksf(\proj(\Lambda)).$
\

$(\Leftarrow)$ Assume that $ (P^\bullet(M),P^\bullet(N)[j])=0.$ Let $f^\bullet: P^\bullet_{\geq -n}(M)\to P^\bullet(N)[j]$ be in $\Csf(\proj(\Lambda)).$ Since $H^k(\pr(N)[j])=0$ $\forall\,k\leq -j-1,$ we get by Lemma \ref{l1, NAIR, 3.4} (b) the existence of $\overline{f}^\bullet: P^\bullet(M)\to P^\bullet(N)[j]$ in $\Csf(\proj(\Lambda))$ such that $\overline{f}^\bullet|_{P^\bullet_{\geq -n}(M)}=f^\bullet.$ Using that $\overline{f}^\bullet=0$ in $\Ksf(\proj(\Lambda)),$ we get that $f^\bullet=0$ in $\Ksf(\proj(\Lambda)).$\end{proof}\end{pro}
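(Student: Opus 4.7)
For part (a), the plan is to reduce to the two-term case via Lemma \ref{AIR14, 3.4} and then to propagate the resulting null-homotopy data down the resolution using Lemma \ref{l1, NAIR, 3.4}(a). Concretely, I would take a chain map $f^\bullet\colon \prn(M)\to \prn(N)[j]$ and isolate its ``top'' component $f^{-j}\colon P^{-j}(M)\to \prn(N)[j]^{-j}=P^0(N)$ together with the neighbouring differentials $\pi^{-j}_M$ and $\pi^{-1}_N$. Identifying $(P^{-j}(M)\to P^{-j+1}(M))$ with $\pru(\Omega^{j-1}(M))$ and $(P^{-1}(N)\to P^0(N))[1]$ with $\pru(N)[1]$, this datum is a morphism between two-term complexes, and Lemma \ref{AIR14, 3.4} applied to $\Omega^{j-1}(M)$ and $N$ translates $\Hom_\Lambda(N,\tau(\Omega^{j-1}(M)))=\Hom_\Lambda(N,\tau_j(M))=0$ into the existence of morphisms $s^{-j+1}\colon P^{-j+1}(M)\to P^0(N)$ and $s^{-j}\colon P^{-j}(M)\to P^{-1}(N)$ realising $f^{-j}$ as a coboundary (up to a sign coming from the shift). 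With $m=-n$ and the lemma's width parameter taken to be $n-j+1$, these two maps supply exactly the hypothesis of Lemma \ref{l1, NAIR, 3.4}(a); the required cohomology vanishing holds because $\prn(N)[j]$ has no cohomology in the interior range (the minimal resolution is exact in strictly interior negative degrees). One concludes $f^\bullet=0$ in $\homo$. The converse when $j=n$ is immediate: the supports of $\prn(M)$ and $\prn(N)[n]$ overlap only in degree $-n$, so $\HomK(\prn(M),\prn(N)[n])$ is naturally identified with the Hom-space between the two-term complexes $\pru(\Omega^{n-1}(M))$ and $\pru(N)[1]$, and Lemma \ref{AIR14, 3.4} gives the equivalence with $\Hom_\Lambda(N,\tau_n(M))=0$.

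For part (b), I would rely on the standard identification $\Ext^j_\Lambda(M,N)\simeq \Hom_{\Ksf(\proj(\Lambda))}(\pr(M),\pr(N)[j])$ together with Lemma \ref{lemtrunc}. For $(\Rightarrow)$, given any $f^\bullet\colon \pr(M)\to \pr(N)[j]$ the restriction $g^\bullet:=f^\bullet|_{\prn(M)}$ represents a class in $\Hom_{\Ksf(\proj(\Lambda))}(\prn(M),\pr(N)[j])\simeq \HomK(\prn(M),\prn(N)[j])$ by Lemma \ref{lemtrunc}, which vanishes by hypothesis; hence $g^\bullet$ is null-homotopic, and that homotopy supplies the top-degree data required to invoke Lemma \ref{l1, NAIR, 3.4}(a) on the full $f^\bullet$. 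The cohomology condition $H^k(\pr(N)[j])=H^{k+j}(\pr(N))=0$ for $k\leq -j-1$ holds because $\pr(N)$ has no negative cohomology, and the restriction $j<n$ separates the pivotal degree $-j$ from the propagation range. For $(\Leftarrow)$, starting with $f^\bullet\colon \prn(M)\to \prn(N)[j]$, Lemma \ref{lemtrunc} lets me view the target as $\pr(N)[j]$, and I iteratively apply Lemma \ref{l1, NAIR, 3.4}(b) to lift $f^\bullet$ to some $\overline{f}^\bullet\colon \pr(M)\to \pr(N)[j]$, using at each step that $H^k(\pr(N)[j])=0$ for $k<-j$. Since $\Ext^j_\Lambda(M,N)=0$, the extension $\overline{f}^\bullet$ is null-homotopic, and restricting the homotopy back produces $f^\bullet=0$ in $\homo$.

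The main subtlety I expect is that Lemma \ref{l1, NAIR, 3.4}(a) is stated for a source bounded below, whereas in the $(\Rightarrow)$ direction of part (b) the source $\pr(M)$ is unbounded below. This can be handled by applying the lemma to each truncation $\pr(M)_{\geq -n-k}$ (the resulting null-homotopies agree on the overlap because they are all constructed from the original homotopy of $g^\bullet$ at the top) and gluing, or equivalently by noting that the inductive step in the proof of Lemma \ref{l1, NAIR, 3.4}(a) relies only on projectivity of the source and cohomology vanishing of the target in the propagation range, both of which remain valid when the source extends indefinitely downward. A secondary point of care is the bookkeeping of the shifts, particularly in matching the two-term subquotients of $\prn(M)$ and $\prn(N)[j]$ with the 2-term complexes appearing in Lemma \ref{AIR14, 3.4}, but these are forced by the standard sign conventions.
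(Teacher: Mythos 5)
Your proposal follows essentially the same route as the paper: reduce the top component $f^{-j}$ to the two-term case via Lemma \ref{AIR14, 3.4} applied to $\Omega^{j-1}(M)$ and $N$, propagate the resulting null-homotopy downward with Lemma \ref{l1, NAIR, 3.4}, and handle part (b) through the truncation isomorphism of Lemma \ref{lemtrunc} together with extension/restriction between $\prn(M)$ and $\pr(M)$. Your remark about the source $\pr(M)$ being unbounded below in (b)($\Rightarrow$) identifies a point the paper itself glosses over when it invokes Lemma \ref{l1, NAIR, 3.4}(a) there, and your degree-by-degree fix is the correct way to close it.
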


\begin{teo}\label{NAIR, 3.4}Let $\Lambda$  be an Artin algebra, $n\geq 1$ and  $M,N\in\modu(\Lambda).$ Then $\prn(N)\in\prn(M)^{\perp_{>0}}$ in $\Ksf^b(\proj(\Lambda))$ if, and only if, $N\in M^{\perp_{\tau_n}}.$ In particular, $\prn(M)$ is presilting in $\homo$ if, and only if, $M\in M^{\perp_{\tau_n}}$.
\begin{proof} Note firstly that $\Hom_{\Ksf^b(\proj(\Lambda))}(\prn(M),\prn(N)[j])=0$ for $j>n.$ Then, by Proposition \ref{p1, NAIR, 3.4}, 
we get that $\prn(N)\in\prn(M)^{\perp_{>0}}$ in $\Ksf^b(\proj(\Lambda))$ if, and only if, 
$N\in {}^{\perp_0}\tau_n(M)\cap\cap_{i=1}^{n-1}\,M^{\perp_i}=M^{\perp_{\tau_n}}.$
\end{proof}\end{teo}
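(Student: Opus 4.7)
The proof is essentially a bookkeeping exercise that assembles Proposition~\ref{p1, NAIR, 3.4} with a degree argument. The plan is to analyze, for each $j\geq 1$, when $\HomK(\prn(M),\prn(N)[j])=0$ and then intersect the resulting conditions.

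First I would handle the easy range $j>n$ by a support-of-the-complex argument: the complex $\prn(M)$ is concentrated in degrees $[-n,0],$ while $\prn(N)[j]$ is concentrated in degrees $[-n-j,-j].$ For $j>n$ these two windows are disjoint, so every chain map $\prn(M)\to \prn(N)[j]$ is already zero in $\Csf(\proj(\Lambda)),$ hence zero in the homotopy category. Consequently the condition $\prn(N)\in \prn(M)^{\perp_{>0}}$ reduces to vanishing in the finite range $1\leq j\leq n.$

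Next I would peel off the case $j=n$ by invoking the second part of Proposition~\ref{p1, NAIR, 3.4}(a): it gives the equivalence
\[\HomK(\prn(M),\prn(N)[n])=0\;\Longleftrightarrow\;\Hom_\Lambda(N,\tau_n(M))=0,\]
i.e.\ $N\in{}^{\perp_0}\tau_n(M).$ For $1\leq j<n$ (which is only nonempty when $n\geq 2$) I would apply Proposition~\ref{p1, NAIR, 3.4}(b), which translates the vanishing of $\HomK(\prn(M),\prn(N)[j])$ into $\Ext^j_\Lambda(M,N)=0,$ i.e.\ $N\in M^{\perp_j}.$ Intersecting these $n$ conditions exactly reproduces the definition
\[M^{\perp_{\tau_n}}={}^{\perp_0}\tau_n(M)\cap\bigcap_{i=1}^{n-1}M^{\perp_i},\]
which establishes the first equivalence. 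The ``in particular'' clause follows at once by specializing $N=M$: $\prn(M)$ is presilting iff $\prn(M)\in\prn(M)^{\perp_{>0}},$ which by the first part of the theorem is equivalent to $M\in M^{\perp_{\tau_n}},$ i.e.\ $M$ is $\tau_n$-rigid.

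I do not foresee a real obstacle here, since Proposition~\ref{p1, NAIR, 3.4} has already absorbed the technical lemmas about truncations and homotopies. The only point that requires a touch of care is the degree argument for $j>n,$ and the slightly different statuses of the cases $j=n$ (handled by part (a), via the Auslander--Reiten translation) versus $j<n$ (handled by part (b), via Ext). Beyond that, the argument is a clean intersection of equivalent vanishing conditions.
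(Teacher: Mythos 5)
Your proposal is correct and follows exactly the paper's own argument: dispose of $j>n$ by the disjoint-support observation, then apply Proposition~\ref{p1, NAIR, 3.4}(a) for $j=n$ and (b) for $1\leq j<n$, and intersect. The extra care you take with the degree windows and the empty range when $n=1$ is just a fuller writing-out of the paper's terse two-line proof.
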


The following result shows in particular that the  $\tau_n$-rigid $\Lambda$-modules are a generalization of the partial $n$-tilting $\Lambda$-modules.

\begin{cor}\label{CoroNAIR, 3.4}  For  $M\in\modu(\Lambda)$ and $j\geq 1,$ the following statements hold true.
\begin{itemize}
\item[(a)] ${}^{\perp_0}(\tau_j(M))\subseteq M^{\perp_j}$ and thus $M^{\perp_{\tau_j}}\subseteq\cap_{i=1}^jM^{\perp_i}.$
\item[(b)] If $\pd(M)\leq j$ then $^{\perp_0}\tau_j(M)=M^{\perp_j}$ and $M^{\perp_{\tau_j}}=M^\perp.$
\item[(c)] $X\in {}^{\perp_j}\,\gen(M)\; \Leftrightarrow\; M\in {}^{\perp_0}\tau_j(X).$
\item[(d)] $M^{\perp_{\tau_j}}$ right thick.
\end{itemize}
\end{cor}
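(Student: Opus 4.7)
The whole corollary should reduce to two standard inputs: the Auslander--Smalø formula of Proposition \ref{AS81, 5.8} in the form $\Hom_\Lambda(N,\tau(L))=0\iff L\in{}^{\perp_1}\gen(N)$, and the dimension-shift isomorphism $\Ext^j_\Lambda(M,-)\simeq\Ext^1_\Lambda(\Omega^{j-1}(M),-)$ coming from the minimal projective resolution. Since $\tau_j(M)=\tau(\Omega^{j-1}(M))$ by definition, these two identities let one translate freely between vanishing of $\Hom(-,\tau_j(M))$ and vanishing of $\Ext^j_\Lambda(M,-)$ on $\gen$-classes.

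For part (a), take $N\in{}^{\perp_0}\tau_j(M)$, apply Proposition \ref{AS81, 5.8} with $L=\Omega^{j-1}(M)$ to obtain $\Omega^{j-1}(M)\in{}^{\perp_1}\gen(N)$, and specialize at $G=N$: the dimension shift gives $\Ext^j_\Lambda(M,N)\simeq\Ext^1_\Lambda(\Omega^{j-1}(M),N)=0$, hence $N\in M^{\perp_j}$. The second inclusion of (a) is then immediate from the definition of $M^{\perp_{\tau_j}}$. For part (b), under $\pd(M)\leq j$, only the reverse inclusion $M^{\perp_j}\subseteq{}^{\perp_0}\tau_j(M)$ needs work: given $N\in M^{\perp_j}$ and any $G\in\gen(N)$, choose a presentation $0\to K\to N^k\to G\to 0$ and look at the segment $\Ext^j_\Lambda(M,N^k)\to\Ext^j_\Lambda(M,G)\to\Ext^{j+1}_\Lambda(M,K)$. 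The left end vanishes by hypothesis, the right end because $\pd(M)\leq j$, so $\Ext^j_\Lambda(M,G)=0$, and unwinding the dimension shift and Proposition \ref{AS81, 5.8} yields $\Hom_\Lambda(N,\tau_j(M))=0$. Under $\pd(M)\leq j$ one has $M^{\perp_i}=\modu(\Lambda)$ for every $i>j$, giving $M^{\perp_{\tau_j}}=\cap_{i=1}^{j}M^{\perp_i}=M^\perp$.

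Part (c) is the same translation read in both directions: $M\in{}^{\perp_0}\tau_j(X)$ is equivalent (via Proposition \ref{AS81, 5.8}) to $\Omega^{j-1}(X)\in{}^{\perp_1}\gen(M)$, i.e.\ $\Ext^1_\Lambda(\Omega^{j-1}(X),G)=0$ for all $G\in\gen(M)$; the dimension shift converts this to $\Ext^j_\Lambda(X,G)=0$, which is precisely $X\in{}^{\perp_j}\gen(M)$.

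For part (d), closure under direct summands is clear, and for a short exact sequence $0\to A\to B\to C\to 0$ closure under extensions follows by applying $\Hom_\Lambda(-,\tau_j(M))$ and $\Hom_\Lambda(M,-)$ and reading off the long exact sequences. The delicate case is closure under mono-cokernels with $A,B\in M^{\perp_{\tau_j}}$: left exactness of $\Hom_\Lambda(-,\tau_j(M))$ forces $\Hom_\Lambda(C,\tau_j(M))=0$, and for $1\le i\le j-2$ the segment $\Ext^i_\Lambda(M,B)\to\Ext^i_\Lambda(M,C)\to\Ext^{i+1}_\Lambda(M,A)$ kills $\Ext^i_\Lambda(M,C)$. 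The main (mild) obstacle is the top case $i=j-1$: here the long exact sequence yields $\Ext^{j-1}_\Lambda(M,B)\to\Ext^{j-1}_\Lambda(M,C)\to\Ext^j_\Lambda(M,A)$, and the right term is not controlled by the definition of $M^{\perp_{\tau_j}}$ alone. This is exactly where part (a) is used: $A\in{}^{\perp_0}\tau_j(M)\subseteq M^{\perp_j}$ gives $\Ext^j_\Lambda(M,A)=0$, closing the argument. So the logical dependence (d) uses (a), while (b) and (c) are independent corollaries of the same translation.
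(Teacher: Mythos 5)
Your proof is correct; the difference from the paper is concentrated in parts (a) and (b). For (a) the paper simply invokes Proposition \ref{p1, NAIR, 3.4} with $n:=j+1$, i.e.\ it routes the statement through the homotopy category: $\Hom_\Lambda(N,\tau_j(M))=0$ forces the vanishing of $\HomK(P^\bullet_{\geq -j-1}(M),P^\bullet_{\geq -j-1}(N)[j])$, which by part (b) of that proposition is equivalent to $\Ext^j_\Lambda(M,N)=0$. You instead stay entirely in $\modu(\Lambda)$, combining Proposition \ref{AS81, 5.8} (applied to the pair $(N,\Omega^{j-1}(M))$) with the dimension-shift $\Ext^j_\Lambda(M,-)\simeq\Ext^1_\Lambda(\Omega^{j-1}(M),-)$ and evaluating at $N\in\gen(N)$; this is more elementary and independent of the complex-theoretic machinery of Section 3, at the cost of not reusing it. For (b) the paper gets the equality ${}^{\perp_0}\tau_j(M)=M^{\perp_j}$ in one stroke from the Auslander--Reiten duality $\Ext^1_\Lambda(X,-)\simeq D\Hom_\Lambda(-,\tau(X))$ for $\pd(X)\leq 1$ (applied to $X=\Omega^{j-1}(M)$), whereas you prove the nontrivial inclusion by hand via a $\gen(N)$-presentation $0\to K\to N^k\to G\to 0$ and the vanishing of $\Ext^{j+1}_\Lambda(M,-)$; both are valid, the paper's being shorter and yours avoiding the duality formula. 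Your treatment of (c) coincides with the paper's, and your (d) is exactly the argument the paper sketches (it only says that closure under mono-cokernels ``uses the inclusion from (a)''), with the useful extra observation that the inclusion $M^{\perp_{\tau_j}}\subseteq M^{\perp_j}$ is needed precisely to control the term $\Ext^j_\Lambda(M,A)$ in the top degree $i=j-1$ of the long exact sequence.
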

\begin{proof} (a) It follows from Proposition \ref{p1, NAIR, 3.4} by considering $n:=j+1.$ 
\

(b) Note that $\pd(\Omega^{n-1}(M))\leq 1$ since $\pd(M)\leq n.$ Then, for any $N\in\modu(\Lambda),$ by  
\cite[Corollary IV.2.14]{ASS06} we have 
$$\Ext^n_\Lambda(M,N)\simeq \Ext^1_\Lambda(\Omega^{n-1}(M),N)\simeq D\Hom_\Lambda(N,\tau_n(M))$$
and thus $^{\perp_0}\tau_n(M)=M^{\perp_n}.$ On the other hand, $\pd(M)\leq n$ implies that $M^\perp=\cap_{i=1}^n\,M^{\perp_i}.$ Hence $M^{\perp_{\tau_n}}=M^\perp$ since $^{\perp_0}\tau_n(M)=M^{\perp_n}.$
\

(c) It follows from Proposition \ref{AS81, 5.8} using that $\tau_j(X)=\tau(\Omega^{j-1}M).$
\

(d) It is clear that $M^{\perp_{\tau_j}}$ is closed under extensions and direct summands. In order to show that it is also closed under mono-cokernels, we use the inclusion $M^{\perp_{\tau_j}}\subseteq\cap_{i=1}^jM^{\perp_i}$ from (a).
\end{proof}

The next example shows a $\tau_n$-rigid $\Lambda$-module of infinite projective dimension.

\begin{ex} Let $\Lambda$ be an Artin algebra and $S$ be a simple-injective $\Lambda$-module of infinite projective dimension. Then $S$ is $\tau_n$-rigid for any $n\in\mathbb{N}^+.$ Indeed, since $S$ is injective, it is enough to show that $\Hom_\Lambda(S,\tau_n(S))=0.$ Suppose there is a non-zero morphism $f:S\to\tau_n(S).$ Since $S$ is simple-injective and $f\neq 0,$ we have that the injective $S$ is a direct summand of $\tau_n(S)$ contradicting \cite[ IV.1 Proposition 1.10 (c)]{ARS95}.
\end{ex}

In \cite[Theorem 2.10]{AIR14}, it was shown that any $\tau$-rigid $\Lambda$-module  $M$ admits a $\tau$-rigid $\Lambda$-module $N$ such that $M\oplus N$ is $\tau$-rigid and $rk(M)+rk(N)=rk(\Lambda).$ The following example shows that this is not necessarily true for $\tau_n$-rigid $\Lambda$-modules, with $n\geq 2.$

\begin{ex}\label{Ejp1}\cite[Example 2]{RS89} Let $k$ be a field and $\Lambda$ be the quotient path $k$-algebra $kQ/I,$ where $Q$ is the following quiver and $I:=\langle\beta\alpha,\alpha\delta,\delta\gamma\rangle$ 
\begin{center}
$\xymatrix{& 2\ar@<1ex>[d]^\gamma \ar@<-1ex>[d]_\beta\\
1\ar[ru]^\alpha & 3.\ar[l]^\delta}$
$\qquad$ 
\end{center}
Note that the simple $S(1)$ has projective dimension 2 and $S(1)\in S(1)^\perp.$ Thus by Corollary \ref{CoroNAIR, 3.4} (b), we have that $S(1)$ is  $\tau_2$-rigid. Suppose there is some indecomposable $M\in\modu(\Lambda)$ not isomorphic to $S(1)$ such that $M\oplus S(1)$ is $\tau_2$-rigid. By Corollary \ref{CoroNAIR, 3.4} (a), we have that 
$(M\oplus S(1))^{\perp_{\tau_2}}\subseteq M^{\perp_1}\cap M^{\perp_2} \cap S(1)^{\perp_1}\cap S(1)^{\perp_2}.$ In particular, we get that $\Ext^2_\Lambda(M,S(1))=0,$ $\Ext^2_\Lambda(S(1),M)=0$ and $\Ext^1_\Lambda(M,M)=0.$ By the arguments given in \cite[Example 2]{RS89} we conclude that, for the representation 
$$\xymatrix{&M(2)\ar@<1ex>[d]^{M(\gamma)} \ar@<-1ex>[d]_{M(\beta)}\\
M(1)\ar[ru]^{M(\alpha)} & M(3)\ar[l]^{M(\delta)},}$$
$M(\beta)$ is injective and $M(\gamma)$ is surjective. Since $M(\beta)M(\alpha)=0$ and $M(\delta)M(\gamma)=0,$ we 
have that $M(\alpha)=0$ and $M(\delta)=0.$ Thus, by using now that $M$ is indecomposable, we get that $M(1)=0$ and  $M(\beta)$ and  $M(\gamma)$ are isomorphisms. Hence by \cite[Theorem VIII.7.5]{ARS95} it follows that $\Ext^1_\Lambda(M,M)\neq0$ which is a contradiction, proving that $M\oplus S(1)$ can not be $\tau_2$-rigid for any $M\in\modu(\Lambda)$ with $rk(M\oplus S(1))=3.$
\end{ex}

The following example shows an algebra $\Lambda$ having an infinite number of $\tau$-rigid indecomposable $\Lambda$-modules. However it has a finite amount of $\tau_3$-rigid indecomposable $\Lambda$-modules.

\begin{ex} Let $k$ be a field and let $\Lambda$ be the quotient path $k$-algebra, with  radical square zero, associated to the quiver 
$\xymatrix{\bullet^1 \ar@<1ex>[rr]^\alpha \ar@<-1ex>[rr]_\beta&& \bullet^2\ar[dl]^{\gamma}\\
&\bullet^3\ar[ul]^{\delta}}.$ 
\

 For any $p\in\mathbb{P}^1(k)$ and $n\in\mathbb{N},$ we consider the following indecomposable representations:
$$\xymatrix{Q_n:=k^n\ar@<1ex>[rrrr]^{\begin{pmatrix}1_{k^n}\\0\end{pmatrix}}\ar@<-1ex>[rrrr]_{\begin{pmatrix}0\\1_{k^n}\end{pmatrix}}&&&&k^{n+1}\ar[ddll]\\ \\
&&0\ar[uull]},$$
$$\xymatrix{J_n:=k^{n+1}\ar@<1ex>[rrrr]^{(1_{k^n},0)}\ar@<-1ex>[rrrr]_{(0,1_{k^n})}&&&&k^n\ar[ddll]\\ \\
&&0\ar[uull]},$$
$$\xymatrix{R_{p,n}:=k^n\ar@<1ex>[rrrr]^{1_{k^n}}\ar@<-1ex>[rrrr]_{J_{n,p}}&&&&k^n\ar[ddll]\\ \\
&&0\ar[uull]},$$
where $J_{n,p}$ is the Jordan block corresponding to the proper vector $p$ of size $n$. Then, by \cite[Theorem VIII.7.5]{ARS95}, the representative set $\mathrm{ind}(\Lambda)$ of indecomposable $\Lambda$-modules is
$\mathrm{ind}(\Lambda)=\{Q_n,\;J_n,\;R_{p,n+1},\;P(3), P(2), S(3)\;|\;n\in\mathbb{N},\;p\in\mathbb{P}^1(k)\}.$
Moreover, the Auslander-Reiten quiver is given by the following components
$$\tiny{\xymatrix{&&&P(3)\ar[ddr]\\
&I(2)\ar@<1ex>[dr]\ar@<-1ex>[dr]\\
\cdots\;J_2\ar@<1ex>[ur]\ar@<-1ex>[ur]&&S(1)\ar[uur]&&S(3)\ar[ddr]&&S(2)\ar@<1ex>[dr]\ar@<-1ex>[dr]&&Q_2\cdots\\ 
&&&&&&&P(1)\ar@<1ex>[ur]\ar@<-1ex>[ur]\\
&&&&&P(2)\ar[uur]}}$$
$$\xymatrix{R_{p,0}\ar@<1ex>[r]&R_{p,1}\ar@<1ex>[r]\ar@<1ex>[l]&R_{p,2}\ar@<1ex>[l]\ar@<1ex>[r] &\ar@<1ex>[l]\;\cdots&&\forall\;p\in\mathbb{P}^1(k).}$$
By  \cite[Lemma 2.1]{AIR14} and \cite[Theorem VIII.7.5]{ARS95}, we have that $S(2)$ is $\tau$-rigid and $S(2)\oplus X$ is $\tau$-rigid if, and only if, $X\in\{P(2), P(1), P(2)\oplus P(1)\}$. Then, $P(1)$ is $\tau$-rigid and $P(1)\oplus X$ is $\tau$-rigid if, and only if, 
\begin{center}
$X\in\{P(2),P(3),S(2),Q_2,P(2)\oplus P(3), P(2)\oplus S(2),P(3)\oplus Q_2\}.$
\end{center}
On the other hand,  $Q_{n+2}$ is $\tau$-rigid and $Q_{n+2}\oplus X$ is $\tau$-rigid if, and only if, 
\begin{center}
$X\in\{P(3),Q_{n+1},Q_{n+3},P(3)\oplus Q_{n+1},P(3)\oplus Q_{n+3}\}\quad\forall\;n\in\mathbb{N}.$
\end{center}
Furthermore, $S(1)$ is $\tau$-rigid and $S(1)\oplus X$ is $\tau$-rigid if, and only if, 
\begin{center}
$X\in\{I(2),P(3),I(2)\oplus P(3)\}.$
\end{center}
Finally,  $J_{n+1}$ is $\tau$-rigid and $J_{n+1}\oplus X$ is $\tau$-rigid if, and only if, 
\begin{center}
$X\in\{J_{n+2},J_n,P(3),J_{n+2}\oplus P(3),J_n\oplus P(3)\}\quad\forall\;n\in\mathbb{N}.$
\end{center}

Therefore, the support $\tau$-tilting quiver of $\Lambda$ is infinite and has the following shape
$$\begin{turn}{90}\scriptsize{\xymatrix{&&&(P(1)\oplus Q_2,P(3))\ar[dl]\ar[d]&(Q_3\oplus Q_2,P(3))\ar[l]\ar[d]&\ar[l]\cdots\\
&(S(2), P(1)\oplus P(3))\ar[r]\ar[rd]&(S(2)\oplus P(1), P(3))&(P(1)\oplus P(3)\oplus Q_2, 0)\ar[rd]&(Q_3\oplus Q_2, P(3))\cdots\ar[l]&(Q_3\oplus Q_2\oplus P(3), 0)\ar[l]&\ar[l]\cdots\\
&&(S(2)\oplus P(2), P(1))\ar[r]\ar[d]&(S(2)\oplus P(2)\oplus P(1), 0)\ar[r]&(\Lambda, 0)\\
(0, \Lambda)\ar[ruu]\ar[r]\ar[rdd]&(S(3), P(1)\oplus P(2))\ar[r]\ar[dr]&(S(3)\oplus P(2), P(1))\ar[r]&(S(3)\oplus P(2)\oplus P(3), 0)\ar[ru]\\
&&(P(3)\oplus S(3), P(2))\ar[d]\ar[ru]\\
&(S(1), P(2)\oplus P(3))\ar[r]\ar[rd]&(S(1)\oplus P(3), P(2))\ar[r]&(S(1)\oplus P(3)\oplus J_1, 0)\ar[r]&(P(3)\oplus J_1\oplus J_2, 0)\ar[r]&\cdots\\
&&(S(1)\oplus J_1, P(3))\ar[ru]\ar[r]&(J_1\oplus J_2, P(3))\ar[r]\ar[ru]&\cdots.}}\end{turn}$$
We assert that $P(1), P(2)$ and $ P(3)$ are the unique $\tau_3$-rigid indecomposable $\Lambda$-modules. Let $n\in\mathbb{N}$ and $p\in\mathbb{P}^1(k)$. Consider the following exact sequences
$$0\to S(3)^{n+1}\to P(2)^{n+1}\to P(1)^{n+2}\to Q_{n+2}\to0,$$
$$0\to S(3)^{n+2}\to P(2)^{n+2}\to P(1)^n\to R_{p,n}\to0,$$
$$0\to S(3)^{n+2}\to P(2)^{n+2}\to P(1)^{n+1}\to J_n\to0,$$
$$0\to S(1)\to P(3)\to P(2)\to S(2)\to0,$$
$$0\to S(2)^2\to P(1)\to P(3)\to S(3)\to0.$$
Therefore $\Hom_\Lambda(Q_{n+2},\tau_3(Q_{n+2}))=\Hom_\Lambda(Q_{n+2}, S(1)^{n+1})\neq0$, 
$$\Hom_\Lambda(R_{p,n},\tau_3(R_{p,n}))=\Hom_\Lambda(R_{p,n},S(1)^{n+2})\neq0,$$
$$\Hom_\Lambda(J_n,\tau_3(J_n))=\Hom_\Lambda(J_n,S(1)^{n+2})\neq0,$$ 
$$\Hom_\Lambda(S(2),\tau_3(S(2)))=\Hom_\Lambda(S(2),J_2)\neq0,$$
and $\Hom_\Lambda(S(3),\tau_3(S(3)))=\Hom_\Lambda(S(3),S(3)^2)\neq0$. Thus, our assertion follows.
\end{ex}

The following corollary is an extension of \cite[Proposition 2.5]{AIR14} and it is a nice criterion to detect if one $\Lambda$-module is not $\tau_n$-rigid. In order to prove this result, we recall that for any $X,Y\in\modu(\Lambda),$ we have that $\add(X)\cap\add(Y)=0$ if and only if 
$\Hom_\Lambda(X,Y)=\rad_\Lambda(X,Y).$ 

\begin{cor}\label{NAIR, 2.5} If $M\in\modu(\Lambda)$ is $\tau_n$-rigid, then $\add(P^0(M))\cap\add(P^{-n}(M))=0.$ 
\end{cor}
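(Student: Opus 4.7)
\medskip

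\noindent\textbf{Proof plan.} I would argue by contradiction, exploiting the characterization given by Theorem \ref{NAIR, 3.4}: $M$ is $\tau_n$-rigid if and only if $\prn(M)$ is presilting in $\homo$. Concretely, assume $\add(P^0(M))\cap\add(P^{-n}(M))\neq 0$, so there exist an indecomposable projective $P$, a split epimorphism $\pi:P^{-n}(M)\to P$, and a split monomorphism $\iota:P\to P^0(M)$. The goal is to manufacture a non-null-homotopic chain map from $\prn(M)$ to $\prn(M)[n]$, contradicting presilting.

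Define $f^\bullet:\prn(M)\to\prn(M)[n]$ by $f^{-n}:=\iota\pi:P^{-n}(M)\to P^0(M)$ (which sits in degree $-n$ of the shifted complex) and $f^{k}:=0$ for $k\neq -n$. This is automatically a chain map, because $\prn(M)[n]$ vanishes in degrees $>-n$ (so $d\circ f^{-n}=0$ trivially) and $\prn(M)$ vanishes in degrees $<-n$ (so $f^{-n}\circ d=0$ trivially). By Theorem \ref{NAIR, 3.4}, $\tau_n$-rigidity forces $f^\bullet=0$ in $\homo$, so there must exist a null-homotopy, which after unwinding the definitions yields morphisms $a:P^{-n}(M)\to P^{-1}(M)$ and $b:P^{-n+1}(M)\to P^0(M)$ with
\[
\iota\pi \;=\; d^{-1}_{P^\bullet(M)}\circ a \;+\; b\circ d^{-n}_{P^\bullet(M)}.
\]

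The main (and only delicate) step is to contradict this equality using the minimality of $P^\bullet(M)$. Let $\pi':P^0(M)\to P$ be the projection associated to the decomposition $P^0(M)=P\oplus P'$ coming from $\iota$; then $\pi'\iota\pi=\pi$. On the right-hand side, minimality gives $\Ima(d^{-1}_{P^\bullet(M)})\subseteq\rad(P^0(M))$ and $\Ima(d^{-n}_{P^\bullet(M)})\subseteq\rad(P^{-n+1}(M))$; since every $\Lambda$-morphism sends radical into radical (as $\rad(N)=J\cdot N$ for the Jacobson radical $J$ of $\Lambda$), both $d^{-1}_{P^\bullet(M)}\circ a$ and $b\circ d^{-n}_{P^\bullet(M)}$ land inside $\rad(P^0(M))$, and $\pi'$ carries this into $\rad(P)$. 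Composing the displayed identity with $\pi'$ therefore yields $\pi:P^{-n}(M)\to P$ factoring through $\rad(P)$; but $\pi$ is a split epimorphism onto the non-zero module $P$, so $\Ima(\pi)=P\not\subseteq\rad(P)$, a contradiction. Hence $\add(P^0(M))\cap\add(P^{-n}(M))=0$.

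The expected obstacle is purely bookkeeping: producing the null-homotopy data from $f^\bullet=0$ in $\homo$ and then exploiting minimality to trap the split projector $\pi$ inside the radical; the construction of $f^\bullet$ itself is essentially free because the truncations force all other components to vanish.
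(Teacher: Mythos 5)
Your proposal is correct and follows essentially the same route as the paper: extend a degree-$(-n)$ map $P^{-n}(M)\to P^{0}(M)$ by zero to a chain map $\prn(M)\to\prn(M)[n]$, invoke Theorem \ref{NAIR, 3.4} to obtain a null-homotopy, and use minimality of the projective resolution to force the map into the radical. The paper phrases this directly (showing every morphism $P^{-n}(M)\to P^{0}(M)$ lies in $\rad_\Lambda(P^{-n}(M),P^{0}(M))$, which is equivalent to the vanishing of the common summand) rather than by contradiction, but the mechanism is identical.
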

\begin{proof} Let $M\in\modu(\Lambda)$ be $\tau_n$-rigid. We need to show that $\Hom_\Lambda( P^{-n}(M), P^0(M))$ is equal to $\rad_\Lambda( P^{-n}(M), P^0(M)).$ 
\

Let $f: P^{-n}(M)\to P^0(M)$ be a morphism of $\Lambda$-modules. Then, we can extend it to a morphism of complexes $f^\bullet:\prn(M)\to \prn(M)[n]$ and by Theorem \ref{NAIR, 3.4} we have that $f^\bullet=0$ in $\mathsf{K}^b(\mathrm{proj}(\Lambda)).$ In particular, we get the 
following diagram 
$$\xymatrix{&P^{-n}(M)\ar[d]^f\ar@{-->}[dl]_{\exists\;g}\ar[r]^{\pi^{-n}_M}&P^{1-n}(M)\ar@{-->}[dl]^{\exists\;g'}\\
P^{-1}(M)\ar[r]_{\pi^{-1}_M}&P^0(M)}$$
in $\modu(\Lambda)$, where $f=g'\pi^{-n}_M+\pi^{-1}_M g.$ Since $\pi^{-n}_M\in\mathrm{rad}_\Lambda(P^{-n}(M), P^{1-n}(M))$ and $\pi^{-1}_M\in\mathrm{rad}_\Lambda(P^{-1}(M),P^0(M)),$ we conclude  that $f\in\mathrm{rad}_\Lambda(P^{-n}(M),P^0(M)).$
\end{proof}

\section{$\tau_n$-perpendicular category}

Let $\Lambda$ be an Artin algebra. In this section we study some basic properties of the $\tau_n$-perpendicular category 
$M^{\perp_{\tau_n}},$ for $M\in\modu(\Lambda).$

For the next lemma, we recall that $\mathsf{K}^{b,\leq0}(\proj(\Lambda))$ is the full subcategory of $\mathsf{K}^{b}(\proj(\Lambda))$ whose objects $X^\bullet$ satisfies that $H^k(X^\bullet)=0$ $\forall\,k>0.$

\begin{lem}\label{l6. NAIR, 2.2} For $n,\ell\in\mathbb{N}^+$ and a family of distinguished triangles in $\homo$
$$\{\eta_i: \pr_i\xrightarrow{f_i^\bullet}\prn(M_i)\xrightarrow{g_i^\bullet}\pr_{i+1}\to\pr_i[1]\}_{i=0}^{\ell-1},$$ 
where $\pr_\ell:=\prn(M_\ell)$ and $\pr_0\in\mathsf{K}^{b,\leq0}(\proj(\Lambda)),$  the following statements hold true.
\begin{itemize}
\item[(a)] $H^k(P^\bullet_{\ell-i})=0$ $\quad\forall\,k\in(-n+i,0)$ and $i\in[0,\ell].$
\item[(b)] $P^\bullet_r\in \mathsf{K}^{b,\leq0}(\proj(\Lambda))$ $\quad\forall\,r\in [0,\ell].$
\item[(c)] For $\ell\leq n,$ there exists the following family of exact sequences in $\modu(\Lambda):$
$$H^0(P^\bullet_0)\xrightarrow{H^0(f^\bullet_0)}M_0\xrightarrow{H^0(g^\bullet_0)}H^0(P^\bullet_1)\to 0,$$
$$0\to H^0(\pr_r)\xrightarrow{H^0(f^\bullet_r)} M_r\xrightarrow{H^0(g^\bullet_r)} H^0(\pr_{r+1})\to0\quad \forall\,r\in[1,\ell-2],$$
$$0\to H^0(P^\bullet_{\ell-1})\xrightarrow{H^0(f^\bullet_{\ell-1})}M_{\ell-1}\xrightarrow{H^0(g^\bullet_{\ell-1})}M_\ell\to 0.$$
Moreover $H^0(f^\bullet_0)$ is a monomorphism if $\ell<n.$
\end{itemize}
\end{lem}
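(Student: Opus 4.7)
All three parts rest on the cohomology long exact sequence attached to each distinguished triangle $\eta_i$, combined with the key observation that $P^\bullet_{\geq -n}(M_i)$ is a truncation of a minimal projective resolution of $M_i$. In particular
\[
H^0(P^\bullet_{\geq -n}(M_i)) \cong M_i \quad\text{and}\quad H^k(P^\bullet_{\geq -n}(M_i))=0 \text{ for } k\in(-n,0)\cup(0,+\infty).
\]

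For (a), I would induct on $i\in[0,\ell]$. The base case $i=0$ is immediate from the displayed observation, since $P^\bullet_\ell=P^\bullet_{\geq -n}(M_\ell)$. For the inductive step, the long exact sequence of $\eta_{\ell-i-1}$ places $H^k(P^\bullet_{\ell-i-1})$ between $H^{k-1}(P^\bullet_{\ell-i})$ and $H^k(P^\bullet_{\geq -n}(M_{\ell-i-1}))$; both groups vanish for $k\in(-n+i+1,0)$, the former by the inductive hypothesis (as $k-1\in(-n+i,-1)$) and the latter by the observation. Part (b) is a parallel but reversed induction on $r\in[0,\ell]$: the base $r=0$ is the hypothesis, and for $k>0$ the long exact sequence of $\eta_r$ sandwiches $H^k(P^\bullet_{r+1})$ between $H^k(P^\bullet_{\geq -n}(M_r))$ and $H^{k+1}(P^\bullet_r)$, which vanish by the observation and the inductive hypothesis respectively.

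For (c), I would read off the long exact sequence of $\eta_r$ around degree zero, using the identification $H^0(P^\bullet_{\geq -n}(M_r))=M_r$:
\[
H^{-1}(P^\bullet_{r+1}) \to H^0(P^\bullet_r) \xrightarrow{H^0(f^\bullet_r)} M_r \xrightarrow{H^0(g^\bullet_r)} H^0(P^\bullet_{r+1}) \to H^1(P^\bullet_r).
\]
The rightmost group vanishes by (b), giving surjectivity of $H^0(g^\bullet_r)$ at every index, so each of the three displayed sequences terminates at zero. For the injectivity of $H^0(f^\bullet_r)$, it suffices that $H^{-1}(P^\bullet_{r+1})=0$; by (a) this is guaranteed exactly when $r>\ell-n$. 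Under the hypothesis $\ell\leq n$ this holds automatically for all $r\geq 1$, covering both the middle sequences and the final one (where one additionally uses $H^0(P^\bullet_\ell)=M_\ell$), whereas at $r=0$ it is equivalent to the strict inequality $\ell<n$, yielding the \emph{moreover} clause.

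The main obstacle is not conceptual but rather careful index bookkeeping: one must run the inductions in (a) and (b) in the correct (opposite) directions and verify that the numerical threshold $r>\ell-n$ produced by (a) aligns precisely with the statement's conditions $\ell\leq n$ and $\ell<n$ that distinguish the three types of sequences in (c).
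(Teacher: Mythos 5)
Your proof is correct and follows exactly the route the paper takes: the paper's own proof is a single sentence asserting that (a) and (b) follow by induction using the cohomological functor $H^0$ and that (c) is a consequence of (a), (b) and the same long exact sequences, so your write-up is a faithful (and considerably more detailed) elaboration of it, with the threshold $r>\ell-n$ correctly extracted from (a). The only caveat is the degenerate case $\ell=1$, where the ``first'' and ``last'' displayed sequences of (c) coincide at $r=0$ and your criterion then demands $\ell<n$ rather than $\ell\leq n$ for left-exactness; this is an infelicity of the lemma's statement itself that the paper's proof does not address either.
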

\begin{proof} The items (a) and (b) follow by induction using the cohomological functor $H^0:\mathsf{K}^{b}(\proj(\Lambda))\to \modu(\Lambda).$ Finally, the item (c) is a consequence of (a), (b) and the aforementioned  cohomological functor.
\end{proof}

Now, we are ready to state and prove the following variant, for $\tau_n$-tilting modules, of the  Bazzoni's characterization for tilting modules (see Theorem \ref{Ba04, 3.11}).

\begin{teo}\label{p4. NAIR, 2.2} If $M\in\modu(\Lambda)$ is $\tau_n$-tilting, then $M^{\perp_{\tau_n}}=\gen_n(M).$
\end{teo}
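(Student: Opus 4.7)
I would prove the two inclusions $\gen_n(M)\subseteq M^{\perp_{\tau_n}}$ and $M^{\perp_{\tau_n}}\subseteq \gen_n(M)$ separately, using Theorem \ref{NAIR, 3.4} as the bridge between membership in $M^{\perp_{\tau_n}}$ inside $\modu(\Lambda)$ and the orthogonality $\prn(X)\in\prn(M)^{\perp_{>0}}$ inside $\homo.$ The first inclusion will be handled with $\tau_n$-rigidity of $M$ alone, while the second genuinely requires the silting assumption.

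For $\gen_n(M)\subseteq M^{\perp_{\tau_n}}$ I would start with $X\in\gen_n(M)$ and an exact sequence $M_n\to\cdots\to M_1\to X\to 0,$ decomposing it into short exact sequences $0\to Z_i\to M_i\to Z_{i-1}\to 0$ (with $Z_0:=X$) for $i\in[1,n-1]$ together with the surjection $M_n\twoheadrightarrow Z_{n-1}$ whose kernel I call $K.$ Proposition \ref{Zi} turns each short exact sequence into a distinguished triangle of the minimal projective resolutions $\pr(-).$ Applying $\HomK(\prn(M),-[j])$ with $j\geq 1$ and combining Lemma \ref{lemtrunc} with Theorem \ref{NAIR, 3.4} (so that $\HomK(\prn(M),\pr(M_i)[k])=0$ for all $k\geq 1,$ from the $\tau_n$-rigidity of each $M_i\in\add(M)$), a routine dimension-shift through the triangles yields $\HomK(\prn(M),\pr(X)[j])\cong \HomK(\prn(M),\pr(K)[j+n]).$ The latter vanishes for $j\geq 1$ by a crude support argument: any chain map $\prn(M)\to Q^\bullet[k]$ with $Q^\bullet\in\mathsf{K}^{\leq 0}(\proj(\Lambda))$ and $k\geq n+1$ is component-wise zero, because for $i\in[-n,0]$ one has $i+k\geq 1,$ outside the support of $Q^\bullet.$ Invoking Lemma \ref{lemtrunc} once more gives $\HomK(\prn(M),\prn(X)[j])=0$ for $j\geq 1,$ and Theorem \ref{NAIR, 3.4} delivers $X\in M^{\perp_{\tau_n}}.$

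For $M^{\perp_{\tau_n}}\subseteq \gen_n(M)$ I would take $X\in M^{\perp_{\tau_n}},$ so that $\prn(X)\in\prn(M)^{\perp_{>0}}$ by Theorem \ref{NAIR, 3.4}. Setting $\pr^\bullet_0:=\prn(X),$ I would inductively take $\add(\prn(M))$-precovers $\prn(M_i)\to \pr^\bullet_i$ and complete them to distinguished triangles $\pr^\bullet_{i+1}\to\prn(M_i)\to\pr^\bullet_i\to\pr^\bullet_{i+1}[1].$ Standard silting arguments would establish existence of these precovers (via $\thick(\prn(M))=\homo$) and would show both $\pr^\bullet_i\in\mathsf{K}^{b,\leq 0}(\proj(\Lambda))$ and $\pr^\bullet_i\in \prn(M)^{\perp_{>0}}$ propagate along the construction. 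Once the process is known to terminate with $\pr^\bullet_n\in\add(\prn(M))$ (or $\pr^\bullet_n=0$), a reindexing of the triangle tower fits the hypotheses of Lemma \ref{l6. NAIR, 2.2}; applying the cohomology functor $H^0$ and splicing the short exact sequences produced by parts (a) and (c) of that lemma then yields an exact $M_n\to\cdots\to M_0\to X\to 0$ in $\modu(\Lambda),$ whence $X\in\gen_n(M).$

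The principal obstacle is the $n$-step termination of the precover iteration. This is a delicate interplay between the support bound on $\prn(X),$ concentrated in degrees $[-n,0]$ (cf.\ Lemma \ref{lNAIR, 2.2}), and the silting-generation property of $\prn(M);$ I would establish it by tracking the cohomologies $H^k(\pr^\bullet_i)$ along the construction and verifying that each round strictly ``pushes them upward'' until they land in the co-heart $\add(\prn(M)).$ Once this length bound is secured, the remaining homological bookkeeping reduces cleanly to Lemma \ref{l6. NAIR, 2.2} and Theorem \ref{NAIR, 3.4}.
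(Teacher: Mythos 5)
Your first inclusion, $\gen_n(M)\subseteq M^{\perp_{\tau_n}}$, is correct and is essentially the paper's own argument: the same decomposition into short exact sequences, the same triangles from Proposition \ref{Zi}, the same dimension shift using $\Hom_{\Ksf(\proj(\Lambda))}(\prn(M),P^\bullet(M_i)[k])=0$ for $k\geq 1$, and the same disjoint-support vanishing to kill the end term.

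The reverse inclusion is where your plan diverges from the paper and where it has a genuine gap --- located, moreover, not where you think it is. The step you wave through as ``standard silting arguments'', namely that $P^\bullet_i\in\Ksf^{b,\leq0}(\proj(\Lambda))$ propagates along the precover tower, is the actual crux. (The propagation of $P^\bullet_i\in\prn(M)^{\perp_{>0}}$ really is formal from the precover property, as you say.) But the long exact cohomology sequence of $P^\bullet_{i+1}\to\prn(M_i)\xrightarrow{g_i}P^\bullet_i\to P^\bullet_{i+1}[1]$ gives $H^1(P^\bullet_{i+1})\cong\Coker(H^0(g_i))$, so the $\Ksf^{b,\leq0}$ propagation is exactly the statement that each precover is surjective on $H^0$; at the very first step this already asks that $X\in\gen(M)$, which is part of the conclusion. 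It is not a formal consequence of being a precover. The way out is to first prove $\prn(M)^{\perp_{>0}}\subseteq\Ksf^{b,\leq0}(\proj(\Lambda))$, and that is precisely where the silting (not merely presilting) hypothesis must enter, via the coaisle description $\prn(M)^{\perp_{>0}}=\bigcup_{\ell\geq0}\mathcal{M}*\mathcal{M}[1]*\cdots*\mathcal{M}[\ell]$ of \cite[Proposition 2.23]{AI12}, or via $\coresdim_{\add(\prn(M))}(\Lambda[0])<\infty$ from \cite{We13}. The paper short-circuits all of this: it quotes that coaisle description directly, unpacks it into a tower of triangles by \cite[Lemma 3.8]{MSSS13}, gets the cohomological control from Lemma \ref{l6. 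NAIR, 2.2}(a),(b), and then reads off only the last $n$ triangles (or pads with zeros when the tower is short). By contrast, the step you single out as the ``principal obstacle'' --- $n$-step termination with $P^\bullet_n\in\add(\prn(M))$ --- is a red herring: your ``push the cohomologies upward into the co-heart'' heuristic is not an argument (note that $H^0(P^\bullet_i)$ stays in degree $0$ throughout), and, more to the point, termination is unnecessary. Lemma \ref{l6. NAIR, 2.2} only requires the bottom object of the tower to lie in $\Ksf^{b,\leq0}(\proj(\Lambda))$, and splicing the $H^0$'s of the first $n$ triangles already yields an exact sequence $M_{n-1}\to\cdots\to M_0\to X\to 0$, which is all that membership in $\gen_n(M)$ demands. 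So: drop the termination requirement, and supply an actual proof of the coaisle inclusion --- at which point you will have re-imported the same external input the paper uses, just deployed from the other end of the tower.
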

\begin{proof} Let $M\in\modu(\Lambda)$ be $\tau_n$-tilting. Let us show that $M^{\perp_{\tau_n}}=\gen_n(M).$
\

Let $X\in M^{\perp_{\tau_n}}.$ By Theorem \ref{NAIR, 3.4} it follows that $\prn(X)\in\prn(M)^{\perp_{>0}}.$ Then,  by \cite[Proposition 2.23 (a)]{AI12}, there is some $\ell\geq 0$ such that 
$$\prn(X)\in\mathcal{M}*\mathcal{M}[1]*\cdots*\mathcal{M}[\ell],$$
 where $\mathcal{M}:=\add(\prn(M))$ in 
$\mathsf{K}^{b}(\proj(\Lambda)).$ If $\ell=0,$ then  $\prn(X)\simeq\prn(M_0)$ for some $M_0\in\add(M),$ and thus 
$X=H^0(\prn(X))\simeq M_0\in \gen_n(M).$ Therefore, we can assume that $\ell\geq 1.$ Hence, by \cite[Lemma 3.8 (a)]{MSSS13}, there exists a 
family of distinguished triangles in $\mathsf{K}^{b}(\proj(\Lambda))$
$$\{\eta_i: \pr_i\to\prn(M_i)\to\pr_{i+1}\to\pr_i[1]\}_{i=0}^{\ell-1},$$  where 
$\pr_{\ell}:=\prn(X),$ $\pr_0=\prn(M'_0)$ and $\{M'_0,M_0,M_1,\cdots, M_{\ell-1}\}\subseteq\add(M).$ In particular, by Lemma \ref{l6. NAIR, 2.2} (b), we have that $\pr_i\in \mathsf{K}^{b,\leq 0}(\proj(\Lambda)),$ for all $i\in[0,\ell].$

Let us consider the following two cases:
\

Case I: Let $\ell\geq n.$ Then, by applying Lemma \ref{l6. NAIR, 2.2} (c) to the family 
$$\{\eta_i: \pr_i\to\prn(M_i)\to\pr_{i+1}\to P^\bullet_i[1]\}_{i={\ell-1-(n-1)}}^{\ell-1},$$ we get the exact sequence  
$M_{\ell-1-(n-1)}\to\cdots\to M_{\ell-1}\to X\to0$ in $\modu(\Lambda)$ and thus $X\in\gen_n(M).$

Case II: Let $\ell<n.$ Then, by Lemma \ref{l6. NAIR, 2.2} (c), there is a exact sequence $0\to M'_0\to M_0\to\cdots\to M_{\ell-1}\to X\to 0.$ Thus 
 $\resdim_{\add(M)}(X)\leq \ell<n$ and therefore  $X\in\gen_n(M)$ since $0\in\add(M).$ Hence we have shown that 
 $M^{\perp_{\tau_n}}\subseteq \gen_n(M).$

Let us prove that $\gen_n(M)\subseteq M^{\perp_{\tau_n}}.$  Indeed, let $Y\in\gen_n(M).$ Then, we have a family of exact sequences 
$\{0\to N_i\to M_i\to N_{i+1}\to0\}_{i=1}^n$ in $\modu(\Lambda),$  with $\{M_i\}_{i=1}^n\subseteq\add(M)$ and $N_{n+1}:=Y.$ Therefore, 
by Proposition \ref{Zi}, there is a family  
$\{\eta_i: \pr(N_i)\to\pr(M_i)\to\pr(N_{i+1})\to\pr(N_i)[1]\}_{i=1}^n$ 
of distinguished triangles in $\mathsf{K}(\mathrm{proj}(\Lambda)).$ Let 
$(X^\bullet,Y^\bullet):=\Hom_{\mathsf{K}(\proj(\Lambda))}(X^\bullet,Y^\bullet).$ For each $i\in[1,n],$ we apply the cohomological functor 
$(\prn(M),-)$ to each $\eta_i,$ and since $(\prn(M),\pr(M_i)[j])=0=(\prn(M),\pr(M_i)[j+1])$ (see Lemma \ref{lemtrunc} and Theorem \ref{NAIR, 3.4}),  we get the following isomorphism
$$(\prn(M),\pr(N_{i+1})[j])\simeq(\prn(M),\pr(N_i)[j+1])\quad\forall\,j\geq 1.$$
Therefore, by Lemma \ref{lemtrunc}, we get the  following isomorphism
$$(\prn(M),\prn(N_{i+1})[j])\simeq(\prn(M),\prn(N_i)[j+1])\quad\forall\,j\geq 1.$$
Then $(\prn(M),\prn(N_{n+1})[j])\simeq(\prn(M),\prn(N_1)[n+j])=0\;\forall\,j\geq 1,$ and thus from
Theorem \ref{NAIR, 3.4}, we conclude that $Y\in M^{\perp_{\tau_n}}.$
\end{proof}

\begin{pro}\label{l1, NHa95} Let $M\in\modu(\Lambda)$ be $\tau_n$-rigid and such that $M^{\perp_{\tau_n}}\subseteq\gen(M).$ Then $\add(M)$ is a relative generator in $M^{\perp_{\tau_n}}.$
\end{pro}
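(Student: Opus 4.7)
Abbreviate $(-,-):=\Hom_{\mathsf{K}(\proj(\Lambda))}(-,-)$. The strategy is to mimic the classical argument for tilting modules (cf.\ Theorem \ref{AR91, 5.5}(b)). Given $X\in M^{\perp_{\tau_n}}$, pick a right $\add(M)$-approximation $\phi:M_0\to X$, which exists because $\add(M)$ is functorially finite in the $\Hom$-finite category $\modu(\Lambda)$. The hypothesis $M^{\perp_{\tau_n}}\subseteq\gen(M)$ forces $\phi$ to be an epimorphism: any epi $M^r\twoheadrightarrow X$ factors through $\phi$. Letting $K:=\Ker(\phi)$, we obtain the short exact sequence $\eta:\,0\to K\to M_0\xrightarrow{\phi}X\to 0$. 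By the definition of relative generator, it suffices to prove $K\in M^{\perp_{\tau_n}}$, which by Theorem \ref{NAIR, 3.4} is equivalent to $(\prn(M),\prn(K)[j])=0$ for all $j\geq 1$.

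I would lift $\eta$ via Proposition \ref{Zi} to a distinguished triangle $\pr(K)\to \pr(M_0)\xrightarrow{\pr(\phi)} \pr(X)\to \pr(K)[1]$ in $\mathsf{K}(\proj(\Lambda))$, apply $(\prn(M),-)$, and use Lemma \ref{lemtrunc} to freely interchange $\prn(Z)[j]$ with $\pr(Z)[j]$ on the right argument whenever $j\geq 1$. For $j\geq 2$, the two flanking groups $(\prn(M),\pr(X)[j-1])$ and $(\prn(M),\pr(M_0)[j])$ both vanish by the $\tau_n$-rigidity of $M$ combined with $X, M_0\in M^{\perp_{\tau_n}}$ (via Theorem \ref{NAIR, 3.4} again), so $(\prn(M),\prn(K)[j])=0$ at once.

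The delicate case is $j=1$, where the long exact sequence identifies $(\prn(M),\prn(K)[1])$ with the cokernel of $(\prn(M),\pr(\phi))$. The main obstacle is to prove this cokernel vanishes. My plan is first to establish the natural isomorphism $(\prn(M),\pr(Y))\cong \Hom_\Lambda(M,Y)$ for every $Y\in\modu(\Lambda)$: this follows by applying $(-,\pr(Y))$ to the distinguished triangle $\prn(M)\to \pr(M)\to \pr(M)_{\leq -n-1}\to \prn(M)[1]$ coming from brutal truncation, and by noting that both end-terms vanish because $\pr(M)_{\leq -n-1}$ is quasi-isomorphic to $\Omega^{n+1}(M)[n+1]$ and contributes only through negative $\Ext$-groups (zero since $n\geq 1$). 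Under this identification, $(\prn(M),\pr(\phi))$ corresponds to $\Hom_\Lambda(M,\phi)$, which is surjective by the right $\add(M)$-approximation property of $\phi$. Hence $(\prn(M),\prn(K)[1])=0$, and we conclude that $K\in M^{\perp_{\tau_n}}$, completing the argument.
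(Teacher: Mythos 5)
Your proof is correct and follows essentially the same route as the paper: take an epimorphic $\add(M)$-precover of $X\in M^{\perp_{\tau_n}}$, lift the resulting short exact sequence to a triangle via Proposition \ref{Zi}, apply $(\prn(M),-)$, and kill the degree-$1$ obstruction by showing the induced map on $\Hom$ is surjective. The only (harmless) variation is that the paper checks this surjectivity by a direct lifting/comparison-lemma argument on a given $t^\bullet:\prn(M)\to \pr(X)$, whereas you route it through the natural isomorphism $(\prn(M),\pr(Y))\cong\Hom_\Lambda(M,Y)$; both are valid.
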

\begin{proof}  Let 
$X\in M^{\perp_{\tau_n}}\subseteq\gen(M).$ Then, there is an exact sequence 
\begin{center}
$0\to Y\to M'\overset{g}{\to}X\to0$ in $\modu(\Lambda),$ 
\end{center}
with 
$g: M'\to X$ an $\add(M)$-precover. We assert that $Y\in M^{\perp_{\tau_n}}.$ 
By Proposition \ref{Zi}, there exists a distinguished triangle 
\begin{center}
 $\eta:\pr(Y)\to\pr(M')\overset{g^\bullet}{\to}\pr(X)\to\pr(Y)[1]$
 \end{center}
 in $\mathsf{K}(\mathrm{proj}(\Lambda)),$ where $H^0(g^\bullet)=g.$ Let $(X^\bullet,Y^\bullet):=\mathrm{Hom}_{\mathsf{K}(\mathrm{proj}(\Lambda))}(X^\bullet,Y^\bullet).$ By applying the functor $(\prn(M),-)$ to $\eta$ and using Theorem \ref{NAIR, 3.4} and Lemma \ref{lemtrunc},  we get the following exact sequences $(i)$ and $(ii),$ $\forall$ $k\geq 1:$
$$\footnotesize{(i)\xymatrix{(\prn(M),\pr(M'))\xrightarrow{(\prn(M),g^\bullet)}(\prn(M),\pr(X))\to(\prn(M),\pr(Y)[1])\to0,}}$$
$$\footnotesize{(ii)\xymatrix{0=(\prn(M),\pr(X)[k])\to(\prn(M),\pr(Y)[k+1])\to(\prn(M),\pr(M')[k+1])=0.}}$$
Let us show that  $(\prn(M),g^\bullet)$ is an epimorphism. Consider $t^\bullet: \prn(M)\to\pr(X)$ in $\mathsf{K}(\mathrm{proj}(\Lambda)).$ 
 Then, there is a morphism $t:M\to X$ in $\modu(\Lambda),$ where $H^0(t^\bullet)=t;$ and using that $g: M'\to X$ is a $\add(M)$-precover, 
we get some $r:M\to M'$ such that $t=gr.$ By the comparison lemma in homological algebra, there is some $r^\bullet: \prn(M)\to \pr(M')$ in 
$\mathsf{K}(\mathrm{proj}(\Lambda))$ such that $g^\bullet r^\bullet  =t^\bullet.$ Thus,  $(\prn(M),g^\bullet)$ is an epimorphism, and then, 
by (i), we conclude that $(\prn(M),\pr(Y)[1])=0.$ Then, by using (ii), we get that $(\prn(M),\pr(Y)[k])=0$ $\forall\,k\geq 1.$ Finally, from 
Theorem \ref{NAIR, 3.4} and Lemma \ref{lemtrunc}, we conclude that $Y\in M^{\perp_{\tau_n}}.$ 
\end{proof}

\section{$\tau_n$-tilting and $n$-tilting modules}

The main objective of this section is to give some basic properties of the $\tau_n$-tilting modules in order to show equivalent conditions when these are $n$-tilting. In this section $\Lambda$ will denote an Artin algebra.

\begin{lem}\label{l5. NAIR, 2.2} Let $M\in\modu(\Lambda)$ be faithful and $\tau_n$-rigid. Then $\pd(M)\leq n$ and $M\in M^\perp$. 
\end{lem}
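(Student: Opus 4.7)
My plan is to exploit the faithfulness of $M$ to obtain a short exact sequence that embeds $\Lambda$ into copies of $M,$ and to combine this with the Ext-vanishings forced by $\tau_n$-rigidity via Corollary \ref{CoroNAIR, 3.4} and the presilting property of $\prn(M)$ from Theorem \ref{NAIR, 3.4}. First, since $M$ is faithful, there exist a positive integer $r$ and a monomorphism $\iota\colon\Lambda\hookrightarrow M^r$ in $\modu(\Lambda),$ obtained by choosing $r$ elements of $M$ whose common annihilator is zero. Setting $C:=\Coker(\iota)\in\gen(M),$ one obtains the short exact sequence
$$(\star)\qquad 0\to\Lambda\xrightarrow{\iota}M^r\to C\to 0.$$

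Second, $\tau_n$-rigidity of $M$ directly yields $\Ext^i_\Lambda(M,M)=0$ for $1\le i\le n-1,$ while Corollary \ref{CoroNAIR, 3.4}(c) (applied with $j=n$) translates the vanishing $\Hom_\Lambda(M,\tau_n M)=0$ into $M\in{}^{\perp_n}\gen(M);$ that is, $\Ext^n_\Lambda(M,Y)=0$ for every $Y\in\gen(M).$ Specialising to $Y\in\{M,M^r,C\}$ gives $\Ext^n_\Lambda(M,C)=0$ together with $\Ext^i_\Lambda(M,M^r)=0$ for all $1\le i\le n.$ Applying $\Hom_\Lambda(M,-)$ to $(\star)$ and combining with these vanishings produces isomorphisms $\Ext^i_\Lambda(M,C)\simeq\Ext^{i+1}_\Lambda(M,\Lambda)$ for $1\le i\le n-1$ and a monomorphism $\Ext^{n+1}_\Lambda(M,\Lambda)\hookrightarrow\Ext^{n+1}_\Lambda(M,M)^r.$

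The main obstacle is to upgrade this to $\pd_\Lambda M\le n,$ equivalently $\Omega^{n+1}(M)=0,$ equivalently $P^{-n-1}(M)=0.$ For this I plan to invoke the presilting property of $\prn(M)$ (Theorem \ref{NAIR, 3.4}) together with the distinguished triangle $\pr(\Lambda)\to\pr(M^r)\to\pr(C)\to\pr(\Lambda)[1]$ in $\mathsf{K}(\proj(\Lambda))$ obtained from $(\star)$ via Proposition \ref{Zi}. Applying $\HomK(\prn(M),-)$ to this triangle, and using that $\HomK(\prn(M),\Lambda[j])=0$ for purely degree reasons when $j>n,$ one should compare the resulting long exact sequence with the cohomology of $\prn(M)$ to force the vanishing of its lowest nonzero cohomology $\Omega^{n+1}(M).$ Once $\pd_\Lambda M\le n$ is established, $\Ext^j_\Lambda(M,M)=0$ for all $j\ge n+1$ is automatic, and combined with the Ext-vanishings of the previous paragraph we conclude $M\in M^\perp,$ completing the proof. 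The delicate interaction between the presilting condition, the faithful embedding $\Lambda\hookrightarrow M^r,$ and the structure of minimal projective resolutions is the technical heart of the argument.
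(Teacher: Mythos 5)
There is a genuine gap at the crucial step, and it traces back to your choice of how to use faithfulness. You encode faithfulness as a monomorphism $\Lambda\hookrightarrow M^r,$ which puts $\Lambda$ in the \emph{first} (contravariant) slot of the short exact sequence and therefore only ever produces information about the groups $\Ext^{i}_\Lambda(M,\Lambda).$ The concrete output of your long exact sequence is the embedding $\Ext^{n+1}_\Lambda(M,\Lambda)\hookrightarrow\Ext^{n+1}_\Lambda(M,M)^r,$ whose target is not known to vanish: its vanishing is essentially equivalent to $\pd(M)\leq n,$ which is what you are trying to prove, so the argument is circular at exactly the point where it matters. Moreover, even if you could show $\Ext^{n+1}_\Lambda(M,\Lambda)=0,$ that alone does not give $\pd(M)\leq n$ over an arbitrary Artin algebra. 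The proposed rescue via the presilting property of $\prn(M)$ does not help: by Theorem \ref{NAIR, 3.4}, presilting of $\prn(M)$ is \emph{equivalent} to the $\tau_n$-rigidity you have already used, so applying $\HomK(\prn(M),-)$ to the triangle coming from $(\star)$ only reproduces the same isomorphisms $\HomK(\prn(M),\pr(C)[j])\simeq\HomK(\prn(M),\Lambda[j+1]),$ and maps \emph{out of} $\prn(M)$ do not detect its lowest cohomology $H^{-n}(\prn(M))=\Omega^{n+1}(M).$ No step in your outline actually forces $P^{-n-1}(M)=0.$

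The missing idea is to use the dual characterization of faithfulness: $M$ is faithful if and only if the injective cogenerator $D_{\Lambda^{op}}(\Lambda_\Lambda)$ lies in $\gen(M),$ i.e.\ there is an epimorphism $M'\twoheadrightarrow D_{\Lambda^{op}}(\Lambda_\Lambda)$ with $M'\in\add(M).$ Applying the left exact functor $\Hom_\Lambda(-,\tau_n(M))$ to this epimorphism and using $\Hom_\Lambda(M',\tau_n(M))=0$ (which is part of $\tau_n$-rigidity) yields $\Hom_\Lambda(D_{\Lambda^{op}}(\Lambda_\Lambda),\tau_n(M))=0,$ and the classical criterion \cite[Lemma IV.2.7]{ASS06} then gives $\pd(\Omega^{n-1}(M))\leq 1,$ i.e.\ $\pd(M)\leq n.$ This is the route the paper takes; the epimorphism onto the injective cogenerator is what connects $\tau$-vanishing to projective dimension, and your monomorphism from $\Lambda$ points in the wrong (injective-dimension) direction. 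Your final paragraph deducing $M\in M^\perp$ from $\pd(M)\leq n$ together with the Ext-vanishings in degrees $1,\dots,n$ is correct, and coincides with the paper's appeal to Corollary \ref{CoroNAIR, 3.4}(b).
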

\begin{proof}   Since $M$ is faithful, there is an exact sequence $0\to K\to M'\to\mathrm{D_{\Lambda^{op}}}(\Lambda_\Lambda)\to0$ in $\mathrm{mod}(\Lambda)$, with $M'\in\mathrm{add}(M)$. Applying the functor $\Hom_\Lambda(-,\tau_n(M))$ to the previous exact sequence, we have the exact sequence
\begin{center}
$0\to\Hom_\Lambda(\mathrm{D_{\Lambda^{op}}}(\Lambda_\Lambda),\tau_n(M))\to\Hom_\Lambda(M',\tau_n(M))=0.$
\end{center}
Thus, by \cite[Lemma IV.2.7]{ASS06}, it follows that $\pd(M)\leq n.$ Therefore, by Corollary \ref{CoroNAIR, 3.4}, we conclude that $M\in M^\perp.$ \end{proof}

\begin{lem}\label{LemHsim} Let $n\in\mathbb{Z},$ $P\in\proj(\Lambda)$ and $X^\bullet\in\Ksf(\modu(\Lambda)).$ Then 
\begin{center}
$\Hom_{\Ksf(\modu(\Lambda))}(P[0],X^\bullet[n])\simeq\Hom_\Lambda(P,H^n(X^\bullet)). $
\end{center}
\end{lem}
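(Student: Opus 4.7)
The plan is to identify a chain map $f^\bullet\colon P[0]\to X^\bullet[n]$ in $\Csf(\modu(\Lambda))$ with its only possibly nonzero component $f^0\colon P\to X^n$, and to observe that the cochain-map condition $d^n_{X^\bullet}\circ f^0=0$ forces $f^0$ to factor through $Z_n(X^\bullet)$. Composing this factorization with the canonical projection $Z_n(X^\bullet)\twoheadrightarrow H^n(X^\bullet)$ yields a natural $R$-linear map
\[
\Phi\colon \Hom_{\Csf(\modu(\Lambda))}(P[0],X^\bullet[n])\;\longrightarrow\;\Hom_\Lambda(P,H^n(X^\bullet)).
\]

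Next I would check that $\Phi$ descends to homotopy classes. A homotopy between $f^\bullet$ and $g^\bullet$ from $P[0]$ to $X^\bullet[n]$ has a single possibly nonzero component $s^0\colon P\to X^{n-1}$ satisfying $f^0-g^0=d^{n-1}_{X^\bullet}\circ s^0$; hence $f^0-g^0$ takes values in $B_n(X^\bullet)$, and the two induced maps into $H^n(X^\bullet)$ coincide. Thus $\Phi$ induces a well-defined map $\overline{\Phi}\colon \Hom_{\Ksf(\modu(\Lambda))}(P[0],X^\bullet[n])\to\Hom_\Lambda(P,H^n(X^\bullet))$, and this is the candidate isomorphism.

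The two remaining verifications both rest on projectivity of $P$. For surjectivity of $\overline{\Phi}$: given $\phi\colon P\to H^n(X^\bullet)$, the canonical epimorphism $Z_n(X^\bullet)\twoheadrightarrow H^n(X^\bullet)$ can be lifted along $\phi$, yielding $\widetilde\phi\colon P\to Z_n(X^\bullet)\hookrightarrow X^n$ which defines a cochain map $P[0]\to X^\bullet[n]$ mapping to $\phi$ under $\Phi$. For injectivity: if $\overline{\Phi}(f^\bullet)=0$, then the factorization $\tilde f\colon P\to Z_n(X^\bullet)$ lands in $B_n(X^\bullet)=\Ima(d^{n-1}_{X^\bullet})$, so projectivity of $P$ lets me lift $\tilde f$ along the epimorphism $X^{n-1}\twoheadrightarrow B_n(X^\bullet)$ to some $s^0\colon P\to X^{n-1}$ with $d^{n-1}_{X^\bullet}\circ s^0=f^0$, which is precisely a null-homotopy of $f^\bullet$.

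No step is a serious obstacle; the argument is purely formal bookkeeping once the degrees of $P[0]$ and the shift $X^\bullet[n]$ are tracked correctly. If there is a delicate point, it is the double appeal to projectivity of $P$—first to realize cohomology classes by cocycles, then to witness coboundaries by honest homotopies—so the lemma fails in general without the assumption $P\in\proj(\Lambda)$.
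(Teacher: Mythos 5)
Your proof is correct and is essentially the paper's argument with the details written out: the paper factors the isomorphism as $\Hom_{\Ksf(\modu(\Lambda))}(P[0],X^\bullet[n])\simeq H^n(\Hom_\Lambda(P,X^\bullet))\simeq\Hom_\Lambda(P,H^n(X^\bullet))$, declaring the first step straightforward and citing exactness of $\Hom_\Lambda(P,-)$ for the second, which is precisely where your two appeals to projectivity (realizing cohomology classes by cocycles and coboundaries by homotopies) live. Nothing is missing.
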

\begin{proof} It is straightforward to show that 
\begin{center}
$\Hom_{\Ksf(\modu(\Lambda))}(P[0],X^\bullet[n])\simeq H^n(\Hom_\Lambda(P,X^\bullet)). $ 
\end{center}
Finally, we have that $H^n(\Hom_\Lambda(P,X^\bullet))\simeq \Hom_\Lambda(P,H^n(X^\bullet))$ since $\Hom_\Lambda(P,-)$ is an exact functor.
\end{proof}

Let $n\leq m$ be integers.  We denote by $\mathsf{C}^{[n,m]}(\proj(\Lambda))$ the subcategory of $\mathsf{C}(\proj(\Lambda))$ whose objects are all the $P^\bullet\in\mathsf{C}(\proj(\Lambda))$ such that $P^i=0$ $\forall$ $i<n$ and $i>m.$ 

\begin{lem}\label{l 2.1.1} For $n\in\mathbb{N}^+$ and $P^\bullet\in\mathsf{C}^{[-n-1,0]}(\proj(\Lambda))$ presilting in $\homo,$ the following statements hold true.
\begin{itemize}
\item[(a)] $\HomK(P^\bullet_{\geq-n},P^\bullet_{\geq-n}[m])\simeq \Hom_\Lambda(P^{-n-1},H^{m-n}(P^\bullet))\;\;\forall\,m\geq 1.$
\item[(b)] $P^\bullet_{\geq-n}$ is presilting in $\homo$ $\Leftrightarrow$ 
$\Hom_\Lambda(P^{-n-1},H^{k}(P^\bullet))=0\quad\forall\;k\in[-n+1,0].$
\end{itemize}
\end{lem}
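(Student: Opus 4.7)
My approach is to leverage the short exact sequence of cochain complexes
$$0\to P^\bullet_{\geq -n}\to P^\bullet\to P^{-n-1}[n+1]\to 0,$$
whose cokernel is the stalk complex with $P^{-n-1}$ concentrated in degree $-n-1$. Rotating the induced distinguished triangle in $\homo$ one step backwards gives
$$P^{-n-1}[n]\to P^\bullet_{\geq -n}\to P^\bullet\to P^{-n-1}[n+1],$$
and this triangle is the single lever that will drive both statements.

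For (a), fix $m\geq 1$. Since $P^\bullet$ is concentrated in nonpositive degrees, Lemma \ref{lemtrunc} yields
$$\HomK(P^\bullet_{\geq -n},P^\bullet_{\geq -n}[m])\simeq\HomK(P^\bullet_{\geq -n},P^\bullet[m]).$$
Applying $\HomK(-,P^\bullet[m])$ to the triangle above produces a long exact sequence in which the presilting hypothesis on $P^\bullet$ kills both $\HomK(P^\bullet,P^\bullet[m])$ and $\HomK(P^\bullet,P^\bullet[m+1])$. The sequence therefore collapses to
$$\HomK(P^\bullet_{\geq -n},P^\bullet[m])\simeq\HomK(P^{-n-1}[n],P^\bullet[m])\simeq\HomK(P^{-n-1}[0],P^\bullet[m-n]),$$
and Lemma \ref{LemHsim} (applied to the projective $P^{-n-1}$) identifies the last group with $\Hom_\Lambda(P^{-n-1},H^{m-n}(P^\bullet))$, giving (a).

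For (b), $P^\bullet_{\geq -n}$ is presilting iff $\HomK(P^\bullet_{\geq -n},P^\bullet_{\geq -n}[m])=0$ for every $m\geq 1$, which by (a) translates to $\Hom_\Lambda(P^{-n-1},H^k(P^\bullet))=0$ for every $k\geq -n+1$ (with $k:=m-n$). Because $P^\bullet\in\mathsf{C}^{[-n-1,0]}(\proj(\Lambda))$, one has $P^k=0$ and hence $H^k(P^\bullet)=0$ for all $k>0$, so only the indices $k\in[-n+1,0]$ yield nontrivial conditions. The only point that requires any care is the index bookkeeping at the two flanking terms of the long exact sequence: one must confirm that both $m$ and $m+1$ lie in $[1,\infty)$ for every $m\geq 1$ so that presilting of $P^\bullet$ effectively kills them. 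No real obstacle arises, and the remainder of the argument is a routine assembly of Lemmas \ref{lemtrunc} and \ref{LemHsim}.
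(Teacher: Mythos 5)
Your proof is correct and follows essentially the same route as the paper's: the same degreewise-split exact sequence $0\to P^\bullet_{\geq -n}\to P^\bullet\to P^{-n-1}[n+1]\to 0$, the same rotated triangle with the presilting hypothesis killing the two flanking Hom-groups, and the same invocations of Lemma \ref{lemtrunc} and Lemma \ref{LemHsim}. Your deduction of (b) from (a), including the observation that $H^k(P^\bullet)=0$ for $k>0$ restricts the conditions to $k\in[-n+1,0]$, is exactly the intended (and in the paper, unwritten) argument.
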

\begin{proof}  Note that (b) is a consequence of (a). Thus, it is enough to show (a).
\

 Let $m\geq 1.$ Consider the following distinguished triangle  in $\homo$
\begin{center}
$\eta:\; P^\bullet[-1]\to P^{-n-1}[n]\to P^\bullet_{\geq-n}\to P^\bullet.$
\end{center}
 Applying the functor $\HomK(-,P^\bullet[m])$ to $\eta$ and using that $P^\bullet$ is presilting,  we get the following exact sequence 
$$0\to\HomK(P^\bullet_{\geq-n},P^\bullet[m])\to\HomK(P^{-n-1}[n],P^\bullet[m])\to0.$$
Thus, by Lemma \ref{lemtrunc}, we have that 
$$\HomK(P^\bullet_{\geq-n},P^\bullet_{\geq-n}[m])\simeq\HomK(P^{-n-1}[n],P^\bullet[m]).$$
Finally, from Lemma \ref{LemHsim}, we get that 
$$\HomK(P^{-n-1}[n],P^\bullet[m])\simeq \Hom_\Lambda(P^{-n-1}, H^{m-n}(P^\bullet)),$$
proving the result.
\end{proof}

Recall that $M\in\modu(\Lambda)$ is $\mathbf{sincere}$ if every simple $\Lambda$-module appears as a composition factor in $M$. The following result gives us necessary and sufficient conditions for a $\tau_{n+1}$-rigid $\Lambda$-module to be $\tau_{n}$-rigid.

\begin{pro}\label{c 2.1.1} For a  $\tau_{n+1}$-rigid $M\in\mathrm{mod}(\Lambda),$ with $n\geq 1,$  the following statements hold true.
\begin{itemize}
\item[(a)] $M$ is $\tau_n$-rigid in $\modu(\Lambda)$ $\Leftrightarrow$ $\Hom_\Lambda(P^{-n-1}(M),M)=0.$
\item[(b)]  If $M$ is sincere and $\pd(M)> n,$ then $M$ is not $\tau_n$-rigid in $\modu(\Lambda).$  
\end{itemize}
\end{pro}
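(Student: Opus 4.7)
My plan is to deduce both parts directly from Lemma \ref{l 2.1.1}(b), applied to the truncated complex $P^\bullet:=P^\bullet_{\geq -(n+1)}(M).$ Since $M$ is assumed $\tau_{n+1}$-rigid, Theorem \ref{NAIR, 3.4} gives that $P^\bullet$ is presilting in $\homo,$ and by construction $P^\bullet\in\Csf^{[-n-1,0]}(\proj(\Lambda)).$ Note also that the naive truncation $P^\bullet_{\geq -n}$ of $P^\bullet$ coincides with $\prn(M),$ and so being presilting is exactly the statement that $M$ is $\tau_n$-rigid, again by Theorem \ref{NAIR, 3.4}.

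For (a), I would feed the above data into Lemma \ref{l 2.1.1}(b): $M$ is $\tau_n$-rigid if and only if
$$\Hom_\Lambda(P^{-n-1},H^{k}(P^\bullet))=0\quad\forall\,k\in[-n+1,0].$$
The key observation is then purely about cohomology of the truncated minimal projective resolution. Since $P^\bullet(M)$ is exact in strictly negative degrees, truncating below $-(n+1)$ does not affect the cohomology of $P^\bullet$ in degrees $k\in[-n,-1];$ thus $H^k(P^\bullet)=0$ for such $k$ and the displayed condition is automatic there. Only the degree $k=0$ remains, where $H^0(P^\bullet)\simeq M.$ Hence the whole condition collapses to $\Hom_\Lambda(P^{-n-1}(M),M)=0,$ giving (a).

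For (b), the idea is that sincerity forces every indecomposable projective to have a nonzero map into $M.$ Concretely, for a basic Artin algebra, if $P(S)$ is an indecomposable projective with top $S,$ then $\Hom_\Lambda(P(S),M)\neq 0$ iff $S$ is a composition factor of $M;$ by sincerity this holds for every simple $S,$ and the statement extends to finite direct sums. Because $\pd(M)>n,$ the $(n+1)$-st term $P^{-n-1}(M)$ of the minimal projective resolution is nonzero, so $\Hom_\Lambda(P^{-n-1}(M),M)\neq 0.$ Applying (a) then yields that $M$ cannot be $\tau_n$-rigid, which is exactly (b).

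The only step that requires a little care is the cohomology computation in (a); the rest is assembly from previously stated results. The sincerity-implies-nonzero-Hom fact I would either invoke directly or justify with the standard identification $\Hom_\Lambda(\Lambda e,M)\cong eM,$ noting that $eM\neq 0$ precisely when the simple at $e$ is a composition factor of $M.$
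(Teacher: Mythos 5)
Your proposal is correct and follows essentially the same route as the paper: part (a) is exactly the combination of Lemma \ref{l 2.1.1}(b) (applied to $P^\bullet_{\geq-(n+1)}(M)$, which is presilting by Theorem \ref{NAIR, 3.4}) with the observation that the only possibly nonzero cohomology in degrees $[-n+1,0]$ is $H^0\simeq M$, and part (b) is the same sincerity-plus-$P^{-n-1}(M)\neq 0$ argument the paper uses.
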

\begin{proof} (a) It follows from Lemma \ref{l 2.1.1} (b) and Theorem \ref{NAIR, 3.4}.
\

(b) Note that $P^{-n-1}(M)\neq 0$ since $\pd(M)> n.$ Using that $M$ is sincere and  $P^{-n-1}(M)\neq 0,$ we get that $\Hom_\Lambda(P^{-n-1}(M),M)\neq 0.$ Thus, from (a), we conclude that $M$ is not $\tau_n$-rigid in $\modu(\Lambda).$
\end{proof}

\begin{pro}\label{p1. NAIR, 2.2} Let $n\in\mathbb{N}^+$ and $P^\bullet\in\mathsf{C}^{[-n,0]}(\proj(\Lambda))$ with $H^m(P^\bullet)=0$ $\forall$ $m\in(-n,0)$ and let $M:=H^0(P^\bullet).$ Then, there exists $Q\in\proj(\Lambda)$ such that 
\begin{center}
$P^\bullet=\prn(M)\coprod Q[n]$ in $\homo.$ 
\end{center}
\end{pro}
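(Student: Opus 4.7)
My plan is to exhibit the splitting directly in $\Csf^b(\proj(\Lambda))$, by inductively decomposing each $P^{-k}$ into three summands---the minimal-resolution piece $P^{-k}(M)$, an identity ``column'' that pairs with the residual from degree $-k+1$, and a new residual with zero outgoing differential---and then passing to $\homo$, where the identity columns become contractible and vanish.

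First, I would decompose $P^0$: since $d^0_{P^\bullet}=0$ and $H^0(P^\bullet)=M$, there is a canonical surjection $p^0\colon P^0\twoheadrightarrow M$, and because $\Lambda$ is semiperfect the projective cover $\pi^0_M\colon P^0(M)\to M$ is a direct summand of $p^0$, yielding $P^0\cong P^0(M)\oplus R_0$ with $R_0\to M$ the zero map. Then $\Ker(p^0)=\Omega^1(M)\oplus R_0$, and combining $\Ima(d^{-1})=\Ker(p^0)$, the projectivity of $R_0$ (to split the $R_0$-component of $d^{-1}$) and the projective cover of $\Omega^1(M)$, I would produce $P^{-1}\cong R_0\oplus P^{-1}(M)\oplus R_1$ with $d^{-1}=\id\oplus\pi^{-1}_M\oplus 0$. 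Iterating for $k=2,\dots,n$ and invoking the hypothesis $H^{-k+1}(P^\bullet)=0$ to conclude $\Ima(d^{-k})=\Ker(d^{-k+1})=\Omega^k(M)\oplus R_{k-1}$, the same recipe gives $P^{-k}\cong R_{k-1}\oplus P^{-k}(M)\oplus R_k$ with differential $\id\oplus\pi^{-k}_M\oplus 0$.

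At the final step $k=n$ the residual $R_n$ has zero outgoing differential, and since $P^{-n-1}=0$ there is no preceding degree to provide an incoming identity, so $R_n$ persists as the isolated stalk $R_n[n]$. Gathering the decompositions yields
\begin{equation*}
P^\bullet\;\cong\;\prn(M)\;\oplus\;\bigoplus_{j=0}^{n-1}\bigl(R_j\xrightarrow{\id}R_j\bigr)\;\oplus\;R_n[n]\quad\text{in }\Csf^b(\proj(\Lambda)),
\end{equation*}
and since the middle summand is a sum of contractible two-term complexes (in degrees $-j-1$ and $-j$), the conclusion follows in $\homo$ with $Q:=R_n$. The hard part will be the bookkeeping in the inductive step: the naive splitting of $P^{-k}$ via $R_{k-1}$-projectivity may yield an $R_{k-1}$-section with a spurious $P^{-k+1}(M)$-component, but this can be killed by subtracting a lift of that component through the projective cover $\pi^{-k}_M$ (using projectivity of $R_{k-1}$), after which the decomposition takes the clean form written above.
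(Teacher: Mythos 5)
Your proposal is correct and follows essentially the same route as the paper: the paper's proof simply asserts (without details) the decomposition $P^\bullet=\prn(M)\coprod Q[n]\coprod\coprod_{i=0}^{n-1}(Q_i\xrightarrow{1}Q_i)[i]$ in $\Csf(\proj(\Lambda))$ and then passes to $\homo$, which is exactly the splitting you construct with $Q_i=R_i$ and $Q=R_n$. Your inductive argument via projective covers (using that a direct sum of projective covers is a projective cover, so the surjection $P^{-k}\twoheadrightarrow\Omega^k(M)\oplus R_{k-1}$ splits off $P^{-k}(M)\oplus R_{k-1}$) correctly supplies the details the paper leaves to the reader.
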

\begin{proof} Since $\prn(M)\to M$ is the minimal projective $n$-presentation of $M$ and $P^\bullet\to M$ is another projective $n$-presentation of $M,$ 
it can be shown that there exist $Q_0,Q_1,\cdots, Q_n$ in $\proj(\Lambda)$ such that 
\begin{center}
$P^\bullet=\prn(M)\coprod Q[n]\coprod\coprod_{i=0}^{n-1}(Q_i\xrightarrow{1}Q_i)[i]$ in 
$\mathsf{C}(\proj(\Lambda)).$
\end{center}
From the above decomposition of complexes, we get the result.
\end{proof}

\begin{pro}\label{p1} $rk(M)=rk(\prn(M)),$ for any $M\in\modu(\Lambda).$
\end{pro}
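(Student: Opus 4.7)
The plan is to combine Krull--Schmidt in $\homo$ with the minimality of $\pr(M)$ so as to match indecomposable summands of $M$ with indecomposable summands of $\prn(M).$

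First I would write $M=\bigoplus_{i=1}^{r}M_i^{a_i}$ with the $M_i$ pairwise non-isomorphic indecomposables, so that $rk(M)=r.$ Since minimal projective resolutions distribute over direct sums, $\prn(M)\simeq\bigoplus_{i=1}^{r}\prn(M_i)^{a_i}$ in $\Csf^b(\proj(\Lambda))$ and hence in $\homo.$ Applying the cohomological functor $H^0$ shows that $\prn(M_i)\simeq\prn(M_j)$ in $\homo$ forces $M_i\simeq M_j,$ so the $\prn(M_i)$ are pairwise non-isomorphic. By Krull--Schmidt in $\homo,$ it then suffices to prove that $\prn(N)$ is indecomposable in $\homo$ whenever $N\in\modu(\Lambda)$ is indecomposable.

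For this, suppose $\prn(N)\simeq X^\bullet\oplus Y^\bullet$ in $\homo.$ Since $\prn(N)$ is a minimal complex and two minimal complexes of projectives that are isomorphic in $\homo$ are already isomorphic in $\Csf^b(\proj(\Lambda)),$ after replacing each of $X^\bullet$ and $Y^\bullet$ by its minimal model one obtains a decomposition $\prn(N)=X^\bullet\oplus Y^\bullet$ in $\Csf^b(\proj(\Lambda))$ with both summands minimal. Applying $H^0$ and using indecomposability of $N,$ one summand, say $Y^\bullet,$ has $H^0(Y^\bullet)=0.$

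The heart of the argument is then to show by induction that $Y^{-i}=0$ for every $i$ with $0\leq i\leq n.$ For $i=0,$ vanishing of $H^0(Y^\bullet)$ makes $d^{-1}_{Y^\bullet}$ surjective; being the restriction of the radical map $d^{-1}_{\pr(N)},$ its image lies in $\rad(P^0(N)),$ giving $Y^0\subseteq \rad(Y^0)$ and hence $Y^0=0$ by Nakayama. For $1\leq i\leq n,$ the inductive hypothesis $Y^{-i+1}=0$ forces $d^{-i}_{Y^\bullet}=0,$ whence $Y^{-i}\subseteq\Ker(d^{-i}_{\pr(N)})=\Omega^{i+1}(N);$ minimality of $\pr(N)$ makes $\Omega^{i+1}(N)$ the kernel of the projective cover $P^{-i}(N)\twoheadrightarrow\Omega^{i}(N),$ i.e.\ a superfluous submodule of $P^{-i}(N),$ so $Y^{-i}\subseteq\rad(P^{-i}(N))$ and again $Y^{-i}=0.$ Thus $Y^\bullet=0$ and $\prn(N)$ is indecomposable, whence Krull--Schmidt yields $rk(\prn(M))=r=rk(M).$

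The main obstacle is the lift from the homotopy-category decomposition to an honest decomposition in $\Csf^b(\proj(\Lambda));$ once that step is in hand, the minimality of $\pr(N)$ together with the projective-cover property of each $P^{-i}(N)\twoheadrightarrow\Omega^{i}(N)$ turns the indecomposability statement into a routine radical-and-Nakayama argument.
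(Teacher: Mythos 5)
Your proof is correct, and it shares the paper's overall skeleton (reduce to $M$ indecomposable via additivity of minimal resolutions and $H^0$, then prove $\prn(N)$ indecomposable and invoke Krull--Schmidt in $\homo$), but the key step is handled by a genuinely different argument. The paper deduces indecomposability from Proposition \ref{p1. NAIR, 2.2}: given $\prn(M)=P^\bullet\coprod Q^\bullet$ in $\homo$ with $H^0(P^\bullet)=M$ and vanishing intermediate cohomology, that proposition (a comparison of an arbitrary projective $n$-presentation with the minimal one) yields $\prn(M)\mid P^\bullet$, and cancellation in the Krull--Schmidt category $\homo$ forces $Q^\bullet=0$. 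You instead lift the splitting to $\Csf^b(\proj(\Lambda))$ using the uniqueness of minimal models (a homotopy equivalence of minimal complexes of projectives is an isomorphism of complexes) and then kill the acyclic summand degreewise: $H^0(Y^\bullet)=0$ plus radical differentials gives $Y^0=0$ by Nakayama, and for $i\geq1$ the vanishing of $Y^{-i+1}$ puts $Y^{-i}$ inside $\Ker(\pi^{-i}_N)=\Omega^{i+1}(N)\subseteq\rad(P^{-i}(N))$, so Nakayama applies again. Your route is more self-contained and elementary in that it bypasses Proposition \ref{p1. NAIR, 2.2}, at the cost of explicitly invoking the minimal-model facts; it also has the minor virtue of making fully explicit the passage from a splitting in $\homo$ to one in $\Csf^b(\proj(\Lambda))$, a point the paper's argument (which applies Proposition \ref{p1. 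NAIR, 2.2} to a homotopy-category direct summand $P^\bullet$ as if it were already a complex concentrated in degrees $[-n,0]$) leaves implicit.
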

\begin{proof} Since $\prn(\coprod_{i=1}^n\,M_i)=\coprod_{i=1}^n\,\prn(M_i)$ (use that the finite coproduct of projective covers is a projective cover) and $H^0(\prn(N))=N,$ we can assume that $M$ is indecomposable. Thus, we need to show that $\prn(M)$ is indecomposable in $\homo.$
\

Let $\prn(M)=P^\bullet\coprod Q^\bullet$ in $\homo.$ Then $H^0(P^\bullet)\coprod H^0(Q^\bullet)=M$ and since $M$ is indecomposable, we can assume that $H^0(P^\bullet)=M$. Using that $H^k(P^\bullet)=0$ $\forall$ $k\in[-n+1,-1],$ we get from Proposition \ref{p1. NAIR, 2.2} that $\prn(M)\;|\;P^\bullet$ in $\homo.$ Since $P^\bullet\;|\;\prn(M)$ and $\homo$ is a Krull-Schmidt category, we conclude that $\prn(M)\cong P^\bullet$ and thus $Q^\bullet=0.$ Therefore $\prn(M)$ is indecomposable in $\homo.$\end{proof}

Now, we are ready to state and proof the following basic properties of $\tau_n$-tilting modules.

\begin{teo}\label{c1} For a $\tau_n$-tilting $M\in\modu(\Lambda),$ the following statements hold true.
\begin{itemize}
\item[(a)] $M$ is sincere and $rk(M)=rk(\Lambda).$
\item[(b)] $M$ is not $\tau_{n+1}$-rigid in $\modu(\Lambda)$ or $\pd(M)\leq n.$
\end{itemize}
\end{teo}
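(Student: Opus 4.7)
The strategy is to prove part (a) first---splitting it into the rank equality and the sincerity of $M$---and then to reduce part (b) to Proposition \ref{c 2.1.1}(b) via Theorem \ref{NAIR, 3.4}. For the rank equality $rk(M)=rk(\Lambda)$, I would combine Proposition \ref{p1}, which gives $rk(M)=rk(\prn(M))$, with the standard silting-theoretic fact from \cite{AI12} that the indecomposable direct summands of a silting object in $\homo$ form a $\mathbb{Z}$-basis of the Grothendieck group $K_0(\homo)\simeq K_0(\proj(\Lambda))$, which is free of rank $rk(\Lambda)$.

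For sincerity, I would split into cases according to $\pd(M)$. If $\pd(M)\leq n$, then $P^{-k}(M)=0$ for $k>n$, so $\prn(M)$ coincides with $\pr(M)$ in $\homo$; Proposition \ref{We13, 3.7} then yields that $M$ is tilting in $\modu(\Lambda)$, and the defining exact sequence $0\to\Lambda\to T_0\to\cdots\to T_n\to 0$ with $T_i\in\add(M)$ witnesses the sincerity of $M$. If $\pd(M)>n$, I would argue by contradiction: suppose some primitive idempotent $e$ satisfies $eM=0$. The silting assumption gives $\Lambda[0]\in\thick(\add(\prn(M)))$, and Lemma \ref{LemHsim} yields $\HomK(\Lambda[0],\prn(M)[k])\simeq H^k(\prn(M))=0$ for every $k>0$, placing $\Lambda[0]$ in the co-aisle associated to the silting object. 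Invoking the co-$t$-structure description of silting (the dualized form of \cite[Proposition 2.23(a)]{AI12}), $\Lambda[0]$ admits a finite filtration $\Lambda[0]\in\add(\prn(M))*\add(\prn(M))[-1]*\cdots*\add(\prn(M))[-N]$ for some $N\geq 0$, built from triangles whose pieces are shifts $\prn(N_\alpha)[-k]$ with $N_\alpha\in\add(M)$. Applying the cohomological functor $H^0$ to this filtration, and using the crucial vanishing $H^j(\prn(N_\alpha))=0$ for $j\in(-n,0)$, only the pieces at $k=0$ (contributing $N_\alpha\in\add(M)$) and at $k=n$ (contributing $\Omega^n N_\alpha$) yield nonzero cohomology; the long exact sequences telescope, and careful bookkeeping produces a surjection of an object in $\add(M)$ onto $\Lambda$. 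This forces $S(e)$ to be a composition factor of $M$, contradicting $eM=0$.

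For part (b), assume $M$ is $\tau_n$-tilting. If $M$ is not $\tau_{n+1}$-rigid the conclusion holds; otherwise $M$ is $\tau_{n+1}$-rigid and, by (a), sincere. Since $\prn(M)$ is silting and hence presilting, Theorem \ref{NAIR, 3.4} gives that $M$ is $\tau_n$-rigid. If additionally $\pd(M)>n$, Proposition \ref{c 2.1.1}(b) applied to this sincere, $\tau_{n+1}$-rigid module would force $M$ not to be $\tau_n$-rigid, a contradiction. Hence $\pd(M)\leq n$, as required.

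The principal obstacle is the sincerity argument when $\pd(M)>n$: controlling the $\Omega^n(M)$-contributions in the telescoped long exact cohomology sequences (so that they do not introduce extraneous composition factors outside those of $M$) requires careful bookkeeping of the filtration steps and is the only place where the interaction between the two nonzero cohomologies of $\prn(M)$ has to be handled nontrivially.
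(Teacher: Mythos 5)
Your rank count in (a) and your whole argument for (b) coincide with the paper's: the paper also combines Proposition \ref{p1} with \cite[Corollary 2.28]{AI12} for the equality $rk(M)=rk(\Lambda),$ and derives (b) from sincerity together with Proposition \ref{c 2.1.1} (b) exactly as you do. The genuine gap is the sincerity argument in the case $\pd(M)>n,$ which you yourself flag as the principal obstacle. The endpoint you aim for --- a surjection from an object of $\add(M)$ onto $\Lambda$ --- cannot exist in general: such a surjection would split, forcing $\Lambda\in\add(M),$ whereas already for $n=1$ there are sincere $\tau$-tilting modules with $\Lambda\notin\add(M),$ and Example \ref{ExImp} exhibits a $\tau_{2,2}$-tilting module $M=P(2)\oplus P(1)\oplus S(1)$ of infinite projective dimension with $P(3)\notin\add(M).$ What the filtration of $\Lambda[0]$ by $\add(\prn(M))$ actually produces after applying $H^0$ and telescoping (this is Lemma \ref{l6. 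NAIR, 2.2} (c), used in the proof of Theorem \ref{NAIR, 2.2} (a)) is an exact sequence $\Lambda\xrightarrow{f}M_0\to\cdots\to M_n\to 0$ with $M_i\in\add(M)$: the arrow points the other way, $f$ need not be injective, and multiplying this sequence by a primitive idempotent $e$ with $eM=0$ gives the exact sequence $e\Lambda\to 0\to\cdots\to 0,$ which yields no contradiction. So no amount of bookkeeping along this route recovers sincerity.

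The paper's sincerity argument is different and needs no case split on $\pd(M).$ Given $P\in\proj(\Lambda)$ with $\Hom_\Lambda(P,M)=0,$ set $Q^\bullet:=\prn(M)\coprod P[n].$ For $k\geq 1$ the only possibly nonzero summand of $\Hom_{\homo}(Q^\bullet,Q^\bullet[k])$ is $\Hom_{\homo}(P[n],\prn(M)[k])\simeq\Hom_\Lambda\bigl(P,H^{k-n}(\prn(M))\bigr)$ by Lemma \ref{LemHsim}, and since $k\geq 1$ the module $H^{k-n}(\prn(M))$ is either $0$ or $M,$ so this vanishes. Hence $Q^\bullet$ is presilting, and being presilting with the silting object $\prn(M)$ as a direct summand it is silting; \cite[Corollary 2.28]{AI12} then forces $rk(\prn(M))=rk(Q^\bullet)=rk(\prn(M))+rk(P[n]),$ whence $P=0.$ Your subcase $\pd(M)\leq n$ (via Proposition \ref{We13, 3.7} and faithfulness of tilting modules) is correct but is subsumed by this uniform argument.
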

\begin{proof}  (a) Since $\Lambda[0]$ is silting in $\homo,$ we get from \cite[Corollary, 2.28]{AI12} and Proposition \ref{p1} that $rk(M)=rk(\prn(M))=rk(\Lambda[0])=rk(\Lambda).$
\

Let us prove that $M$ is sincere. Let $P\in\proj(\Lambda)$ be such that $\Hom_\Lambda(P,M)=0.$ We need to show that $P=0.$ Consider $Q^\bullet:=\prn(M)\coprod P[n].$ 
We assert that $Q^\bullet$ is silting in $\homo.$ Since $\prn(M)$ is silting and a direct summand of $Q^\bullet,$ it is enough to prove that $Q^\bullet$ is presilting.
Indeed, for $k\geq 1,$ we have that 
$\Hom_{\homo}(Q^\bullet,Q^\bullet[k])$ decomposes as the coproduct of four direct summands, three of them are easy to check that they are zero. The only one that needs more details is the direct summand $\Hom_{\homo}(P[n],\prn(M)[k]).$ In order to show that this summand is also zero, by Lemma \ref{LemHsim}, it is enough to show that 
$\Hom_\Lambda (P,H^{k-n}(\prn(M)))=0,$ which holds true since  $H^{k-n}(\prn(M))=0$ for 
$k<n-1$ or $k>n,$ and $H^{k-n}(\prn(M))=M$ for $k=n.$ Thus we get that $Q^\bullet$ is silting in $\homo.$ Then, by \cite[Corollary, 2.28]{AI12}, we conclude that 
\begin{center}
$rk(\prn(M))=rk(Q^\bullet)=rk(\prn(M)\coprod P[n])=rk(\prn(M))+rk(P[n]).$
\end{center}
Therefore $rk(P[n])=0$ and thus $P=0.$
\

(b) By (a) we know that $M$ is sincere. Suppose that $M$ is $\tau_{n+1}$-rigid and 
$\pd(M)> n.$ Then, by Proposition \ref{c 2.1.1} (b), we get that $M$ is not $\tau_n$-rigid, contradicting that $M$ is $\tau_n$-tilting.
\end{proof}

The following result gives us \cite[Theorem 1.3 and Theorem 3.2]{Zh22}  as a particular case (take $n=1$) and characterizes the $\tau_n$-tilting modules which are $n$-tilting.

\begin{teo}\label{TEO} For a $\tau_n$-tilting $M\in\modu(\Lambda),$ the following statements are equivalent.
\begin{itemize}
\item[(a)] $M$ is $n$-tilting in $\modu(\Lambda).$
\item[(b)] $\coresdim_{\add(M)}(\Lambda)\leq n.$
\item[(c)] $M$ is faithful.
\item[(d)] $M\in{}^{\perp_{n+i}}\gen(M)$ $\forall$ $i\geq 0.$
\item[(e)] $M\in{}^{\perp_{n+1}}\gen(M).$
\item[(f)] $M$ is $\tau_{n+1}$-rigid.
\item[(g)] $\pd(M)\leq n.$
\item[(h)] $M\in M^{\perp_{n+1}}.$
\end{itemize}
\end{teo}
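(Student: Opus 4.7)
The plan is to establish the equivalences via a cycle of implications that cycles through all eight conditions. First I will show $(a)\Leftrightarrow(b)\Leftrightarrow(c)\Leftrightarrow(g)$: $(a)\Rightarrow(b)$ is immediate from Miyashita's definition of $n$-tilting; $(b)\Rightarrow(c)$ follows since the exact sequence $0\to\Lambda\to M_0\to\cdots\to M_n\to 0$ embeds $\Lambda\hookrightarrow M_0\in\add(M)$, forcing $\ann(M)=0$; $(c)\Rightarrow(g)$ is Lemma~\ref{l5. NAIR, 2.2}; and for $(g)\Rightarrow(a)$, combine Corollary~\ref{CoroNAIR, 3.4}(b) (which under $\pd(M)\leq n$ gives $M^{\perp_{\tau_n}}=M^\perp$) with Theorem~\ref{p4. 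NAIR, 2.2} ($M^{\perp_{\tau_n}}=\gen_n(M)$) to obtain $\gen_n(M)=M^\perp$, and apply Bazzoni's Theorem~\ref{Ba04, 3.11}.

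Next I handle the sub-cycle $(g)\Rightarrow(d)\Rightarrow(e)\Rightarrow(f)\Rightarrow(g)$ and the trivial implication $(g)\Rightarrow(h)$. For $(g)\Rightarrow(d)$: $\pd(M)\leq n$ kills $\Ext^{n+i}(M,-)$ for all $i\geq 1$, while the case $i=0$ follows from $\tau_n$-rigidity via Corollary~\ref{CoroNAIR, 3.4}(c). $(d)\Rightarrow(e)$ is the specialization $i=1$. For $(e)\Rightarrow(f)$, Corollary~\ref{CoroNAIR, 3.4}(c) translates $M\in{}^{\perp_{n+1}}\gen(M)$ into $\Hom_\Lambda(M,\tau_{n+1}(M))=0$, which together with $M\in\cap_{i=1}^n M^{\perp_i}$ (from $\tau_n$-rigidity and Corollary~\ref{CoroNAIR, 3.4}(a)) yields $M\in M^{\perp_{\tau_{n+1}}}$. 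For $(f)\Rightarrow(g)$, Theorem~\ref{c1}(b) forces $\pd(M)\leq n$ since $M$ is sincere by Theorem~\ref{c1}(a). Finally $(g)\Rightarrow(h)$ is trivial, as $\pd(M)\leq n$ gives $\Ext^{n+1}(M,M)=0$.

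The closing implication $(h)\Rightarrow(f)$ is the technically subtle step. First, I will establish, for any $\tau_n$-tilting $M$, the computation $\HomK(\pr_{\geq -n-1}(M),\pr_{\geq -n-1}(M)[k])=0$ for $k\geq 1$ with $k\neq n+1$, and $\HomK(\pr_{\geq -n-1}(M),\pr_{\geq -n-1}(M)[n+1])\cong\Hom_\Lambda(P^{-n-1}(M),M)$. This is obtained by applying $\HomK(-,\pr_{\geq -n-1}(M)[k])$ to the distinguished triangle $\prn(M)\to\pr_{\geq -n-1}(M)\to P^{-n-1}(M)[n+1]\to\prn(M)[1]$ and invoking Lemma~\ref{LemHsim}, Lemma~\ref{lemtrunc}, and the silting of $\prn(M)$. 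By Theorem~\ref{NAIR, 3.4} this identifies $(f)$ with the condition $\Hom_\Lambda(P^{-n-1}(M),M)=0$, which by the sincerity of $M$ (Theorem~\ref{c1}(a)) is in turn equivalent to $P^{-n-1}(M)=0$, i.e.\ to $(g)$. To close the cycle I will deduce this vanishing from $(h)$ via Auslander--Reiten duality $\Ext^{n+1}(M,M)\cong D\overline{\Hom}_\Lambda(M,\tau_{n+1}(M))$: under $(h)$ every morphism $M\to\tau_{n+1}(M)$ factors through an injective module. The main obstacle will be promoting this to $\Hom_\Lambda(M,\tau_{n+1}(M))=0$ itself (equivalently $\tau_{n+1}(M)=0$, hence $\pd(M)\leq n$), which will require exploiting the sincerity of $M$ together with the fact that $\tau_{n+1}(M)=\tau(\Omega^n(M))$ has no injective summand.
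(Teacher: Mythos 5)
Most of your cycle is sound and runs parallel to the paper: $(a)\Rightarrow(b)\Rightarrow(c)$, $(c)\Rightarrow(g)$ via Lemma \ref{l5. NAIR, 2.2}, $(g)\Rightarrow(d)\Rightarrow(e)\Rightarrow(f)\Rightarrow(g)$ using Corollary \ref{CoroNAIR, 3.4} and Theorem \ref{c1}, and $(g)\Rightarrow(h)$ trivially. Your $(g)\Rightarrow(a)$ via $\gen_n(M)=M^{\perp_{\tau_n}}=M^\perp$ and Bazzoni (Theorem \ref{Ba04, 3.11}) is a legitimate alternative to the paper's shorter route through Proposition \ref{We13, 3.7}. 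Your reduction of $(f)$ to the condition $\Hom_\Lambda(P^{-n-1}(M),M)=0$ is also correct and is a nice ``inverted'' form of Lemma \ref{l 2.1.1}, placing the presilting hypothesis on the truncation $\prn(M)$ rather than on the longer complex; only note that the triangle should read $P^{-n-1}(M)[n+1]\to\pr_{\geq -n-1}(M)\to\prn(M)\to P^{-n-1}(M)[n+2]$, i.e.\ the shifted projective is the ``sub'' and $\prn(M)$ the ``quotient''.

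The genuine gap is the closing step $(h)\Rightarrow(f)$, equivalently $(h)\Rightarrow\Hom_\Lambda(M,\tau_{n+1}(M))=0$. Auslander--Reiten duality only gives $\Ext^{n+1}_\Lambda(M,M)\simeq D\overline{\Hom}_\Lambda(M,\tau_{n+1}(M))$, so $(h)$ says that every morphism $M\to\tau_{n+1}(M)$ factors through an injective; neither the sincerity of $M$ nor the absence of injective direct summands in $\tau_{n+1}(M)$ promotes this to genuine vanishing, since any restriction of a nonzero map from the injective envelope of $M$ to $\tau_{n+1}(M)$ factors through an injective irrespective of those two properties. The fact that would actually kill the injectively-factoring maps is $\Hom_\Lambda(D_{\Lambda^{op}}(\Lambda),\tau_{n+1}(M))=0$, which is equivalent to $\pd(M)\leq n+1$ --- essentially the conclusion you are trying to reach. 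The correct bridge between $\Ext$-vanishing and honest $\Hom$-vanishing into $\tau_{n+1}(M)$ is Auslander--Smal{\o} (Proposition \ref{AS81, 5.8}, packaged as Corollary \ref{CoroNAIR, 3.4} (c)): one has $\Hom_\Lambda(M,\tau_{n+1}(M))=0$ if and only if $\Ext^{n+1}_\Lambda(M,Y)=0$ for \emph{all} $Y\in\gen(M)$, i.e.\ condition $(e)$; testing only against $Y=M$, which is all that $(h)$ provides, is strictly weaker. This is precisely the delicate point in the paper's own argument for $(h)\Rightarrow(g)$, which routes through Theorem \ref{c1} (b) and Corollary \ref{CoroNAIR, 3.4} (c) and must pass from $\Ext^{n+1}_\Lambda(M,M)=0$ to the vanishing of $\Ext^{n+1}_\Lambda(M,-)$ on $\gen(M)$. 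Your proposal does not supply that passage, so the cycle is not closed.
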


\begin{proof} The proof of (a) $\Rightarrow$ (b,h) and (d) $\Rightarrow$ (e) are trivial.

(b) $\Rightarrow$ (c) It is follows from the \cite[Lemma VI.2.2]{ASS06}.

(c) $\Rightarrow$ (d) By Lemma \ref{l5. NAIR, 2.2}, we get that $\pd(M)\leq n,$ and then $M\in {}^{\perp_{n+i}}\gen(M)$ $\forall$ $i\geq 1.$ On the other hand, from Corollary \ref{CoroNAIR, 3.4} (c), we know that $M\in {}^{\perp_{n}}\gen(M).$ 
\

(e) $\Rightarrow$ (f) By Corollary \ref{CoroNAIR, 3.4} (c), we get 
$M\in {}^{\perp_{0}}\tau_{n+1}(M).$ Moreover, since 
$M\in M^{\perp_{\tau_n}}={}^{\perp_{0}}\tau_{n}(M)\cap\cap_{i=1}^{n-1}\,M^{\perp_i},$ from Corollary \ref{CoroNAIR, 3.4} (a), it follows that $M\in\cap_{i=1}^{n}\,M^{\perp_i}.$ Therefore, $M\in M^{\perp_{\tau_{n+1}}}.$
\

(f) $\Rightarrow$ (g) It follows from Theorem \ref{c1} (b).
\

(g) $\Rightarrow$ (a) Note that $\prn(M)=P^\bullet(M)$ since $\pd(M)\leq n.$  In particular, we get that  $P^\bullet(M)$ is silting in $\homo$ and then, from  Proposition \ref{We13, 3.7}, we conclude that $M$ is $n$-tilting.
\

(h) $\Rightarrow$ (g) Notice firstly that $M\in \cap_{i=1}^{n+1}M^{\perp_i}$ since $M$ is $\tau_n$-rigid and $M\in M^{\perp_{n+1}}.$

Suppose that $\pd(M)>n.$ Then, by Theorem \ref{c1} (b) $M$ is not $\tau_{n+1}$-rigid and thus $M\not\in {}^{\perp_0}\tau_{n+1}(M)$ since $M\in \cap_{i=1}^{n+1}M^{\perp_i}.$ Moreover, from Corollary \ref{CoroNAIR, 3.4} (c), we get 
that $M\not\in{}^{\perp_{n+1}}\gen(M)\subseteq {}^{\perp_{n+1}}M$ which is a contradiction, proving that $\pd(M)\leq n.$
\end{proof}

\section{ $\tau_{n,m}$-tilting modules}

The main objective of this section is to study $\tau_{n,m}$-tilting $\Lambda$-modules and its relationship with silting objects in $\homo$ and $m$-tilting 
$\Lambda/I$-modules, where $I\trianglelefteq\Lambda$ comes as the annihilator of sincere modules. The above is a natural extension of the connection, given  in \cite{Ja15}, between $\tau$-tilting 
$\Lambda$-modules  and $1$-tilting $\Lambda/I$-modules; and the given  one in \cite{AIR14} between $\tau$-tilting $\Lambda$-modules  and $2$-term silting complexes in $\homo$.
\

We start with a very useful characterization of the sincere $\Lambda$-modules.

\begin{lem}\label{l4. NAIR, 2.2} Let $M\in\modu(\Lambda).$ Then $M$ is sincere $\Leftrightarrow$  $^{\perp_0}\gen(M)=0.$
\end{lem}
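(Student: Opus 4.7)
The plan is to prove both implications by contraposition, exploiting that any $Y\in\gen(M)$ is a quotient of $M^k$ for some $k,$ so the composition factors of $Y$ lie among those of $M.$ For the implication ``${}^{\perp_0}\gen(M)=0\Rightarrow M$ sincere'', I would suppose $M$ is not sincere and pick a simple $\Lambda$-module $S$ which is not a composition factor of $M.$ Then $S$ is not a composition factor of any $Y\in\gen(M).$ Any nonzero morphism $S\to Y$ would be injective (as $S$ is simple), exhibiting $S$ as a submodule, hence a composition factor, of $Y,$ a contradiction. Therefore $\Hom_\Lambda(S,Y)=0$ for every $Y\in\gen(M),$ so $0\neq S\in{}^{\perp_0}\gen(M),$ contradicting the hypothesis.

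For the converse, I assume $M$ is sincere, take $X\in{}^{\perp_0}\gen(M),$ and argue that $X=0.$ If $X\neq0,$ then since $\Lambda$ is Artin the module $X$ admits a simple quotient $\pi:X\twoheadrightarrow T.$ By sincerity, $T$ is a composition factor of $M,$ so fixing a composition series $0=M_0\subset M_1\subset\cdots\subset M_n=M$ there is an index $j$ with $M_j/M_{j-1}\simeq T.$ The quotient $M/M_{j-1}$ belongs to $\gen(M)$ and contains $M_j/M_{j-1}\simeq T$ as a submodule, yielding an embedding $\iota:T\hookrightarrow M/M_{j-1}.$ The composite $\iota\circ\pi:X\to M/M_{j-1}$ is then a nonzero morphism into an object of $\gen(M),$ contradicting $X\in{}^{\perp_0}\gen(M).$

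The main subtlety, and the point where a naive attempt can stumble, is the choice of target in $\gen(M).$ Working only with simple quotients of $M$ (i.e., with the simples occurring in $M/\rad(M)$) is insufficient, because a composition factor of $M$ need not appear in the top of $M;$ for the quiver $1\to 2,$ for instance, the indecomposable projective $P(1)$ is sincere but its top is just $S(1),$ so $S(2)\notin\gen(P(1)).$ The fix is to replace $T$ by the quotient $M/M_{j-1}$ coming from a composition series, which exhibits $T$ as a \emph{submodule} of an object of $\gen(M)$ rather than requiring $T$ itself to lie in $\gen(M).$
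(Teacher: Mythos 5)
Your proof is correct. For the implication ``$M$ sincere $\Rightarrow{}^{\perp_0}\gen(M)=0$'' you argue essentially as the paper does: a nonzero $X\in{}^{\perp_0}\gen(M)$ admits a simple quotient $T,$ which by sincerity embeds into $M/M_{j-1}\in\gen(M)$ via a composition series of $M,$ producing a nonzero map $X\to M/M_{j-1}$; the paper phrases this as showing $\Hom_\Lambda(N,S)=0$ for every simple $S$ and then invoking finite length, but the mechanism (the embedding of $M_j/M_{j-1}$ into $M/M_{j-1}$, obtained there by the snake lemma) is the same. For the converse the two arguments genuinely diverge: the paper tests sincerity against projectives, observing that $\Hom_\Lambda(P,M)=0$ for $P$ projective forces $P\in{}^{\perp_0}\gen(M)$ (by lifting along presentations $M^k\twoheadrightarrow Y$) and hence $P=0,$ which implicitly relies on the standard fact that $S(i)$ is a composition factor of $M$ if and only if $\Hom_\Lambda(P(i),M)\neq0.$ You instead test against simples: if $S$ is a simple that is not a composition factor of $M,$ then it is not a composition factor of any quotient of $M^k,$ so every map $S\to Y$ with $Y\in\gen(M)$ vanishes and $0\neq S\in{}^{\perp_0}\gen(M).$ Both routes are valid; yours is marginally more self-contained since it avoids the projective-cover characterization of composition factors, while the paper's is shorter on the page. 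Your closing remark about why one must pass to $M/M_{j-1}$ rather than to simple quotients of $M$ (illustrated with $P(1)$ over the quiver $1\to2$) correctly identifies the only real trap in the lemma, and the fix you describe is exactly the device the paper uses.
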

\begin{proof}
$(\Rightarrow)$ Let $N\in{}^{\perp_0}\gen(M).$ We assert that $\Hom_\Lambda(S,N)=0,$ for any simple $\Lambda$-module $S.$ Indeed, let 
$0= M_0\leq M_1\leq\cdots\leq M_n=M$ be a composition series for $M,$ and let $S$ be a simple $\Lambda$-module. Since  $M$ is sincere, there is some $j\in[0,n-1]$ such that $S\cong M_{j+1}/M_j.$ Thus, we get the following exact and commutative diagram
$$\xymatrix{0\ar[r]& M_j\ar[r]\ar@{=}[d]& M_{j+1}\ar @{^{(}->}[d]\ar[r]& S\ar[r]\ar[d]_{t_j}&0\\
0\ar[r]& M_{j}\ar[r]& M\ar[r]& M/M_j\ar[r]&0.}$$
By  snake's Lemma, $t_j$ is a monomorphism and thus 
\begin{center}
$0\to\Hom_\Lambda(N,S)\to\Hom_\Lambda(N,M/M_j)$
\end{center} 
is an exact sequence. Note that  $\Hom_\Lambda(N,M/M_j)=0$  since 
$M/M_j\in \gen(M)$ and therefore our assertion follows. Finally, using that $N$ is of finite length, from our assertion, we get that $N=0.$
\

$(\Leftarrow)$ Let $P\in\proj(\Lambda)$ be such that $\Hom_\Lambda(P,M)=0.$ Using that $P\in\proj(\Lambda),$ we get that  $P\in{}^{\perp_0}\gen(M)$ and then $P=0.$
\end{proof}

\begin{lem}\label{l3. NAIR, 2.2} 
Let $P^\bullet\in\mathsf{C}^{\leq0}(\proj(\Lambda))$ and $M\in\modu(\Lambda).$ Then 
\begin{center}
$\Hom_\Lambda(H^0(P^\bullet),M)=0\;$ $\Leftrightarrow$ $\;\Hom_{\mathsf{K}(\proj(\Lambda))}(P^\bullet,\pr(M))=0$.
\end{center}
\end{lem}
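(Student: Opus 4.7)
The proof will go by checking the two implications separately, with both halves resting on the classical comparison/lifting argument for projective resolutions. Throughout, write $\pi_M:P^0(M)\to M$ for the augmentation and observe that both $P^\bullet$ and $P^\bullet(M)$ live in $\Csf^{\leq 0}(\proj(\Lambda))$, so $H^0(P^\bullet)=P^0/B^0(P^\bullet)$ and the canonical surjection $P^0\twoheadrightarrow H^0(P^\bullet)$ is available. The key structural input is that $P^\bullet(M)$ is exact in every degree $i<0$, i.e.\ $Z^i(P^\bullet(M))=B^i(P^\bullet(M))$ for all $i\leq 0$.

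For the implication $(\Leftarrow)$, I would start with an arbitrary morphism $g:H^0(P^\bullet)\to M$ and lift it to a chain map $\tilde g^\bullet:P^\bullet\to P^\bullet(M)$ inducing $g$ on $H^0$. The lift is constructed inductively: first lift the composite $P^0\twoheadrightarrow H^0(P^\bullet)\xrightarrow{g}M$ through the projective cover $\pi_M$ using that $P^0$ is projective, obtaining $\tilde g^0:P^0\to P^0(M)$; then inductively define $\tilde g^{-n}$ using projectivity of $P^{-n}$ and the equality $Z^{-n+1}(P^\bullet(M))=B^{-n+1}(P^\bullet(M))$. By hypothesis $\tilde g^\bullet=0$ in $\Ksf(\proj(\Lambda))$, and since $H^0$ is well defined on the homotopy category we conclude $g=H^0(\tilde g^\bullet)=0$.

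For the implication $(\Rightarrow)$, I would take a chain map $f^\bullet:P^\bullet\to P^\bullet(M)$ and first observe that the composite $\pi_M\circ f^0:P^0\to M$ annihilates $B^0(P^\bullet)$ (because $\pi_M d^{-1}_{P^\bullet(M)}=0$ and $f^\bullet$ is a chain map), so it factors through $H^0(P^\bullet)$ and the factored map is precisely $H^0(f^\bullet)$ under $H^0(P^\bullet(M))\simeq M$. By the vanishing hypothesis $H^0(f^\bullet)=0$, hence $\pi_M f^0=0$, so $f^0$ lands in $B^0(P^\bullet(M))=\operatorname{Im} d^{-1}_{P^\bullet(M)}$. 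Using projectivity of $P^0$, choose $s^0:P^0\to P^{-1}(M)$ with $d^{-1}_{P^\bullet(M)} s^0=f^0$. Then $f^{-1}-s^0 d^{-1}_{P^\bullet}$ lands in $Z^{-1}(P^\bullet(M))=B^{-1}(P^\bullet(M))$, so projectivity of $P^{-1}$ produces $s^{-1}$, and the procedure continues downward. This yields a null-homotopy $\{s^i\}_{i\leq 0}$ of $f^\bullet$, so $f^\bullet=0$ in $\Ksf(\proj(\Lambda))$.

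Neither direction carries a serious obstacle: both rely on the same fact, namely that projective resolutions are exact below degree $0$ and that each term of $P^\bullet$ is projective, which is exactly what drives the iterative lifting. The only care point is bookkeeping the surjection $P^0\twoheadrightarrow H^0(P^\bullet)$ correctly so that the induced map on $H^0$ is identified with $\pi_M f^0$ (respectively with the prescribed $g$); I would emphasize this identification in the write-up to make both implications transparent.
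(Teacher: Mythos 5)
Your proof is correct and follows essentially the same route as the paper's: both directions rest on the comparison-lemma lifting through the projective resolution, with the $(\Rightarrow)$ direction building the null-homotopy downward from $s^0$ exactly as the paper does via its Lemma 3.1(a), and the $(\Leftarrow)$ direction reading off $g=0$ from the degree-zero component of the homotopy. The only cosmetic difference is that you phrase the last step as ``$H^0$ is well defined on $\Ksf(\proj(\Lambda))$'' where the paper computes $\pi^0_M f^0=\pi^0_M\pi^{-1}_M t=0$ explicitly; these are the same observation.
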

\begin{proof} ($\Rightarrow$) Let $f^\bullet: P^\bullet\to\pr(M)$ be a morphism in $\mathsf{C}(\proj(\Lambda)).$ Then, there exists $f: H^0(P^\bullet)\to M$ such that the following diagram commutes
$$\xymatrix{P^{-1}\ar[r]^{d^{-1}_{P^\bullet}}\ar[d]_{f^{-1}}&P^0\ar[r]^p\ar[d]^{f^0}&H^0(P^\bullet)\ar[r]\ar[d]^f&0\\
P^{-1}(M)\ar[r]_{\pi^{-1}_M}&P^0(M)\ar[r]_{\pi^0_M}&M\ar[r]&0.}$$
Since $\Hom_\Lambda(H^0(P^\bullet),M)=0,$ and thus $\pi^0_Mf^0=0,$ there exists 
$t: P^0\to P^{-1}(M)$ such that $\pi^{-1}_Mt=f^0$. Using that $H^k(\pr(M))=0$ $\forall$ $k<-1,$ it follows from the Lemma \ref{l1, NAIR, 3.4} (a) that $f^\bullet=0$ in 
$\mathsf{K}(\proj(\Lambda))$.

($\Leftarrow$) Let $f: H^0(P^\bullet)\to M$ be a morphism in $\modu(\Lambda).$ Then, we can extend it to a morphism $f^\bullet: P^\bullet\to\pr(M)$ in $\mathsf{C}(\proj(\Lambda))$ such that the following diagram is commutative
$$\xymatrix{\cdots\ar[r]&P^{-1}\ar[r]^{d^{-1}_{P^\bullet}}\ar[d]_{f^{-1}}&P^0\ar[r]^p\ar[d]^{f^0}&H^0(P^\bullet)\ar[r]\ar[d]^f&0\\
\cdots\ar[r]&P^{-1}(M)\ar[r]_{\pi^{-1}_M}&P^0(M)\ar[r]_{\pi^0_M}&M\ar[r]&0.}$$
Since $\Hom_{\mathsf{K}(\proj(\Lambda))}(P^\bullet,\pr(M))=0$, there is $t: P^0\to P^{-1}(M)$ such that $\pi^{-1}_Mt=f^0$ and so $0=\pi^0_M\pi^{-1}_Mt=\pi^0_Mf^0=fp$. Thus $f=0$ since $p$ is an epimorphism.
\end{proof}

\begin{lem}\label{l1. NAIR, 2.2} Let $m\leq k\leq n$ and $P^\bullet\in\mathsf{C}^{[m,n]}(\proj(\Lambda))$ such that $H^i(P^\bullet)=0$ $\forall$ $i\in[k+1, n].$ Then, there is $Q^\bullet\in\mathsf{C}^{[m,k]}(\proj(\Lambda))$ such that $P^\bullet\simeq Q^\bullet$ in $\homo.$
\end{lem}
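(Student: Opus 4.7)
The plan is to induct on $n-k$, at each step peeling off a contractible summand concentrated at the top of the complex in order to shrink its support by one degree.

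If $n=k$, there is nothing to prove: take $Q^\bullet := P^\bullet$. Suppose now that $n>k$. Since $P^{n+1}=0$ and $H^n(P^\bullet)=0$, the differential $d^{n-1}_{P^\bullet}:P^{n-1}\to P^n$ is an epimorphism. Because $P^n\in\proj(\Lambda)$, this epimorphism splits; writing $K:=\Ker(d^{n-1}_{P^\bullet})$, we obtain a decomposition $P^{n-1}=K\oplus P^n$ in $\modu(\Lambda)$ in which $K$ is projective (being a direct summand of the projective module $P^{n-1}$) and, under this identification, $d^{n-1}_{P^\bullet}$ becomes the projection onto $P^n$.

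Define $Q'^\bullet\in\mathsf{C}^{[m,n-1]}(\proj(\Lambda))$ by $Q'^{i}:=P^i$ for $i\leq n-2$, $Q'^{n-1}:=K$, $Q'^{n}:=0$, with differentials obtained from those of $P^\bullet$ (for the differential $d^{n-2}_{P^\bullet}:P^{n-2}\to P^{n-1}=K\oplus P^n$, take the composite with the projection onto $K$; this makes sense since $d^{n-1}_{P^\bullet}\,d^{n-2}_{P^\bullet}=0$ forces $d^{n-2}_{P^\bullet}$ to land in $K$). Let $C^\bullet$ be the contractible complex $(P^n\xrightarrow{\id}P^n)$ concentrated in degrees $n-1$ and $n$. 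Then, using the splitting, we obtain an isomorphism $P^\bullet\cong Q'^\bullet\oplus C^\bullet$ in $\mathsf{C}(\proj(\Lambda))$, and hence $P^\bullet\simeq Q'^\bullet$ in $\homo$ since $C^\bullet$ is null-homotopic.

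Moreover, a straightforward check with the cohomology functor shows $H^i(Q'^\bullet)\simeq H^i(P^\bullet)=0$ for every $i\in[k+1,n-1]$. By the inductive hypothesis applied to $Q'^\bullet$ (with the same $k$ and upper bound $n-1$), there exists $Q^\bullet\in\mathsf{C}^{[m,k]}(\proj(\Lambda))$ with $Q^\bullet\simeq Q'^\bullet\simeq P^\bullet$ in $\homo$, finishing the induction. The only step requiring attention is verifying that the projective module $K=\Ker(d^{n-1}_{P^\bullet})$ together with the induced differential really yields a well-defined subcomplex giving the direct sum decomposition above, but this is immediate from the splitting and from $d^{n-1}_{P^\bullet}\,d^{n-2}_{P^\bullet}=0$.
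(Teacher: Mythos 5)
Your proof is correct and is essentially the paper's own argument: in both cases one uses that $H^{\mathrm{top}}=0$ forces the top differential to be a split epimorphism onto a projective, splits off the contractible two-term summand $(P\xrightarrow{\id}P)$, and iterates; the only difference is that you peel first and then invoke the inductive hypothesis, while the paper runs the reverse induction on $k$ and peels at the end.
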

\begin{proof} We will carry on the proof by reverse induction on $k\in[m,n].$ For $k=n,$ we just take $Q^\bullet:=P^\bullet.$
\

Let $k<n.$ Suppose that the lemma is true for $k,$ and then, we show that it is true for $k-1.$ Let $H^i(P^\bullet)=0$ $\forall$ $i\in[k, n].$ Then, by the inductive hypothesis, there is some $T^\bullet\in\mathsf{C}^{[m,k]}(\proj(\Lambda))$ such that $P^\bullet\simeq T^\bullet$ in $\homo.$ In particular, $H^{k}(T^\bullet)=H^{k}(P^\bullet)=0$ and thus $d_{T^\bullet}^{k-1}:T^{k-1}\to T^k$ is an split-epi. Therefore 
$T^{k-1}=T^k\coprod Q^{k-1},$ $d_{T^\bullet}^{k-1}=\begin{pmatrix} 1_{T^k} & 0\end{pmatrix}$ and  
$d_{T^\bullet}^{k-2}=\begin{pmatrix} 0\\d\end{pmatrix}$ since $d_{T^\bullet}^{k-1}d_{T^\bullet}^{k-2}=0.$ Consider the complex 
$$Q^\bullet:\;\cdots\to T^{k-3}\xrightarrow{d_{T^\bullet}^{k-3}}T^{k-2}\xrightarrow{d}Q^{k-1}\to 0\to\cdots.$$
Then, we have that $Q^\bullet\in\mathsf{C}^{[m,k-1]}(\proj(\Lambda))$ and $T^\bullet=(T^k\xrightarrow{1_{T^k}} T^k)[-k]\coprod Q^\bullet.$
As a consequence, we get that $P^\bullet\simeq Q^\bullet$ in $\homo$ since the cochain complex $(T^k\xrightarrow{1_{T^k}} T^k)[-k]$ is  zero in the homotopic category. 
\end{proof}

 For $M\in\modu(\Lambda),$ we recall that 
$\Gamma_{\!\!M}:=\Lambda/\ann(M).$ Note that $M\in\modu(\Gamma_{\!\!M})$ and then $\gen(M)\subseteq \modu(\Gamma_{\!\!M}).$ Since 
$\modu(\Gamma_{\!\!M})\subseteq\modu(\Lambda),$ for $\X\subseteq \modu(\Gamma_{\!\!M}),$ the notation of the classes $\X^\perp$ and $\X^{\perp_{\tau_n}}$ is ambiguous because it is not clear where they are computed. In order to avoid confusion, we write $\X^{\perp_{\Gamma_{\!\!M}}}$ and $\X^{\perp_{\tau_{n,\Gamma_{\!\!M}}}}$ to emphasize that these orthogonal categories are computed in $\modu(\Gamma_{\!\!M}),$ otherwise it means they are computed in $\modu(\Lambda).$
\

The following result can be understood as a generalization of Lemma 
\ref{l4. NAIR, 2.2}.

\begin{pro}\label{p3. NAIR, 2.2} Let $M$ be sincere in $\modu(\Lambda)$ and $n$-tilting in $\modu(\Gamma_{\!\!M}),$ and let $P^\bullet\in\mathsf{C}^{[-s,0]}(\proj(\Lambda)),$ for $s\geq 0.$ If $P^\bullet\in{}^{\perp_{\geq0}}(\pr(M))$ in 
$\Ksf(\proj(\Lambda)),$ then $P^\bullet=0$ in $\Ksf(\proj(\Lambda)).$
\end{pro}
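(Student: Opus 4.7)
The plan is to proceed by induction on $s\geq 0$. For the base case $s=0,$ $P^\bullet=P^0[0]$ with $P^0\in\proj(\Lambda),$ and by Lemma \ref{LemHsim} the hypothesis with $k=0$ reduces to $\Hom_\Lambda(P^0,M)=0.$ Since $P^0$ is projective, $\Hom_\Lambda(P^0,-)$ is exact, so $\Hom(P^0,X)=0$ for any $X\in\gen(M);$ hence $P^0\in{}^{\perp_0}\gen(M),$ and the sincerity of $M$ together with Lemma \ref{l4. NAIR, 2.2} forces $P^0=0.$

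For the inductive step $s\geq 1,$ the key is to establish $N:=H^0(P^\bullet)=0.$ Granted this, Lemma \ref{l1. NAIR, 2.2} furnishes $Q^\bullet\in\mathsf{C}^{[-s,-1]}(\proj(\Lambda))$ with $P^\bullet\simeq Q^\bullet$ in $\Ksf(\proj(\Lambda)),$ and the shift $Q^\bullet[-1]\in\mathsf{C}^{[-(s-1),0]}(\proj(\Lambda))$ inherits the hypothesis since $\Hom_{\Ksf(\proj(\Lambda))}(Q^\bullet[-1],\pr(M)[k])\simeq\Hom_{\Ksf(\proj(\Lambda))}(P^\bullet,\pr(M)[k+1])=0$ for every $k\geq 0.$ The inductive hypothesis then kills $Q^\bullet[-1],$ and hence $P^\bullet,$ in $\Ksf(\proj(\Lambda)).$

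To prove $N=0,$ my plan is to show $\Hom_\Lambda(N,X)=0$ for every $X\in\gen(M),$ so that sincerity of $M$ via Lemma \ref{l4. NAIR, 2.2} yields $N=0.$ The $n$-tilting hypothesis enters essentially here: Theorem \ref{AR91, 5.5} applied to the Artin algebra $\Gamma_{\!\!M}$ and the tilting module $M$ provides, for every $X\in\modu(\Gamma_{\!\!M})\supseteq\gen(M),$ a coresolution $0\to X\to Y_0\to Y_1\to\cdots\to Y_t\to 0$ with $Y_i\in M^{\perp_{\Gamma_{\!\!M}}}=\gen_n(M).$ Left-exactness of $\Hom_\Lambda(N,-)$ reduces the claim to $\Hom_\Lambda(N,Y)=0$ for every $Y\in\gen_n(M).$

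The main step, and the principal obstacle, is establishing $\Hom_{\Ksf(\proj(\Lambda))}(P^\bullet,\pr(Y))=0$ for $Y\in\gen_n(M),$ from which $\Hom_\Lambda(N,Y)=0$ follows by Lemma \ref{l3. NAIR, 2.2} applied with $M$ replaced by $Y.$ By Bazzoni's characterization (Theorem \ref{Ba04, 3.11}) applied in $\modu(\Gamma_{\!\!M}),$ we have $\gen_k(M)=M^{\perp_{\Gamma_{\!\!M}}}=\gen_n(M)$ for every $k\geq n.$ Choosing $k:=\max\{n,s+1\}$ so that $k>s,$ any $Y\in\gen_n(M)$ fits into an exact sequence $M_k\to M_{k-1}\to\cdots\to M_1\to Y\to 0$ with each $M_i\in\add(M);$ splitting into short exact sequences $0\to K_i\to M_i\to K_{i-1}\to 0$ (with $K_0:=Y$) and applying Proposition \ref{Zi} yields distinguished triangles $\pr(K_i)\to\pr(M_i)\to\pr(K_{i-1})\to\pr(K_i)[1]$ in $\Ksf(\proj(\Lambda)).$ Using $\Hom_{\Ksf(\proj(\Lambda))}(P^\bullet,\pr(M_i)[j])=0$ for $j\geq 0$ (which holds since $M_i\in\add(M)$), iteration of the associated long exact sequences produces an isomorphism $\Hom_{\Ksf(\proj(\Lambda))}(P^\bullet,\pr(Y))\simeq\Hom_{\Ksf(\proj(\Lambda))}(P^\bullet,\pr(K_k)[k]).$ The right-hand side vanishes by a degree count, since $\pr(K_k)[k]$ is supported in degrees $\leq -k<-s$ while $P^\bullet$ is supported in degrees $\geq -s,$ so every chain map between them is degree-wise zero. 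The freedom to take $k$ strictly greater than $s$ is precisely what Bazzoni's theorem, together with the $n$-tilting assumption, affords.
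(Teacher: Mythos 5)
Your proof is correct and follows essentially the same strategy as the paper's: first kill $H^0(P^\bullet)$ by sincerity after proving $\Hom_{\Ksf(\proj(\Lambda))}(P^\bullet,\pr(Y))=0$ for $Y\in M^{\perp_{\Gamma_{\!\!M}}}$ via dimension shifting along $\add(M)$-resolutions until a degree count forces the vanishing, then truncate away the top cohomology and repeat. The only differences are minor: you organize the repetition as an induction on $s$ (absorbing the final projective summand into the base case $s=0$), you obtain the length-$k$ presentations with $k>s$ from Bazzoni's characterization $\gen_k(M)=M^{\perp_{\Gamma_{\!\!M}}}$ rather than from the relative-generator statement of Theorem \ref{AR91, 5.5}(b), and you pass from $\gen(M)$ to $M^{\perp_{\Gamma_{\!\!M}}}$ by left-exactness of $\Hom_\Lambda(H^0(P^\bullet),-)$ on the finite coresolution instead of the paper's special-preenvelope triangle.
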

\begin{proof} We start by proving the following assertions $(i)$ and $(ii).$
\

$(i)$ $\Hom_{\mathsf{K}(\proj(\Lambda))}(\pr,\pr(X))=0\;$ $\forall$ $X\in M^{\perp_{\Gamma_{\!\!M}}}.$\\
Indeed, let $X\in M^{\perp_{\Gamma_{\!\!M}}}.$ Since $M$ is $n$-tilting in 
$\modu(\Gamma_{\!\!M}),$ by Theorem \ref{AR91, 5.5} (b) there is an exact sequence $\cdots\to M_1\overset{f_1}{\to}M_0\overset{f_0}{\to}X\to0$ in $\modu(\Gamma_M),$ where $\{M_j\}_{j=0}^\infty$ in $\add(M).$ In particular, for each 
$k\in\mathbb{N},$ there is the exact sequence $0\to \Ima(f_{k+1})\to M_k\to \Ima(f_k)\to 0$ in $\modu(\Lambda).$ Hence, by Proposition \ref{Zi}, we get the family of distinguished triangles in $\mathsf{K}(\proj(\Lambda))$
\begin{center}
$\{\eta_k: \pr(\mathrm{Im}(f_{k+1}))\to\pr(M_k)\to\pr(\mathrm{Im}(f_k))\to\pr(\mathrm{Im}(f_{k+1}))[1]\}_{k=0}^\infty.$
\end{center}
  Applying $\mathrm{Hom}_{\mathsf{K}(\proj(\Lambda))}(P^\bullet,-)$ to 
  each $\eta_k$ and using that $P^\bullet\in{}^{\perp_{\geq0}}(\pr(M))$ in 
$\Ksf(\proj(\Lambda)),$ we get for all $k\geq 0$ that 
$$\small{\Hom_{\mathsf{K}(\proj(\Lambda))}(P^\bullet,\pr(\mathrm{Im}(f_k))[k])\simeq\Hom_{\mathsf{K}(\proj(\Lambda))}(P^\bullet,\pr(\mathrm{Im}(f_{k+1}))[k+1])}.$$
 Note that $P^h=0$ for $h<-s$ and then $\mathrm{Hom}_{\mathsf{K}(\proj(\Lambda))}(P^\bullet,\pr(\mathrm{Im}(f_{s+1}))[s+1])=0.$ Therefore  $\mathrm{Hom}_{\mathsf{K}(\proj(\Lambda))}(P^\bullet,\pr(X))=0$ since $\Ima(f_0)=X.$
 \

$(ii)$ $H^0(\pr)=0.$\\ 
Indeed,  let $Z\in\gen(M)$. Since $M$ is $n$-tilting in 
$\modu(\Gamma_{\!\!M}),$ we get by Theorem \ref{AR91, 5.5} (a)  that the class $M^{\perp_{\Gamma_{\!\!M}}}$ is special preenveloping. Hence, there is an exact sequence $\eta:\,0\to Z\to Y\to W\to 0$ in $\modu(\Gamma_{\!\!M}),$ where 
$Y\in M^{\perp_{\Gamma_{\!\!M}}}.$  Using that $\eta$ is also an exact sequence in $\modu(\Lambda),$ by Proposition \ref{Zi} we have the distinguished triangle in $\mathsf{K}(\proj(\Lambda))$
\begin{center}
$\overline{\eta}:\;\pr(Z)\to\pr(Y)\to\pr(W)\to\pr(Z)[1].$
\end{center} 
Note that $H^1(P^\bullet)=0$ and then by Lemma \ref{l3. NAIR, 2.2}, 
$\Hom_{\Ksf(\proj(\Lambda))}(P^\bullet,P^\bullet(W)[-1])=0.$ Then, by 
applying $\mathrm{Hom}_{\mathsf{K}(\proj(\Lambda))}(P^\bullet,-)$ to 
$\overline{\eta},$  we have the exact sequence 
$$0\to\mathrm{Hom}_{\mathsf{K}(\proj(\Lambda))}(P^\bullet,\pr(Z))\to\mathrm{Hom}_{\mathsf{K}(\proj(\Lambda))}(P^\bullet,\pr(Y))$$
and then by $(ii)$ $\mathrm{Hom}_{\mathsf{K}(\proj(\Lambda))}(P^\bullet,\pr(Z))=0$ since $Y\in M^{\perp_{\Gamma_{\!\!M}}}.$  Thus by Lemma \ref{l3. NAIR, 2.2} $\mathrm{Hom}_\Lambda(H^0(P^\bullet),Z)=0$ for any $Z\in\gen(M).$ Finally, from Lemma \ref{l4. NAIR, 2.2} we conclude that $H^0(P^\bullet)=0$ since $M$ is sincere.
\

Let us show now that $P^\bullet=0$ in $\homo.$ Indeed, for $(ii),$  $H^0(P^\bullet)=0$ and 
then by  Lemma \ref{l1. NAIR, 2.2} there exists $Q^\bullet_1\in\mathsf{C}^{[-s,-1]}(\proj(\Lambda))$ such that $P^\bullet\simeq Q^\bullet_1$ in $\homo.$ In particular 
$Q^\bullet_1[-1]\in{}^{\perp_{\geq0}}(\pr(M))$ in 
$\Ksf(\proj(\Lambda)).$ Thus, we can apply $(ii)$ to $Q^\bullet_1[-1]$ and then 
by  Lemma \ref{l1. NAIR, 2.2} we get some $Q^\bullet_2\in\mathsf{C}^{[-s,-1]}(\proj(\Lambda))$ such that 
$Q^\bullet_1[-1]\simeq Q^\bullet_2$ in $\homo.$ Therefore $P^\bullet\simeq Q^\bullet_2[1]$ in $\homo,$ with $Q^\bullet_2[1]\in\mathsf{C}^{[-s,-2]}(\proj(\Lambda)).$ Note that we can repeat this procedure and hence we get that 
$P^\bullet\simeq Q^\bullet_s[s-1]$ in $\homo.$ with $Q^\bullet_s[s-1]\in\mathsf{C}^{[-s,-s]}(\proj(\Lambda)).$ Therefore, there is some $Q\in \proj(\Lambda)$ such that $P^\bullet\simeq Q[s]$ in $\homo.$ In particular 
$$0=\Hom_{\Ksf(\proj(\Lambda))}(P^\bullet,\pr(M)[s])\simeq \Hom_{\Ksf(\proj(\Lambda))}(Q[0],\pr(M)).$$
Then, by Lemma \ref{LemHsim} $\Hom_\Lambda(Q,M)=0$ and thus $Q=0$ since $M$ is sincere. Finally, $P^\bullet =0$ in $\homo$ since $Q=0$ in $\modu(\Lambda).$
\end{proof}

\begin{lem}\label{l2NAIR, 2.2} Let $\pr\in\homo$ be a presilting object. Then, for a  family of distinguished triangles $\{\eta_i: Q_i^\bullet\xrightarrow{f_i^\bullet}\pr_i\to Q_{i+1}^\bullet\to Q_i^\bullet[1]\}_{i=1}^n$ in $\homo,$ with $\{\pr_i\}_{i=1}^n\subseteq\add(\pr)$ and $Q^\bullet_{n+1}\in\add(\pr),$ the following statements hold true.
\begin{itemize}
\item[(a)] $Q^\bullet_i\in{}^{\perp_{>0}}\pr\;$ $\forall$ $i\in[1,n].$
\item[(b)] $f_i^\bullet:Q_i^\bullet\to P_i^\bullet$ is an $\add(\pr)$-preenvelope $\;\forall$ $i\in[1,n].$\end{itemize}
\end{lem}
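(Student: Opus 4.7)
The plan is to establish (a) first by reverse induction on $i$, starting from $i=n+1$, and then to derive (b) from a single long exact sequence by invoking (a). Throughout I apply the cohomological functor $\HomK(-,Y)$ to the triangles $\eta_i$ for suitable $Y$, so the argument is entirely formal.

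For (a), the base case $i=n+1$ is immediate: $Q^\bullet_{n+1}\in\add(P^\bullet)$ and $P^\bullet$ is presilting, whence $\HomK(Q^\bullet_{n+1},P^\bullet[k])=0$ for every $k>0$. For the inductive step, assume $Q^\bullet_{i+1}\in{}^{\perp_{>0}}P^\bullet$ and fix $k>0$. Applying $\HomK(-,P^\bullet[k])$ to $\eta_i$ produces the exact piece
\[
\HomK(P^\bullet_i,P^\bullet[k])\to\HomK(Q^\bullet_i,P^\bullet[k])\to\HomK(Q^\bullet_{i+1},P^\bullet[k+1]).
\]
The left-hand term vanishes because $P^\bullet_i\in\add(P^\bullet)$ and $P^\bullet$ is presilting; the right-hand term vanishes by the inductive hypothesis, since $k+1>0$. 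Hence the middle term is zero, closing the induction.

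For (b), fix $i\in[1,n]$ and $X^\bullet\in\add(P^\bullet)$. Applying $\HomK(-,X^\bullet)$ to $\eta_i$ yields the exact sequence
\[
\HomK(P^\bullet_i,X^\bullet)\xrightarrow{-\circ f_i^\bullet}\HomK(Q^\bullet_i,X^\bullet)\to\HomK(Q^\bullet_{i+1},X^\bullet[1]).
\]
It therefore suffices to show that the rightmost term vanishes, for this forces the precomposition map with $f_i^\bullet$ to be surjective, which is exactly the statement that $f_i^\bullet$ is an $\add(P^\bullet)$-preenvelope. Since $X^\bullet[1]$ is a direct summand of a finite coproduct of copies of $P^\bullet[1]$, this reduces to $\HomK(Q^\bullet_{i+1},P^\bullet[1])=0$; when $1\leq i<n$ this is part (a), and when $i=n$ it follows directly from the presilting property applied to $Q^\bullet_{n+1}\in\add(P^\bullet)$.

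There is no genuine obstacle here; the two mild points to keep in mind are that the induction in (a) must run downward, so that $i=n+1$ (which is not formally part of the statement but is supplied by the hypothesis $Q^\bullet_{n+1}\in\add(P^\bullet)$) serves as the base, and that the case $i=n$ of (b) is handled by the presilting hypothesis rather than by (a).
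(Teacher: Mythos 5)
Your proof is correct and takes essentially the same approach as the paper, which merely sketches it: reverse induction on $i$ for (a), starting from the base case $Q^\bullet_{n+1}\in\add(P^\bullet)$, and then applying $\HomK(-,X^\bullet)$ with $X^\bullet\in\add(P^\bullet)$ to each $\eta_i$ together with (a) and the presilting hypothesis for (b). Your write-up simply fills in the details the paper leaves to the reader, including the small but worthwhile observation that the $i=n$ case of (b) relies on $Q^\bullet_{n+1}\in\add(P^\bullet)$ rather than on (a) itself.
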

\begin{proof} (a) It can be shown by using reverse induction on $i\in[1,n].$
\

(b) It follows from (a), by applying the functor $\HomK(-,Q^\bullet),$ with $Q^\bullet\in\add(\pr),$ to each triangle $\eta_i,$ and then using that $\pr$ is presilting in $\homo.$
\end{proof}

\begin{pro}\label{p1p} For $M\in\modu(\Lambda),$ $M'\in\add(M)$ and the distinguished triangle $\eta:\;\Lambda[0]\xrightarrow{f^\bullet}\prn(M')\to Q^\bullet\to\Lambda[1]$ in $\homo,$ the following statements hold true.
\begin{itemize}
\item[(a)] $f^\bullet:\Lambda[0]\to \prn(M')$ is an $\add(\prn(M))$-preenvelope if, and only if, $H^0(f^\bullet): \Lambda\to M'$ is an $\add(M)$-preenvelope.
\item[(b)] If $H^0(f^\bullet): \Lambda\to M'$ is an $\add(M)$-preenvelope and $M$ is $\tau_n$-rigid, then $Q^\bullet\in{}^{\perp_{>0}}\prn(M).$
\end{itemize}
\end{pro}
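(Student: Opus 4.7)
The plan is to exploit the bridge provided by Lemma~\ref{LemHsim}, which naturally identifies $\HomK(\Lambda[0],\prn(X))\simeq\Hom_\Lambda(\Lambda,X)$ via $H^0$ for every $X\in\modu(\Lambda)$, together with the standard comparison theorem for projective $n$-presentations. Two ancillary facts I will use freely are: minimal projective resolutions are additive, so $M''\in\add(M)$ implies $\prn(M'')\in\add(\prn(M))$; and, by Proposition~\ref{p1. NAIR, 2.2} combined with minimality of $\prn$, any $N^\bullet\in\add(\prn(M))$ is isomorphic in $\homo$ to $\prn(H^0(N^\bullet))$ with $H^0(N^\bullet)\in\add(M)$.

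For part (a), both implications unfold from this identification. In $(\Rightarrow)$, given $g\colon\Lambda\to M''$ with $M''\in\add(M)$, I lift $g$ through $\pi^0_{M''}$ to a chain map $g^\bullet\colon\Lambda[0]\to\prn(M'')$ by projectivity of $\Lambda$, apply the preenveloping hypothesis on $f^\bullet$ to obtain $h^\bullet\colon\prn(M')\to\prn(M'')$ with $h^\bullet\circ f^\bullet=g^\bullet$ in $\homo$, and then take $H^0(h^\bullet)\colon M'\to M''$ as the required factorisation of $g$. For $(\Leftarrow)$, after replacing $N^\bullet$ by $\prn(M'')$ with $M'':=H^0(N^\bullet)\in\add(M)$, I use the $\add(M)$-preenvelope property of $H^0(f^\bullet)$ to write $H^0(g^\bullet)=k\circ H^0(f^\bullet)$ for some $k\colon M'\to M''$, lift $k$ to a chain map $h^\bullet\colon\prn(M')\to\prn(M'')$ via the comparison theorem, and invoke Lemma~\ref{LemHsim} to conclude $h^\bullet\circ f^\bullet=g^\bullet$ in $\homo$ from the equality of their cohomologies in degree zero.

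For part (b), I apply $\HomK(-,\prn(M)[k])$ to the triangle $\eta$ and chase the long exact sequence, aiming to show $\HomK(Q^\bullet,\prn(M)[k])=0$ for every $k\geq 1$. By Theorem~\ref{NAIR, 3.4} together with $\tau_n$-rigidity of $M$, the complex $\prn(M)$ is presilting, so $\HomK(\prn(M'),\prn(M)[k])=0$ for all $k\geq 1$ since $M'\in\add(M)$. For $k\geq 2$ the flanking term $\HomK(\Lambda[0],\prn(M)[k-1])$ vanishes as well by Lemma~\ref{LemHsim}, because $H^{k-1}(\prn(M))=0$ whenever $k-1\geq 1$, squeezing the middle term to zero. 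The case $k=1$ is the crux, since $\HomK(\Lambda[0],\prn(M))\simeq M$ is generically nonzero, so vanishing of $\HomK(Q^\bullet,\prn(M)[1])$ reduces to the surjectivity of the map $\alpha\colon\HomK(\prn(M'),\prn(M))\to\HomK(\Lambda[0],\prn(M))$ given by pre-composition with $f^\bullet$.

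The main obstacle is precisely this surjectivity in the case $k=1$. Unpacked through Lemma~\ref{LemHsim}, it becomes the assertion that every $h\in\Hom_\Lambda(\Lambda,M)$ has the form $H^0(g^\bullet)\circ H^0(f^\bullet)$ for some $g^\bullet\colon\prn(M')\to\prn(M)$. Since $M\in\add(M)$, the $\add(M)$-preenvelope hypothesis on $H^0(f^\bullet)$ yields $k\colon M'\to M$ with $h=k\circ H^0(f^\bullet)$, and the comparison lemma lifts $k$ to a chain map $g^\bullet\colon\prn(M')\to\prn(M)$ with $H^0(g^\bullet)=k$—the same lifting step performed in part (a), relying on exactness of $\prn(M)$ in degrees $[-n+1,-1]$ and projectivity of each $P^i(M')$. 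This closes the long exact sequence argument and completes the proof.
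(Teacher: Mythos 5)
Your proposal is correct and follows essentially the same route as the paper's proof: part (a) via lifting through $H^0$ using Lemma \ref{LemHsim} and the comparison lemma, and part (b) by applying $\HomK(-,\prn(M)[k])$ to $\eta$ and using presilting of $\prn(M)$ (Theorem \ref{NAIR, 3.4}) plus the vanishing $\HomK(\Lambda[0],\prn(M)[k-1])=0$ for $k\geq 2$. The only cosmetic difference is that for the $k=1$ case you re-derive the surjectivity of precomposition with $f^\bullet$ directly at the module level, whereas the paper simply cites part (a); these are the same argument.
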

\begin{proof}(a) $(\Rightarrow)$ Let $g: \Lambda\to M''$ be in $\modu(\Lambda),$ with $M''\in\add(M).$ By Lemma \ref{LemHsim} there is $g^\bullet: \Lambda[0]\to\prn(M'')$ in $\homo$ such that $H^0(g^\bullet)=g.$ Since $f^\bullet$ is an $\add(\prn(M))$-preenvelope, there is $k^\bullet: \prn(M')\to\prn(M'')$ in $\homo$ such that $g^\bullet=k^\bullet f^\bullet$ and thus $g=H^0(k^\bullet)H^0(f^\bullet).$ 

$(\Leftarrow)$ Let $g^\bullet: \Lambda[0]\to\prn(N)$ in $\homo,$ with $N\in\add(M).$ Using that $H^0(f^\bullet): \Lambda\to M'$ is an $\add(M)$-preenvelope, there is $k: M'\to N$ in $\modu(\Lambda)$ such that $H^0(g^\bullet)=k\,H^0(f^\bullet).$ By the comparison lemma in homological algebra, there is a morphism $\overline{k^\bullet}: \pr(M')\to\pr(N)$ of cochain complexes such that $H^0(\overline{k^\bullet})=k.$ 
Note that $\overline{k^\bullet}$ can be restricted to a morphism $k^\bullet: \prn(M')\to\prn(N)$ of cochain complexes with $H^0(k^\bullet)=k.$ Then
$H^0(g^\bullet)=H^0(k^\bullet)H^0(f^\bullet),$ and thus by Lemma \ref{LemHsim}, we get that $g^\bullet=k^\bullet f^\bullet$ in $\homo.$ 
\

(b) Let $i\geq 0.$ Applying {\small $(-,\prn(M)[i]):=\HomK(-,\prn(M)[i])$} to $\eta,$ and using that $(\prn(M'),\prn(M)[i+1])=0$ (see Theorem \ref{NAIR, 3.4}), 
we get the following exact sequence\\
 {\small $(*)(\prn(M'),\prn(M)[i])\xrightarrow{(f^\bullet,\prn(M)[i])}(\Lambda[0],\prn(M)[i])\twoheadrightarrow(Q^\bullet,\prn(M)[i+1]).$ }
The item (a) and $i=0$ in $(*)$ give us that $\HomK(Q^\bullet,\prn(M)[1])=0.$
\

Let $i\geq 1.$ Then $H^i(\prn(M))=0$ and thus from Lemma $\ref{LemHsim},$ it follows that  $\HomK(\Lambda[0],\prn(M)[i])=0.$ Therefore, from $(*)$ we conclude that 
$\HomK(Q^\bullet,\prn(M)[i+1])=0.$ 
Hence $Q^\bullet\in{}^{\perp_{>0}}\prn(M).$
\end{proof}

\begin{lem}\label{l3NAIR, 2.2} Let $M\in\modu(\Lambda)$ be such that $M\in M^\perp.$ Then, for a family  $\{\eta_i: 0\to N_i\xrightarrow{f_i}M_i\to N_{i+1}\to0\}_{i=1}^n$ of exact sequences in $\modu(\Lambda),$  with  $\{M_i\}_{i=1}^n$ and 
$N_{n+1}$ in  $\add(M),$ the following statements hold true.
\begin{itemize}
\item[(a)] $N_i\in{}^\perp M$ $\forall$ $i\in[1,n].$
\item[(b)] $f_i:N_i\to M_i$ is an $\add(M)$-preenvelope $\forall$ $i\in[1,n].$\end{itemize}
\end{lem}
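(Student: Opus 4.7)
The plan is to mirror the proof of Lemma \ref{l2NAIR, 2.2} at the module level, using the hypothesis $M\in M^\perp$ (which says $\Ext^j_\Lambda(M,M)=0$ for all $j\geq 1$) in place of the presilting hypothesis.

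For part (a), I would proceed by reverse induction on $i\in[1,n]$. For the base case $i=n$, apply $\Hom_\Lambda(-,M)$ to the exact sequence $\eta_n\colon 0\to N_n\to M_n\to N_{n+1}\to 0$ and use the resulting long exact $\Ext$-sequence; since $M_n$ and $N_{n+1}$ both lie in $\add(M)$ and $\add(M)\subseteq M^\perp$, both $\Ext^j_\Lambda(M_n,M)$ and $\Ext^{j+1}_\Lambda(N_{n+1},M)$ vanish for every $j\geq 1$, forcing $\Ext^j_\Lambda(N_n,M)=0$ and therefore $N_n\in{}^\perp M$. For the inductive step, assume $N_{i+1}\in{}^\perp M$; applying $\Hom_\Lambda(-,M)$ to $\eta_i$ and using $M_i\in\add(M)\subseteq M^\perp$ together with the inductive hypothesis on $N_{i+1}$ yields $N_i\in{}^\perp M$ by the same dimension-shifting argument.

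For part (b), fix $i\in[1,n]$. Either $i<n$, in which case part (a) gives $N_{i+1}\in{}^\perp M$ and in particular $\Ext^1_\Lambda(N_{i+1},M)=0$, or $i=n$, in which case $N_{n+1}\in\add(M)\subseteq M^\perp$, again giving $\Ext^1_\Lambda(N_{n+1},M)=0$. In either case, applying $\Hom_\Lambda(-,M)$ to $\eta_i$ yields the exact sequence
\begin{center}
$\Hom_\Lambda(M_i,M)\xrightarrow{\Hom_\Lambda(f_i,M)}\Hom_\Lambda(N_i,M)\to\Ext^1_\Lambda(N_{i+1},M)=0,$
\end{center}
so $\Hom_\Lambda(f_i,M)$ is surjective. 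By additivity of $\Hom_\Lambda(f_i,-)$ and its compatibility with direct summands, $\Hom_\Lambda(f_i,M')$ is surjective for every $M'\in\add(M)$, which is exactly the statement that $f_i\colon N_i\to M_i$ is an $\add(M)$-preenvelope.

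There is no serious obstacle here; the argument is essentially the module-theoretic shadow of Lemma \ref{l2NAIR, 2.2}. The only point requiring a little care is keeping track of which hypothesis supplies the vanishing $\Ext^{j+1}_\Lambda(N_{i+1},M)=0$ at each stage of the induction in (a) — namely, the base case uses $N_{n+1}\in\add(M)\subseteq M^\perp$ directly, while the inductive step uses the previously established $N_{i+1}\in{}^\perp M$.
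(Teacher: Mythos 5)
Your proof is correct and follows the same route the paper sketches: reverse induction with dimension shifting along the long exact $\Ext$-sequence for (a), and surjectivity of $\Hom_\Lambda(f_i,-)$ on $\add(M)$ from the vanishing of $\Ext^1_\Lambda(N_{i+1},-)$ for (b). Your care in distinguishing where the vanishing $\Ext^{j+1}_\Lambda(N_{i+1},M)=0$ comes from (the hypothesis $N_{n+1}\in\add(M)$ in the base case versus part (a) in the inductive step) fills in exactly the details the paper leaves to the reader.
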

\begin{proof}
(a) It can be shown by reverse induction on $i\in[1,n].$

(b) It follows from (a), by applying the functor $\Hom_\Lambda(-,M'),$ with $M'\in\add(M),$ to each $\eta_i,$ and then using that $M\in M^\perp.$ 
\end{proof}

\begin{lem}\label{l4NAIR, 2.2} Let $n\in\mathbb{N}^+,$ $\pr\in\mathsf{C}^{[-n,0]}(\proj(\Lambda)),$ $Q^\bullet\in\Csf^{\leq 0}(\proj(\Lambda))$ such that $H^i(Q^\bullet)=0$ $\forall$ $i\in(-n,0),$ and let $f: H^0(\pr)\to H^0(Q^\bullet)$ be in $\modu(\Lambda).$ Then there is a morphism $f^\bullet: \pr\to Q^\bullet$ of cochain complexes such that $H^0(f^\bullet)=f.$
\end{lem}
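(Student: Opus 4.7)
The plan is the standard comparison/lifting argument from homological algebra, carried out by descending induction on the cohomological degree: build $f^0$ from the lifting property of $P^0$ along the surjection $\pi_Q\colon Q^0\twoheadrightarrow H^0(Q^\bullet)$, then use projectivity of $P^{-k}$ and the exactness of $Q^\bullet$ at degree $-k+1$ to propagate the construction all the way down to $f^{-n}$.

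Concretely, let $\pi_P\colon P^0\twoheadrightarrow H^0(P^\bullet)$ and $\pi_Q\colon Q^0\twoheadrightarrow H^0(Q^\bullet)$ be the canonical projections (the latter makes sense because $Q^\bullet$ vanishes in positive degrees). Projectivity of $P^0$ lifts $f\pi_P$ along $\pi_Q$ to a morphism $f^0\colon P^0\to Q^0$ with $\pi_Q f^0=f\pi_P$; hence $f^0 d^{-1}_{P^\bullet}$ factors through $\Ker(\pi_Q)=\Ima(d^{-1}_{Q^\bullet})$. For the inductive step $1\leq k\leq n$, assume $f^{-k+1}$ has been built together with the information that $f^{-k+1}d^{-k}_{P^\bullet}$ factors through $\Ker(d^{-k+1}_{Q^\bullet})$ (read as $\Ker(\pi_Q)$ when $k=1$). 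The hypothesis $H^{-k+1}(Q^\bullet)=0$ for $k\in[2,n]$ (valid since $-k+1\in(-n,0)$), together with the definition of $H^0(Q^\bullet)$ when $k=1$, gives $\Ker(d^{-k+1}_{Q^\bullet})=\Ima(d^{-k}_{Q^\bullet})$. Projectivity of $P^{-k}$ and the surjection $Q^{-k}\twoheadrightarrow\Ima(d^{-k}_{Q^\bullet})$ then yield $f^{-k}\colon P^{-k}\to Q^{-k}$ with $d^{-k}_{Q^\bullet}f^{-k}=f^{-k+1}d^{-k}_{P^\bullet}$, and the hypothesis needed for the next step follows from $d^{-k}_{Q^\bullet}(f^{-k}d^{-k-1}_{P^\bullet})=f^{-k+1}d^{-k}_{P^\bullet}d^{-k-1}_{P^\bullet}=0$.

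Extending by $f^i:=0$ for $i\notin[-n,0]$ assembles the $f^{-k}$ into a chain map $f^\bullet\colon P^\bullet\to Q^\bullet$, and $H^0(f^\bullet)=f$ follows from $\pi_Q f^0=f\pi_P$. No real obstacle appears; the only point worth monitoring is that every appeal to the vanishing of some cohomology group of $Q^\bullet$ falls within the range $(-n,0)$ dictated by the hypothesis, which is guaranteed by the bound $k\leq n$ (the last step, $k=n$, uses exactly $H^{-n+1}(Q^\bullet)=0$).
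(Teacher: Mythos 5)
Your argument is correct and is essentially the paper's proof: the paper likewise lifts $f\pi_P$ along $Q^0\twoheadrightarrow H^0(Q^\bullet)$ to get $f^0$, builds $f^{-1}$ from projectivity of $P^{-1}$ and $\Ker(q)=\Ima(d^{-1}_{Q^\bullet})$, and then invokes its Lemma 3.1(b) — whose proof is exactly your descending induction using $H^i(Q^\bullet)=0$ for $i\in(-n,0)$ — to extend down to degree $-n$, finishing with the extension by zero since $P^i=0$ for $i<-n$. Your index bookkeeping (the last step $k=n$ consuming $H^{-n+1}(Q^\bullet)=0$) matches the paper's use of that lemma with $m=-1$.
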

\begin{proof} Let $p:=\Coker(d^{-1}_{P^\bullet}):P^0\to H^0(\pr)$ and $q:=\Coker(d^{-1}_{Q^\bullet}): Q^0\to H^0(Q^\bullet).$ Since $P^0\in\proj( \Lambda)$ and $q$ is an epimorphism, there exists $f^0: P^0\to Q^0$ such that $qf^0=fp.$ Using that $P^{-1}$ is projective, $\Ima(d^{-1}_{Q^\bullet})=\Ker(q)$ and 
$p\,d^{-1}_{P^\bullet}=0,$ we get the commutative diagram in $\modu(\Lambda)$
$$\xymatrix{P^{-1}\ar[r]^{d^{-1}_{P^\bullet}}\ar[d]_{f^{-1}}&P^0\ar[d]_{f^0}\ar[r]^p&H^0(\pr)\ar[r]\ar[d]^f&0\\
Q^{-1}\ar[r]_{d^{-1}_{Q^\bullet}}&Q^0\ar[r]_q&H^0(Q^\bullet)\ar[r]&0.}$$
Thus we have a morphism $\overline{f}^\bullet: \pr_{\geq-1}\to Q^\bullet_{\geq-1}$ of cochain complexes such that $H^0(\overline{f}^\bullet)=f.$ Since $H^i(Q^\bullet)=0$ $\forall$ $i\in(-n,0),$ by Lemma \ref{l1, NAIR, 3.4} (b), the morphism 
$\overline{f}^\bullet$ can be extended to a morphism $\overline{\overline{f}}^\bullet: \pr_{\geq-n}\to Q^\bullet_{\geq-n}$ of cochain complexes. Finally, since 
$P^i=0$ $\forall\, i<-n,$ the above morphism extends to a morphism $f^\bullet: \pr\to Q^\bullet$ of cochain complexes and $H^0(f^\bullet)=f.$
\end{proof}

\begin{lem}\label{l5NAIR, 2.2} For $M\in\modu(\Lambda)$ and 
$f^\bullet: \pr\to Q^\bullet$ in $\Ksf^{\leq 0}(\proj(\Lambda)),$ with $H^0(Q^\bullet)\in\add(M),$ the following statements are equivalent.
\begin{itemize}
\item[(a)] $H^0(f^\bullet):H^0(\pr)\to H^0(Q^\bullet)$ is an $\add(M)$-preenvelope.
\item[(b)] Any morphism $g^\bullet:P^\bullet\to P^\bullet(M')$ in $\Ksf(\proj(\Lambda)),$ 
with  $M'\in\add(M),$ 
factors through $f^\bullet: \pr\to Q^\bullet.$
\end{itemize}
\end{lem}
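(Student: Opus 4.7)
The plan is to prove the equivalence by going back and forth between $\modu(\Lambda)$ and $\Ksf(\proj(\Lambda))$ through the cohomology functor $H^0$, with Lemma \ref{l4NAIR, 2.2} serving as the bridge that lifts module morphisms to morphisms of complexes. The key observation is that for every $M'\in\add(M)$ the complex $P^\bullet(M')$ is a projective resolution, so $H^i(P^\bullet(M'))=0$ for all $i<0$; hence the vanishing hypothesis on the target in Lemma \ref{l4NAIR, 2.2} is trivially satisfied whenever $P^\bullet(M')$ plays the role of target.

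For the direction (b)$\Rightarrow$(a), I take an arbitrary morphism $h: H^0(P^\bullet)\to M'$ with $M'\in\add(M)$. By Lemma \ref{l4NAIR, 2.2} applied with source $P^\bullet$ and target $P^\bullet(M')$, I lift $h$ to a morphism of complexes $g^\bullet: P^\bullet\to P^\bullet(M')$ with $H^0(g^\bullet)=h$. Hypothesis (b) then factors $g^\bullet=k^\bullet f^\bullet$ in $\Ksf(\proj(\Lambda))$ for some $k^\bullet: Q^\bullet\to P^\bullet(M')$. Applying $H^0$ to this factorization yields $h=H^0(k^\bullet)\cdot H^0(f^\bullet)$, which exhibits $H^0(f^\bullet)$ as an $\add(M)$-preenvelope.

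For (a)$\Rightarrow$(b), I take $g^\bullet: P^\bullet\to P^\bullet(M')$ in $\Ksf(\proj(\Lambda))$ with $M'\in\add(M)$. Then $H^0(g^\bullet): H^0(P^\bullet)\to M'$ factors through $H^0(f^\bullet)$ by (a), giving $h: H^0(Q^\bullet)\to M'$ with $H^0(g^\bullet)=h\cdot H^0(f^\bullet)$. A second application of Lemma \ref{l4NAIR, 2.2} lifts $h$ to $h^\bullet: Q^\bullet\to P^\bullet(M')$. The candidate factorization is then $h^\bullet f^\bullet$, and the difference $\psi^\bullet:=g^\bullet-h^\bullet f^\bullet: P^\bullet\to P^\bullet(M')$ satisfies $H^0(\psi^\bullet)=0$; the task reduces to proving $\psi^\bullet=0$ in $\Ksf(\proj(\Lambda))$.

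The main obstacle is precisely this last step: showing that a morphism $\psi^\bullet: P^\bullet\to P^\bullet(M')$ with vanishing $H^0$ is null-homotopic. This is the uniqueness part of the standard comparison lemma and can be handled by constructing a homotopy one degree at a time. From $H^0(\psi^\bullet)=0$ one first deduces that $\pi^0_{M'}\psi^0=0$, so $\psi^0$ lifts through $\pi^{-1}_{M'}$ using the projectivity of $P^0$, producing $s^0: P^0\to P^{-1}(M')$ with $\pi^{-1}_{M'}s^0=\psi^0$. The same mechanism, iterated downward using the projectivity of each $P^{-i}$ together with exactness of $P^\bullet(M')$ in negative degrees, assembles a null-homotopy of $\psi^\bullet$, mirroring the technique already employed in the proofs of Lemma \ref{l1, NAIR, 3.4} (a) and Lemma \ref{l3. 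NAIR, 2.2}.
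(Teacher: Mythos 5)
Your proposal is correct and follows essentially the same route as the paper: lift module maps to chain maps into the projective resolution $P^\bullet(M')$ via the comparison lemma (the paper cites this directly where you cite Lemma \ref{l4NAIR, 2.2}, whose degree-by-degree construction extends verbatim to the unbounded-below case), apply $H^0$ for one direction, and use uniqueness of the lift up to homotopy for the other. The only difference is cosmetic: you spell out the null-homotopy of $\psi^\bullet=g^\bullet-h^\bullet f^\bullet$ explicitly, whereas the paper absorbs this into its appeal to ``the comparison lemma in homological algebra.''
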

\begin{proof} (a) $\Rightarrow$ (b) Let $M'\in\add(M)$ and 
$g^\bullet: \pr\to\pr(M')$ be in $\Ksf(\proj(\Lambda)).$ Consider the morphism $g:=H^0(g^\bullet): H^0(\pr)\to M'.$ Since $H^0(f^\bullet)$ is an $\add(M)$-preenvelope, there is $h: H^0(Q^\bullet)\to M'$ such that $g=hH^0(f^\bullet).$ By the comparison lemma in homological algebra, there is a morphism $h^\bullet: Q^\bullet\to\pr(M')$ of cochain complexes such that $H^0(h^\bullet)=h$ and 
therefore $g^\bullet=h^\bullet f^\bullet$  in $\Ksf(\proj(\Lambda)).$ 
\

(b) $\Rightarrow$ (a) Let $M'\in\add(M)$ and $g: H^0(\pr)\to M'$ in 
$\modu(\Lambda).$ By the comparison lemma in homological algebra, there is a morphism $g^\bullet: \pr\to\pr(M')$ of cochain complexes such that 
$H^0(g^\bullet)=g.$ By hypothesis,  there exists a morphism 
$h^\bullet: Q^\bullet\to\pr(M')$ of cochain complexes such that 
$g^\bullet=h^\bullet f^\bullet$ in $\Ksf(\proj(\Lambda)).$ Therefore 
$g=H^0(h^\bullet)H^0(f^\bullet).$ 
\end{proof}

\begin{teo}\label{NAIR, 2.2} For $M\in\modu(\Lambda),$ the following statements hold true.
\begin{itemize}
\item[(a)] If $M$ is $\tau_{n,m}$-tilting in $\modu(\Lambda),$ then $M$ is $m$-tilting in $\modu(\Gamma_{\!\!M}).$
\item[(b)] Let $n\leq 2.$ If $M$ is $\tau_n$-rigid and sincere in $\modu(\Lambda)$ and $n$-tilting in $\modu(\Gamma_{\!\!M}),$ then $M$ is $\tau_{n,n}$-tilting in $\modu(\Lambda).$
\end{itemize} 
\end{teo}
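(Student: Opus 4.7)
The goal is to verify the three Miyashita conditions for $M$ to be $m$-tilting over $\Gamma_{\!\!M}.$ Since $\Gamma_{\!\!M}=\Lambda/\ann(M),$ the module $M$ is automatically faithful in $\modu(\Gamma_{\!\!M}),$ and it is $\tau_m$-rigid there by hypothesis; Lemma \ref{l5. NAIR, 2.2} applied inside $\modu(\Gamma_{\!\!M})$ therefore yields at once both $\pd_{\Gamma_{\!\!M}}(M)\leq m$ and $M\in M^{\perp_{\Gamma_{\!\!M}}}.$ For the remaining Miyashita condition --- a finite $\add(M)$-coresolution $0\to\Gamma_{\!\!M}\to T_0\to\cdots\to T_m\to 0$ --- I would exploit that $\prn(M)$ is silting in $\homo$ and iteratively form $\add(\prn(M))$-preenvelopes starting from $\Lambda[0],$ obtaining a chain of distinguished triangles $Q_i^\bullet\to\prn(M_i)\to Q_{i+1}^\bullet\to Q_i^\bullet[1]$ with $Q_0^\bullet=\Lambda[0]$ and with each $Q_i^\bullet\in{}^{\perp_{>0}}\prn(M)$ by Lemma \ref{l2NAIR, 2.2}; silting guarantees termination after finitely many steps.

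\textbf{Translating to modules.} Applying $H^0$ to each triangle and invoking Proposition \ref{p1p}(a), the resulting component maps are $\add(M)$-preenvelopes in $\modu(\Lambda);$ because any $\Lambda$-homomorphism into $\add(M)$ factors through the canonical surjection $\Lambda\twoheadrightarrow \Gamma_{\!\!M},$ the resulting sequence actually lives in $\modu(\Gamma_{\!\!M}),$ and the initial map $\Gamma_{\!\!M}\to M_0$ is injective by faithfulness of $M$ over $\Gamma_{\!\!M}.$ A standard $\Ext$-shift against $M,$ using the facts $\Gamma_{\!\!M}\in\proj(\Gamma_{\!\!M})$ and $M\in M^{\perp_{\Gamma_{\!\!M}}},$ shows inductively that every cokernel $K_i$ lies in $M^{\perp_{\Gamma_{\!\!M}}};$ combining this with $\pd_{\Gamma_{\!\!M}}(M)\leq m$ then forces the coresolution to terminate at length $m$ with $K_m\in\add(M).$

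\textbf{Plan for (b).} Fix $n\leq 2$ under the given hypotheses. First, that $M$ is $\tau_n$-rigid in $\modu(\Gamma_{\!\!M})$ follows because the $n$-tilting hypothesis delivers both $\pd_{\Gamma_{\!\!M}}(M)\leq n$ and $M\in M^{\perp_{\Gamma_{\!\!M}}},$ whence Corollary \ref{CoroNAIR, 3.4}(b) (applied in $\modu(\Gamma_{\!\!M})$) gives $M^{\perp_{\tau_n,\Gamma_{\!\!M}}}=M^{\perp_{\Gamma_{\!\!M}}},$ so $M\in M^{\perp_{\tau_n,\Gamma_{\!\!M}}}.$ The substantive claim is that $\prn(M)$ is silting in $\homo.$ Presilting comes from Theorem \ref{NAIR, 3.4}, so the task is to prove $\thick(\prn(M))=\homo.$ For $n=1$ this reduces to Jasso's classical correspondence (\cite{Ja15}): a sincere $\tau$-rigid module that is $1$-tilting over $\Gamma_{\!\!M}$ is $\tau$-tilting over $\Lambda.$ For $n=2$ I would argue by taking an $X^\bullet\in\homo$ in the full $\mathbb{Z}$-graded right orthogonal of $\prn(M),$ shifting it into $\Csf^{[-s,0]}(\proj(\Lambda)),$ and applying Lemma \ref{l 2.1.1} together with Lemma \ref{lemtrunc} to convert orthogonality against $\prn(M)$ into orthogonality against the full resolution $\pr(M);$ then Proposition \ref{p3. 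NAIR, 2.2}, which uses precisely that $M$ is sincere and $n$-tilting over $\Gamma_{\!\!M},$ forces $X^\bullet=0.$

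\textbf{Main obstacle.} The principal difficulty is Part (b) for $n=2.$ Promoting presilting to silting requires transferring an orthogonality statement about the truncated complex $\prn(M)$ to one about the full resolution $\pr(M)$ along the triangle $\pr(M)_{\leq-n-1}\to\pr(M)\to\prn(M)\to\pr(M)_{\leq-n-1}[1];$ the bookkeeping of the intermediate cohomologies $H^i(\prn(M))$ for $-n<i<0$ is only tractable when $n\leq 2,$ which is exactly why the authors leave the general case as an open conjecture at the end of the paper.
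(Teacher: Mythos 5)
Your part (a) follows the paper's own route: Lemma \ref{l5. NAIR, 2.2} over $\Gamma_{\!\!M}$ gives $\pd_{\Gamma_{\!\!M}}(M)\leq m$ and $M\in M^{\perp_{\Gamma_{\!\!M}}}$, and the finite $\add(\prn(M))$-coresolution of $\Lambda[0]$ (whose existence the paper extracts from Lemma \ref{lNAIR, 2.2} plus \cite[Prop.~3.9]{We13} rather than a bare ``silting implies termination'') is pushed down by $H^0$ and Proposition \ref{p1p}(a) exactly as you describe. Your $n=1$ case of (b) is a legitimate shortcut differing from the paper's uniform construction: sincerity gives $rk(\Gamma_{\!\!M})=rk(\Lambda)$ (Lemma \ref{LbAnn}), tilting over $\Gamma_{\!\!M}$ gives $rk(M)=rk(\Gamma_{\!\!M})$, so $M$ is $\tau$-rigid with $rk(M)=rk(\Lambda)$, hence $\tau$-tilting, hence $P^\bullet_{\geq-1}(M)$ is silting by \cite{AIR14}.

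The genuine gap is (b) for $n=2$. You reduce ``$\thick(\prn(M))=\homo$'' to ``the full $\mathbb{Z}$-graded orthogonal of $\prn(M)$ inside $\homo$ vanishes,'' but that implication is not available: $\homo$ has no Serre functor when $\gldim(\Lambda)=\infty$ (and Example \ref{ExImp} shows the theorem must cover $\pd_\Lambda(M)=\infty$), so vanishing of an orthogonal does not detect $\thick$-generation; whether a presilting object with $rk(\Lambda)$ summands generates is precisely the delicate point, not something orthogonality settles. The cited lemmas also do not perform the conversion you describe: Proposition \ref{p3. NAIR, 2.2} is a statement about the \emph{left} orthogonal ${}^{\perp_{\geq0}}(\pr(M))$ of complexes concentrated in degrees $[-s,0]$, whereas you start from the \emph{right} orthogonal of $\prn(M)$; and Lemma \ref{lemtrunc} removes the truncation in the second variable only when the first variable is already truncated, so it cannot turn $\Hom(X^\bullet,\prn(M)[i])=0$ into $\Hom(X^\bullet,\pr(M)[i])=0$ --- the error terms $\Hom(X^\bullet,\pr(M)_{\leq-n-1}[i])$ survive. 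The paper's actual argument is constructive: it lifts the coresolution $0\to\Gamma_{\!\!M}\to M_0\to M_1\to M_2\to0$ to triangles $\eta_0,\eta_1$ in $\homo$, compares the resulting cone $P^\bullet_2$ with $P^\bullet_{\geq-2}(M_2)$ via a map $g^\bullet$ with $H^0(g^\bullet)=1_{M_2}$, and applies Proposition \ref{p3. NAIR, 2.2} only to $\Cone(g^\bullet)$, an explicit complex homotopic to one concentrated in degrees $[-3,-2]$ which is shown to lie in ${}^{\perp_{\geq-2}}(\pr(M))$; this yields $\coresdim_{\add(\prn(M))}(\Lambda[0])\leq 2$ and hence silting via \cite{We13}. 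Some such explicit generation argument is needed in place of your orthogonality criterion.
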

\begin{proof} We consider the map $p:\Lambda\to \Gamma_{\!\!M},\;\lambda\mapsto \lambda+\ann(M).$ Note that, for $M'\in\add(M),$ we have that $\ann(M)\subseteq\Ker(\varphi)$ for any $\varphi\in\Hom_\Lambda(\Lambda, M').$
On the other hand, by Lemma \ref{lNAIR, 2.2} we have that $\resdim_{\add(\Lambda[0])}(\prn(M))\leq n$ in $\homo.$
\

(a) Let $M$ be $\tau_{n,m}$-tilting in $\modu(\Lambda).$ Since 
$M$ is faithful and $\tau_m$-rigid in $\modu(\Gamma_{\!\!M}),$ by  
Lemma \ref{l5. NAIR, 2.2}, we get that $\pd_{\Gamma_{\!\!M}}(M)\leq m$ and 
$M\in M^{\perp_{\Gamma_{\!\!M}}}.$ Thus, in order to prove that $M$ is $m$-tilting in $\modu(\Gamma_{\!\!M}),$ it is enough to show that $\coresdim_{\add(M)}(\Gamma_{\!\!M})<\infty$ in  $\modu(\Gamma_{\!\!M}).$
\

Since $\resdim_{\add(\Lambda[0])}(\prn(M))\leq n$ in $\homo$ and $\prn(M)$ is silting in $\homo,$ from \cite[Proposition 3.9 and Remark 3.3]{We13} we get that\\ 
$\coresdim_{\add(\prn(M))}(\Lambda[0])\leq n.$ Therefore, 
there is a family 
\begin{center}
$\{\eta_i: P^\bullet_i\overset{f_i^\bullet}{\to}\prn(M_i)\to P^\bullet_{i+1}\to P^\bullet_i[1]\}_{i=0}^{n-1},$
\end{center}
of distinguished triangles  in $\homo,$ with $P^\bullet_0:=\Lambda[0]$, $P^\bullet_n:=\prn(M_n)$ and $\{M_i\}_{i=0}^n\subseteq\add(M).$ Applying Lemma \ref{l6. NAIR, 2.2} (c) to the above family of distinguished triangles, we obtain the exact sequence $\Lambda\xrightarrow{H^0(f^\bullet_0)}M_0\to\cdots\to M_n\to0$ in $\modu(\Lambda).$ Then, by Lemma \ref{l2NAIR, 2.2} (b) and Proposition \ref{p1p} (a), we have that $H^0(f^\bullet_0): \Lambda\to M_0$ is an $\add(M)$-preenvelope. On the other hand, there is some $h: \Gamma_M\to M_0$ such that $hp=H^0(f^\bullet_0).$ Now, from the fact that $M$ is faithful in $\modu(\Gamma_{\!\!M}),$ there is a monomorphism $t:\Gamma_{\!\!M}\to M'$ in $\modu(\Lambda),$ with $M'\in\add(M).$ Hence, using that $H^0(f^\bullet_0)$ is an $\add(M)$-preenvelope, there exists $h':M_0\to M'$ such that $tp=h'H^0(f^\bullet_0)=h'hp,$ and thus 
$h'h=t$  since $p$ is an epimorphism. Now, $h: \Gamma_M\to M_0$ is a monomorphism since $t$ is so. Finally, using that $\Ima(h)=\Ima(H^0(f^\bullet_0)),$ we get the exact sequence $0\to\Gamma_M\xrightarrow{h}M_0\to\cdots\to M_{n-1}\to M_n\to 0$ in  $\modu(\Gamma_{\!\!M})$ and then 
$\mathrm{coresdim}_{\add(M)}(\Gamma_{\!\!M})\leq n$ in $\modu(\Gamma_{\!\!M}).$ 
\

(b) Let $M$ be $\tau_n$-rigid and sincere in $\modu(\Lambda)$ and $n$-tilting in $\modu(\Gamma_{\!\!M}).$ By Corollary \ref{CoroNAIR, 3.4} (b), we get that $M$ is $\tau_n$-rigid in $\modu(\Gamma_{\!\!M}).$ In order to show that $M$ is $\tau_n$-tilting in $\modu(\Lambda),$ by using Theorem \ref{NAIR, 3.4}, \cite[Theorem 3.5 and Remark 3.3]{We13} and $\resdim_{\add(\Lambda[0])}(\prn(M))\leq n$ in $\homo,$ it is enough to prove that $\coresdim_{\add(\prn(M))}(\Lambda[0])<\infty$ in $\homo.$
\

Since $\mathrm{coresdim}_{\add(M)}(\Gamma_{\!\!M})\leq n$ in $\modu(\Gamma_{\!\!M}),$ there is a family 
 $\{\mu_i:\;0\to N_i\overset{f_i}{\to}M_i\overset{g_i}{\to}N_{i+1}\to0\}_{i=0}^{n-1}$ of exact sequences in $\modu(\Gamma_{\!\!M}),$ with $N_0:=\Gamma_{\!\!M},$ 
$N_n:=M_n$ and $\{M_i\}_{i=0}^{n}$ in $\add(M).$ 

Consider $f_0p: \Lambda\to M_0.$ Since $\Lambda\in\proj(\Lambda)$ there is $f_0^\bullet: \Lambda[0]\to\prn(M_0)$ in $\homo$ with $H^0(f_0^\bullet)=f_0p.$ Moreover, this morphism can be completed to the distinguished triangle in $\homo$
$$\eta_0:\; \Lambda[0]\xrightarrow{f_0^\bullet}\prn(M_0)\to P_1^\bullet\to\Lambda[1],$$
where $P_1^\bullet:=\Cone(f_0^\bullet)\in\mathsf{C}^{[-n,0]}(\proj(\Lambda)).$ We assert that $\eta_0$ satisfies the following conditions:

\begin{enumerate}
\item[(I)] {\it $H^0(f_0^\bullet)=f_0p:\Lambda \to M_0$ is an $\add(M)$-preenvelope.}\\
 Indeed, let $h: \Lambda\to M'$ be in $\modu(\Lambda),$ with $M'\in\add(M).$ Then,  there is $\overline{h}: \Gamma_M\to M'$ such that $h=\overline{h}p.$ Now, by Lemma \ref{l3NAIR, 2.2} (b), we have that $f_0:\Gamma_{\!\!M}\to M_0$ is an $\add(M)$-preenvelope. Therefore, we get a morphism $h': M_0\to M'$ such that $\overline{h}=h'f_0.$ Hence $h=\overline{h}p=h'f_0p.$ 

\item[(II)] {\it $f_0^\bullet:\Lambda[0]\to \prn(M_0)$ is an $\add(\prn(M))$-preenvelope.}\\
Indeed, it follows from (I) and Proposition \ref{p1p} (a).

\item[(III)] $\pr_1\in{}^{\perp_{>0}}\prn(M).$\\ 
Indeed, it follows from (I) and Proposition \ref{p1p} (b).

\item[(IV)] $H^0(\pr_1)\simeq N_1.$\\
Indeed, by applying the cohomological functor $H^0$ to $\eta_0,$ we get the  exact sequence
$\Lambda\xrightarrow{f_0p} M_0\to H^0(\pr_1)\to 0.$ Thus 
$H^0(\pr_1)\simeq \Coker(f_0p)\simeq \Coker(f_0)\simeq N_1$ since $p$ is an epimorphism.
\end{enumerate}

Let $n=1.$ From (IV) and Proposition \ref{p1. NAIR, 2.2} there exists $Q\in\proj(\Lambda)$ such that $\pru(N_1)\coprod Q[1]=P_1^\bullet.$ Then, by (III), 
 Theorem \ref{NAIR, 3.4} and Lema \ref{lemtrunc} we have that 
 $\Hom_{\mathsf{K}(\proj(\Lambda))}(Q[1],\pr(M)[1])=0.$ Since $M$ is sincere, it follows from Lemma \ref{l3. NAIR, 2.2} that $Q=0.$ Thus $\pru(N_1)=P_1^\bullet,$ with $N_1=M_1\in\add(M),$ and then from $\eta_0$ we get $\coresdim_{\add(P^{\bullet}_{\geq -1}(M))}(\Lambda[0])\leq 1$ in $\homo.$
\

Let $n=2.$ Since $\pr_1\in\mathsf{C}^{[-2,0]}(\proj(\Lambda)),$ by (IV) and 
the comparison lemma in homological algebra, there is a morphism $f_1^\bullet:\pr_1\to\pr_{\geq-2}(M_1)$ in $\homo$ such that 
$H^0(f_1^\bullet)=f_1.$  Thus we have the distinguished triangle in $\homo$
$$\eta_1:\; \pr_1\xrightarrow{f_1^\bullet}\pr_{\geq-2}(M_1)\to\pr_2\to\pr_1[1],$$
where $\pr_2:=\Cone(f_1^\bullet)\in \mathsf{C}^{[-3,0]}(\proj(\Lambda)).$ Let us show that 
\begin{center}
$H^{-1}(\pr_2)=0$ and $H^0(\pr_2)\simeq N_2=M_2.$
\end{center}
Indeed, since $H^1(\pr_1)=0$ and $H^{-1}(\pr_{\geq-2}(M_1))=0,$ by applying the cohomological functor $H^0$ to $\eta_1,$ we obtain the exact sequence 
$0\to H^{-1}(\pr_2)\to N_1\xrightarrow{f_1}M_1\to H^0(\pr_2)\to0.$ Then $H^{-1}(\pr_2)=0$ ($f_1$ is a monomorphism) and $H^0(\pr_2)\simeq N_2.$

Using that $H^{-1}(\pr_2)=0$ and $H^0(\pr_2)\simeq N_2=M_2,$ we can construct a morphism $g^\bullet: \pr_{\geq-2}(M_2)\to\pr_2$ in $\homo$ such that $H^0(g^\bullet)=1_{M_2}.$ Hence we get the distinguished triangle in $\homo$
$$\varepsilon_1:\; \pr_{\geq-2}(M_2)\xrightarrow{g^\bullet}\pr_2\to T^\bullet\to\pr_{\geq-2}(M_2)[1],$$
where $T^\bullet:=\Cone(g^\bullet)\in\mathsf{C}^{[-3,0]}(\proj(\Lambda)).$ We assert that 
\begin{center}
$H^{-1}(T^\bullet)=0$ and $H^0(T^\bullet)=0.$
\end{center}
Indeed, by applying $H^0$ to $\varepsilon_1$ and using that $H^{-1}(\pr_2)=0$ and $H^1(\pr_{\geq-2}(M_2))=0,$ we get the exact sequence 
$0\to H^{-1}(T^\bullet)\to M_2\xrightarrow{1_{M_2}}M_2\to H^0(T^\bullet)\to 0.$ 
Thus our assertion follows.  Now, using that $T^\bullet\in\mathsf{C}^{[-3,0]}(\proj(\Lambda)),$ $H^{-1}(T^\bullet)=0$ and $H^0(T^\bullet)=0,$ we get from 
Lemma \ref{l1. NAIR, 2.2} that 
\begin{center}
$T^\bullet\simeq Q^\bullet$ in $\homo,$ where $Q^\bullet\in \mathsf{C}^{[-3,-2]}(\proj(\Lambda)).$ 
\end{center}

Now, let us show that $\pr_2\in{}^{\perp_{>0}}\pr(M).$ Indeed, consider the functor\\
$(-,\pr(M)):=\Hom_{\mathsf{K}(\proj(\Lambda))}(-,\pr(M)).$ Applying this functor to $\eta_1$ and since $(\pr_{\geq -2}(M_1),\pr(M)[1])=0$ (use Lemma \ref{lemtrunc} and Theorem \ref{NAIR, 3.4}), we get the following exact sequence
\begin{center}
$(\pr_{\geq-2}(M_1),\pr(M))\xrightarrow{(f_1^\bullet,\pr(M))}(\pr_1,\pr(M))\to(\pr_2,\pr(M)[1])\to 0.$
\end{center}
Moreover, Lemma \ref{l3NAIR, 2.2} (b) and Lemma \ref{l5NAIR, 2.2} imply that $(f_1^\bullet,\pr(M))$ is an epimorphism and thus $(\pr_2,\pr(M)[1])=0.$ Similarly, for $j\geq 1,$ from (III), Lemma \ref{lemtrunc} and Theorem \ref{NAIR, 3.4}, we get the exact sequence 
$$0=(\pr_1,\pr(M)[j])\to(\pr_2,\pr(M)[j+1])\to(\pr_{\geq-2}(M_1),\pr(M)[j+1])=0;$$
proving that $\pr_2\in{}^{\perp_{>0}}\pr(M).$
\

Let us show that $g^\bullet:\pr_{\geq -2}(M_2)\stackrel{\sim}{\to} \pr_2$ in $\homo.$ Indeed, for this, it is enough to show that $T^\bullet\simeq 0$ in $\homo.$ Let $j\geq 1,$  applying the functor $(-,\pr(M))$ to $\varepsilon_1$ and using that 
$\pr_2,\;\pr_{\geq-2}(M)\in{}^{\perp_{>0}}\pr(M),$ we obtain the  exact sequence 
$$0=(\pr_{\geq-2}(M),\pr(M)[j])\to(T^\bullet,\pr(M)[j+1])\to(\pr_2,\pr(M)[j+1])=0.$$ Thus $T^\bullet\in{}^{\perp_{\geq-2}}(\pr(M))$ and then $Q^\bullet[-2]\simeq T^\bullet[-2]\in{}^{\perp_{\geq0}}(\pr(M)).$ Then, from Lemma \ref{p3. NAIR, 2.2}, we get that $Q^\bullet[-2]\simeq 0$ in $\homo$ since $Q^\bullet[-2]\in \mathsf{C}^{[-1,0]}(\proj(\Lambda)).$ Hence $T^\bullet\simeq 0$ in $\homo.$
\

Finally, from the isomorphism $g^\bullet$ and the distinguished triangles $\eta_0$ and $\eta_1,$ we conclude that $\coresdim_{\add(P^{\bullet}_{\geq -2}(M))}(\Lambda[0])\leq 2$ in $\homo.$
\end{proof}

\begin{rk} The given proof of Theorem \ref{NAIR, 2.2} (b) does not work for $n\geq 3.$ As we will see in what follows, the problem is that we get a ``to big'' cochain complex with non zero cohomology in a relevant degree. Indeed, let $n=3.$ Then, doing as before, we can construct the same distinguished triangles $\eta_0$ and $\eta_1.$ In this case we have that $\pr_2\in \mathsf{C}^{[-4,0]}(\proj(\Lambda))$ since $\pr_1\in\mathsf{C}^{[-3,0]}(\proj(\Lambda)).$ Note that the length of $\pr_2$  could be bigger than the length of $\pr_{\geq 3}(M_2)$ and thus we can not lift 
$f_2:N_2\to M_2$ to a morphism $f^\bullet_2:\pr_2\to \pr_{\geq 3}(M_2)$ of cochain complexes. On the other hand, if we want to solve the above situation by lifting first the  identity map $1_{N_2}: N_2\to N_2,$ we can not do that because $\ann(M)\subseteq H^{-2}(\pr_2)$ and thus $H^{-2}(\pr_2)\neq 0$ if $\ann(M)\neq 0.$ 
\

Let us show that $\ann(M)\subseteq H^{-2}(\pr_2).$ By applying $H^0$ to $\eta_1$ and using that $H^i(\pr_{\geq 3}(M_1))=0,$ for $i=-1,-2,$ we get that $H^{-2}(\pr_2)\simeq H^{-1}(\pr_1).$ On the other hand, by applying $H^0$ to $\eta_0$ and using that $H^{-1}(\pr_{\geq 3}(M_0))=0,$ it follows that $\ann(M)\subseteq\Ker(f_0p)\simeq H^{-1}(\pr_1).$ 
\end{rk}

\begin{cor}\label{c, NAIR, 2.2} Let $M\in\modu(\Lambda)$ and $n=1,2.$ Then the following statements are equivalent.
\begin{itemize}
\item[(a)] $M$ is $\tau_{n,n}$-tilting in $\modu(\Lambda).$
\item[(b)] $M$ is sincere and $\tau_n$-rigid in $\modu(\Lambda)$ and $M$ is $n$-tilting in $\modu(\Gamma_{\!\!M}).$
\end{itemize}
\end{cor}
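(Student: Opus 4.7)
The statement is essentially a repackaging of Theorem \ref{NAIR, 2.2} together with the basic properties of $\tau_n$-tilting modules established in Theorem \ref{c1} and Theorem \ref{NAIR, 3.4}, so the plan is to verify each implication by citing these results.

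For (a) $\Rightarrow$ (b), suppose $M$ is $\tau_{n,n}$-tilting in $\modu(\Lambda)$. First, being $\tau_n$-tilting in $\modu(\Lambda)$, Theorem \ref{c1}(a) gives immediately that $M$ is sincere. Next, since $\prn(M)$ is silting, it is in particular presilting, so Theorem \ref{NAIR, 3.4} yields $M\in M^{\perp_{\tau_n}}$, i.e.\ $M$ is $\tau_n$-rigid in $\modu(\Lambda)$. Finally, applying Theorem \ref{NAIR, 2.2}(a) with $m=n$, we conclude that $M$ is $n$-tilting in $\modu(\Gamma_{\!\!M})$.

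For (b) $\Rightarrow$ (a), the hypotheses are precisely those required by Theorem \ref{NAIR, 2.2}(b) (with $m=n$): $M$ is $\tau_n$-rigid and sincere in $\modu(\Lambda)$, and $n$-tilting in $\modu(\Gamma_{\!\!M})$. Since $n\in\{1,2\}$, the restriction $n\leq 2$ in that theorem is satisfied, and we directly conclude that $M$ is $\tau_{n,n}$-tilting in $\modu(\Lambda)$.

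There is no real obstacle here, since the heavy lifting has already been done in Theorem \ref{NAIR, 2.2}; the only point worth double-checking is that the definition of $\tau_{n,n}$-tilting unfolds correctly, namely that $\tau_n$-tilting in $\modu(\Lambda)$ forces $M$ to be $\tau_n$-rigid in $\modu(\Lambda)$ (hence, by Corollary \ref{CoroNAIR, 3.4}(b) applied via the faithfulness of $M$ in $\modu(\Gamma_{\!\!M})$, also $\tau_n$-rigid in $\modu(\Gamma_{\!\!M})$), so that the conditions match up with the hypotheses of Theorem \ref{NAIR, 2.2} in both directions.
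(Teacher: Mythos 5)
Your proof is correct and follows essentially the same route as the paper, which proves the corollary in one line by combining Theorem \ref{NAIR, 2.2} (both parts, with $m=n$) with Theorem \ref{c1}(a), plus the observation that silting implies presilting so Theorem \ref{NAIR, 3.4} gives $\tau_n$-rigidity in $\modu(\Lambda)$. The closing aside is unnecessary (the hypotheses and conclusions of Theorem \ref{NAIR, 2.2}(a) and (b) already match statements (a) and (b) verbatim) and its parenthetical derivation of $\tau_n$-rigidity over $\Gamma_{\!\!M}$ from $\tau_n$-rigidity over $\Lambda$ is not quite the right mechanism --- the paper instead gets it from $M$ being $n$-tilting over $\Gamma_{\!\!M}$ via Corollary \ref{CoroNAIR, 3.4}(b) --- but nothing in your two main implications depends on that remark.
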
 
\begin{proof} It follows from Theorem \ref{NAIR, 2.2} and Theorem \ref{c1} (a).
\end{proof}
  
  The next example shows a $\tau_{2,2}$-tilting $\Lambda$-module of infinite projective dimension which is not $\tau$-rigid.  
    
\begin{ex}\label{ExImp}Let $k$ be a field and let $\Lambda$ be the quotient path $k$-algebra $kQ/R^2,$ where $Q$ is the quiver 
\begin{center}
$\xymatrix{&&\bullet^2\ar[dr]^{\beta}\\
&\bullet^1\ar[ru]^{\alpha}&&\bullet^3\ar[ll]^{\gamma}}$
\end{center}
and $R$ is the ideal in $kQ$ generated by all the arrows in $Q.$ The Auslander-Reiten quiver of $\modu(\Lambda)$ is
{\small $$\xymatrix{&P(3)\ar[ddr]\\
&&&&&P(1)\ar[rd]\\
S(1)\ar[ruu]&&S(3)\ar[rd]&&S(2)\ar[ru]&&S(1)\\
&&&P(2).\ar[ru]}$$}
Let $M:=P(2)\oplus P(1)\oplus S(1).$ Note that $M$ is sincere,  
$M^{\perp_{\tau_2}}={}^{\perp_0}S(3)\cap S(1)^{\perp_1}=\add(M)$ and thus $M$ is $\tau_2$-rigid in $\modu(\Lambda).$ It can be shown that $\Gamma_{\!\!M}:=\Lambda/\ann(M)=kQ'/R'^2,$ where $Q'$ is the quiver $\xymatrix{\bullet^1\ar[r]^\delta&\bullet^2\ar[r]^\iota&\bullet^3}$ and $R'=<\iota\delta>.$ Moreover, the Auslander-Reiten of $\modu(\Gamma_{\!\!M})$ is
{\small $$\xymatrix{&&&P(1)\ar[rd]\\
S(3)\ar[rd]&&S(2)\ar[ru]&&S(1)\\
&P(2)\ar[ru]}$$}
and thus $M$ is $2$-tilting in $\modu(\Gamma_{\!\!M}).$ Therefore, from Corollary \ref{c, NAIR, 2.2},  $M$ is $\tau_{2,2}$-tilting in $\modu(\Lambda).$
Note that $\Omega^3(S(1))=S(1)$ and hence $\pd(S(1))$ is infinite. Finally, 
${}^{\perp_0}\tau M={}^{\perp_0}S(2)=\add(S(3)\oplus P(3)\oplus S(1)\oplus P(1))$ in $\modu(\Lambda)$ and then $M$ is not $\tau$-rigid.
\end{ex}

\begin{lem}\label{l1,c} Let $J$ be a nilpotent ideal of $\Lambda.$  Then 
$\findim(\Lambda)\leq \pd_\Lambda(\Lambda/J)+\findim(\Lambda/J).$
\end{lem}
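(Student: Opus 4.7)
The plan is to filter a module $M \in \modu(\Lambda)$ of finite projective dimension by its $J$-adic filtration, so that the successive graded pieces become $\Lambda/J$-modules whose projective dimensions can be controlled via change of rings.

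Set $p := \pd_\Lambda(\Lambda/J)$ and $d := \findim(\Lambda/J)$; if either is infinite the inequality is trivial, so assume both are finite. Fix $M \in \modu(\Lambda)$ with $\pd_\Lambda(M) < \infty$, and let $n \geq 1$ be minimal with $J^n = 0$. The filtration $M \supseteq JM \supseteq \cdots \supseteq J^n M = 0$ yields short exact sequences
$$0 \to J^{i+1}M \to J^i M \to N_i \to 0, \qquad i=0,\dots,n-1,$$
where $N_i := J^i M/J^{i+1} M$ is annihilated by $J$ and therefore belongs to $\modu(\Lambda/J)$.

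The classical change-of-rings inequality, obtained by lifting a $\Lambda/J$-projective resolution of $N_i$ to a $\Lambda$-projective one through a $\Lambda$-projective resolution of $\Lambda/J$, gives
$$\pd_\Lambda(N_i) \leq \pd_\Lambda(\Lambda/J) + \pd_{\Lambda/J}(N_i) = p + \pd_{\Lambda/J}(N_i).$$
Provided $\pd_{\Lambda/J}(N_i) < \infty$, the definition of $\findim(\Lambda/J)$ bounds the right-hand side by $p + d$. Finiteness of $\pd_{\Lambda/J}(N_i)$ is the crux: I would deduce it by dimension-shifting starting from the bottom of the filtration, combining $\pd_\Lambda(M) < \infty$ with the finiteness of $\pd_\Lambda(\Lambda/J)$, so that any $\Lambda/J$-module appearing as a subquotient of $M$ inherits finite $\Lambda/J$-projective dimension.

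Once the uniform bound $\pd_\Lambda(N_i) \leq p + d$ is in place for every $i$, the basic dévissage $\pd_\Lambda(B) \leq \max\{\pd_\Lambda(A), \pd_\Lambda(C)\}$ applied to $0 \to A \to B \to C \to 0$ is used inductively along the filtration (starting from $J^{n-1}M = N_{n-1}$ and working upward to $M$), yielding $\pd_\Lambda(M) \leq \max_i \pd_\Lambda(N_i) \leq p + d$. Taking the supremum over all such $M$ gives $\findim(\Lambda) \leq p + d$. The principal obstacle is the transfer of finiteness from $\pd_\Lambda$ to $\pd_{\Lambda/J}$ for the graded pieces $N_i$, which must be argued carefully since only there can $\findim(\Lambda/J)$ be invoked to produce the uniform bound $d$.
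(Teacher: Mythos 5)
Your overall skeleton (the change-of-rings bound $\pd_\Lambda(N)\leq \pd_\Lambda(\Lambda/J)+\pd_{\Lambda/J}(N)$ for $N\in\modu(\Lambda/J)$, and the d\'evissage along the filtration $M\supseteq JM\supseteq\cdots\supseteq J^nM=0$) is sound, but the step you yourself single out as the crux is a genuine gap, and it is not one that ``dimension-shifting along the filtration'' can fill. You need every graded piece $N_i=J^iM/J^{i+1}M$ to have \emph{finite} projective dimension over $\Lambda/J$, and nothing in the hypotheses delivers this: the implication ``$\pd_\Lambda(M)<\infty$ and $\pd_\Lambda(\Lambda/J)<\infty$ imply $\pd_{\Lambda/J}(N_i)<\infty$'' is exactly the hard content of the lemma, and it already lacks a proof for $i=0$. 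The natural mechanism for transporting a resolution from $\Lambda$ to $\Lambda/J$ is to apply $\Lambda/J\otimes_\Lambda-$ to a projective resolution of $M$, and this yields a $\Lambda/J$-projective resolution of $M/JM$ only when $\mathrm{Tor}^\Lambda_i(\Lambda/J,M)=0$ for all $i>0$; for an arbitrary $M$ of finite projective dimension these Tor groups need not vanish. Nor is there a free base case at the ``bottom of the filtration'': $J^{n-1}M$ is just some $\Lambda/J$-submodule of $M$, and for a module $N$ annihilated by $J$, finiteness of $\pd_\Lambda(N)$ does not in general imply finiteness of $\pd_{\Lambda/J}(N)$.

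The paper's proof (adapted from Suarez) circumvents precisely this obstruction by a different reduction: set $n:=\pd_\Lambda(\Lambda/J)$ (assumed finite) and replace $M$ by its $n$-th syzygy $M_n:=\Omega^n(M)$. Then $\mathrm{Tor}^\Lambda_i(\Lambda/J,M_n)\simeq\mathrm{Tor}^\Lambda_{n+i}(\Lambda/J,M)=0$ for $i>0$, so tensoring a minimal $\Lambda$-projective resolution of $M_n$ with $\Lambda/J$ gives a projective resolution of $M_n/JM_n$ over $\Lambda/J$, which is still minimal because $J$ nilpotent forces $J\subseteq\rad(\Lambda)$. This yields $\pd_{\Lambda/J}(M_n/JM_n)=\pd_\Lambda(M_n)<\infty$, hence $\pd_\Lambda(M_n)\leq\findim(\Lambda/J)$ and $\pd_\Lambda(M)\leq n+\pd_\Lambda(M_n)$. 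If you want to keep your filtration strategy, you would first have to establish the finiteness of each $\pd_{\Lambda/J}(N_i)$ by some such syzygy/Tor-vanishing argument, at which point the filtration itself becomes superfluous.
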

\begin{proof} We can  adapt the proof of \cite[Proposition 2.3]{Su21} to our situation as follows. We can assume that $n:=\pd_\Lambda(\Lambda/J)$ is finite (otherwise there is nothing to prove). Let $M\in\modu(\Lambda)$ with $\pd_\Lambda(M)$ finite. In particular, for $M_n:=\Omega^n(M),$ we have that $\pd_\Lambda(M_n)$ is also finite. Then, we can continue the proof as in \cite[Proposition 2.3]{Su21} and then 
$\pd_{\Lambda/J}(M_n/JM_n)=\pd_\Lambda(M_n)<\infty.$ Therefore
$$\pd_\Lambda(M)\leq n+\pd_\Lambda(M_n)\leq n+\findim(\Lambda/J),$$
proving the result.
\end{proof}

\begin{lem}\label{LbAnn} Let $M\in\modu(\Lambda)$ be sincere. Then, $\ann(M)\subseteq\rad(\Lambda)$ and $rk(\Lambda)=rk(\Gamma_{\!\!M}).$
\end{lem}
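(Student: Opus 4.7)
The plan is to exploit the characterization of $\rad(\Lambda)$ as the intersection of the annihilators of all simple $\Lambda$-modules, coupled with the standard bijection between simple modules and indecomposable projectives over an Artin algebra.

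First I would prove $\ann(M)\subseteq\rad(\Lambda).$ Choose a composition series $0=M_0\subsetneq M_1\subsetneq \cdots\subsetneq M_k=M$ of $M.$ Since $\ann(M)\cdot M=0$ and $M_i\subseteq M,$ the ideal $\ann(M)$ kills each $M_i$ and therefore each composition factor $M_i/M_{i-1}.$ By hypothesis $M$ is sincere, so every simple $\Lambda$-module $S$ occurs (up to isomorphism) as some $M_i/M_{i-1};$ thus $\ann(M)\subseteq \ann(S)$ for every simple $S\in\modu(\Lambda).$ Since $\rad(\Lambda)=\bigcap_{S}\ann(S),$ with $S$ running over the isomorphism classes of simple left $\Lambda$-modules, the inclusion $\ann(M)\subseteq\rad(\Lambda)$ follows.

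For the second assertion I would use the first part. Over any Artin algebra $A,$ the decomposition $A=\bigoplus_i P_i^{n_i}$ into pairwise non-isomorphic indecomposable projective left modules gives, via $P_i\mapsto P_i/\rad(P_i),$ a bijection between the $P_i$ and the isomorphism classes of simple left $A$-modules; hence $rk(A)$ equals the number of simple $A$-modules. Thus it suffices to check that the canonical projection $\pi\colon\Lambda\twoheadrightarrow\Gamma_{\!\!M}$ induces a bijection between simple $\Lambda$-modules and simple $\Gamma_{\!\!M}$-modules. Any simple $\Gamma_{\!\!M}$-module is of course a simple $\Lambda$-module via $\pi.$ Conversely, every simple $\Lambda$-module $S$ satisfies $\rad(\Lambda)\cdot S=0,$ and by the first part $\ann(M)\subseteq\rad(\Lambda),$ so $\ann(M)\cdot S=0;$ hence $S$ carries a natural (and simple) $\Gamma_{\!\!M}$-module structure. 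These two operations are mutually inverse, which yields $rk(\Lambda)=rk(\Gamma_{\!\!M}).$

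There is no real obstacle in the argument; both parts are formal consequences of sincerity together with the standard description of the Jacobson radical. The only point deserving a little care is verifying that the two operations in the second paragraph are mutually inverse, which ultimately hinges on the inclusion $\ann(M)\subseteq\rad(\Lambda)$ established in the first step.
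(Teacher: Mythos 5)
Your proof is correct. The overall structure matches the paper's (first establish $\ann(M)\subseteq\rad(\Lambda)$, then compare the simple modules of $\Lambda$ and $\Gamma_{\!\!M}$), but the details differ at both steps. For the inclusion $\ann(M)\subseteq\rad(\Lambda)$ the paper argues via idempotents: sincerity forbids a nonzero idempotent in $\ann(M)$, whence $\ann(M)\cap P\subseteq\rad(P)$ for each indecomposable projective $P$. Your route through $\rad(\Lambda)=\bigcap_S\ann(S)$ and the composition factors of $M$ is more direct and avoids the (left implicit in the paper) fact that an ideal of an Artin algebra not contained in the radical contains a nonzero idempotent. For the equality of ranks, the paper does not reuse the first part: it observes that $\modu(\Gamma_{\!\!M})$ is closed under submodules and quotients in $\modu(\Lambda)$, so every simple $\Gamma_{\!\!M}$-module is a simple $\Lambda$-module, and conversely every simple $\Lambda$-module, being a composition factor of the sincere module $M$, lies in $\modu(\Gamma_{\!\!M})$. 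Your version instead deduces from $\ann(M)\subseteq\rad(\Lambda)$ that every simple $\Lambda$-module is annihilated by $\ann(M)$ and hence descends to a simple $\Gamma_{\!\!M}$-module; this makes the logical dependence of the second claim on the first explicit and is equally valid. Both arguments rest on identifying $rk(A)$ with the number of isomorphism classes of simple $A$-modules.
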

\begin{proof} Since $M$ is sincere, there is no idempotent $0\neq e\in\Lambda$ such that $e\in\ann(M).$ Therefore,  $\ann(M)\cap P\subseteq\rad(P),$ for any indecomposable projective $\Lambda$-module. Hence  $\ann(M)\subseteq \rad(\Lambda).$
\

 In order to prove that $rk(\Lambda)=rk(\Gamma_{\!\!M}),$ it is enough to show that 
 $rk\,K_0(\Lambda)=rk\,K_0(\Gamma_{\!\!M}).$ Since $\modu(\Gamma_{\!\!M})$ is closed under submodules in $\modu(\Lambda),$ we get that any simple $\Gamma_{\!\!M}$-module is a simple $\Lambda$-module and thus $rk(\Gamma_{\!\!M})\leq rk(\Lambda).$ Finally, using now that $\modu(\Gamma_{\!\!M})$ is both closed under submodules and quotients in $\modu(\Lambda),$ we get that any simple $\Lambda$-module appearing as composition factor of $M$ is a simple $\Gamma_{\!\!M}$-module. Then 
 $rk(\Lambda)\leq rk(\Gamma_{\!\!M})$ since $M$ is sincere.
\end{proof}

The following result is a generalization of \cite[Theorem 2.5]{Su21}, and to obtain it, we take $n=m=1$ and $\Lambda$ of finite global dimension in Corollary \ref{Corfindim}.

\begin{cor}\label{Corfindim} Let $M$ be $\tau_{n,m}$-tilting in $\modu(\Lambda)$  and $\Gamma_{\!\!M}:=\Lambda/\ann(M).$ Then 
$\findim(\Lambda)\leq \findim(\End_{\Gamma_{\!\!M}}(M)^{op})+\pd_\Lambda(M)+m.$
\end{cor}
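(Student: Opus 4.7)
The plan is to split the claim $\findim(\Lambda) \leq \findim(B) + \pd_\Lambda(M) + m$, with $B := \End_{\Gamma_{\!\!M}}(M)^{op}$, into three pieces. The first piece, produced by Lemma \ref{l1,c}, reduces the finitistic dimension from $\Lambda$ to the quotient $\Gamma_{\!\!M}$ at the cost of an additive $\pd_\Lambda(\Gamma_{\!\!M})$ term; the second piece bounds that projective dimension by $\pd_\Lambda(M)$, using the tilting coresolution of $\Gamma_{\!\!M}$ supplied by Theorem \ref{NAIR, 2.2}(a); the third piece transfers $\findim(\Gamma_{\!\!M})$ to $\findim(B)$ via the classical finitistic-dimension comparison for $m$-tilting modules.

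For the first piece I would use that $M$ is $\tau_n$-tilting in $\modu(\Lambda)$, so by Theorem \ref{c1}(a) it is sincere, and then Lemma \ref{LbAnn} gives $\ann(M) \subseteq \rad(\Lambda)$; since $\rad(\Lambda)$ is nilpotent in the Artin algebra $\Lambda$, so is $J := \ann(M)$. Hence Lemma \ref{l1,c} yields
$$\findim(\Lambda) \leq \pd_\Lambda(\Gamma_{\!\!M}) + \findim(\Gamma_{\!\!M}).$$
For the second piece, Theorem \ref{NAIR, 2.2}(a) upgrades the hypothesis to: $M$ is $m$-tilting in $\modu(\Gamma_{\!\!M})$, so $\pd_{\Gamma_{\!\!M}}(M) \leq m$ and there is an exact sequence $0 \to \Gamma_{\!\!M} \to M_0 \to M_1 \to \cdots \to M_m \to 0$ in $\modu(\Gamma_{\!\!M}) \subseteq \modu(\Lambda)$ with each $M_i \in \add(M)$. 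Splicing this long exact sequence into $m$ short exact sequences in $\modu(\Lambda)$ and iterating the inequality $\pd(A) \leq \max\{\pd(B),\pd(C)-1\}$ for $0 \to A \to B \to C \to 0$ yields $\pd_\Lambda(\Gamma_{\!\!M}) \leq \pd_\Lambda(M).$

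The third piece, which I expect to be the main obstacle, is the bound $\findim(\Gamma_{\!\!M}) \leq \findim(B) + m.$ This comes from the classical statement (Miyashita/Happel type) that for an $m$-tilting $\Gamma$-module $T$ with $B := \End_\Gamma(T)^{op}$ one has $\findim(\Gamma) \leq \findim(B) + \pd_\Gamma(T).$ The standard argument passes through the derived equivalence $\mathsf{D}^b(\modu(\Gamma)) \simeq \mathsf{D}^b(\modu(B))$ induced by $T$ and tracks how cohomological amplitude can grow by at most $m$ under the equivalence; alternatively, for any $N \in \modu(\Gamma)$ of finite projective dimension one builds an $\add(T)$-coresolution of length at most $\pd_\Gamma(N)$ and transports it through $\Hom_\Gamma(T,-)$ into $\modu(B).$ Applied to $\Gamma := \Gamma_{\!\!M}$ and $T := M$ and combined with the previous two inequalities, this yields
$$\findim(\Lambda) \leq \pd_\Lambda(M) + \findim(\Gamma_{\!\!M}) \leq \pd_\Lambda(M) + \findim(B) + m,$$
which is the desired estimate.
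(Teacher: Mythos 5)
Your proposal is correct and follows essentially the same three-step decomposition as the paper: nilpotence of $\ann(M)$ via sincerity (Theorem \ref{c1}(a) and Lemma \ref{LbAnn}) feeding into Lemma \ref{l1,c}, then $\pd_\Lambda(\Gamma_{\!\!M})\leq\pd_\Lambda(M)$ from the $\add(M)$-coresolution of $\Gamma_{\!\!M}$ supplied by Theorem \ref{NAIR, 2.2}(a), then the tilting comparison $\findim(\Gamma_{\!\!M})\leq\findim(\End_{\Gamma_{\!\!M}}(M)^{op})+m$. The only cosmetic differences are that you splice the coresolution by hand where the paper cites \cite[Lemma 2.13 (a)]{MS06}, and you invoke the classical Miyashita/Happel-type bound where the paper cites \cite[Theorem 1.1]{PX09}.
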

\begin{proof}  By Theorem \ref{c1} (a) and Lemma \ref{LbAnn}, we get that  $\ann(M)$ is nilpotent. Then we can apply Lemma \ref{l1,c} to $J:=\ann(M)$ and so we get that $\findim(\Lambda)\leq \pd_\Lambda(\Gamma_{\!\!M})+\findim(\Gamma_{\!\!M}).$
On the other hand, by Theorem \ref{NAIR, 2.2} we know that $M$ is $m$-tilting in $\modu(\Gamma_{\!\!M}),$ which implies that $\Gamma_{\!\!M}\in (\add(M))^\vee$ in 
$\modu(\Lambda).$ Thus by \cite[Lemma 2.13 (a)]{MS06} we get that    $\pd_\Lambda(\Gamma_{\!\!M})\leq \pd_\Lambda((\add(M))^\vee)=\pd_\Lambda(M).$ Finally, from \cite[Theorem 1.1]{PX09}, it follows that 
$\findim(\Gamma_{\!\!M})\leq \findim(\End_{\Gamma_{\!\!M}}(M)^{op})+m.$
 \end{proof}
 
\begin{rk}\label{Funfin=} Let $J\unlhd \Lambda$ and $\overline{\Lambda}:=\Lambda/J.$ Note that 
$\modu(\overline{\Lambda})$ is closed under submodules in $\modu(\Lambda)$ and it is finitely cogenerated since $\overline{\Lambda}$ in an Artin algebra. Then, by \cite[Proposition 4.7 (c)]{AS80}, we get that $\modu(\overline{\Lambda})$ is funtorially finite in $\modu(\Lambda).$ Thus, for any 
$\X\subseteq\modu(\overline{\Lambda}),$ we have that $\X$ is preenveloping (precovering) in $\modu(\Lambda)$ if and only if it is so in $\modu(\overline{\Lambda}).$ 
\end{rk} 

 We want to give a characterization to know when a $\tau_n$-tilting $M\in\modu(\Lambda)$ is $\tau_m$-rigid in $\modu(\Gamma_{\!\!M}).$  In order to do that, we start with the following notions and preliminary results.

Let $\omega\subseteq\modu(\Lambda),$ $\ann(\omega):=\cap_{W\in\omega}\ann(W)$ and 
$\Gamma_{\!\omega}:=\Lambda/\ann(\omega).$ We say that the class $\omega$ is sincere if it contains a sincere $\Lambda$-module. Note that if $\omega=\add(W),$ then 
$\ann(\omega)=\ann(W),$ $\omega^{\perp_{\tau_n}}=W^{\perp_{\tau_n}}$ and $\omega^\perp=W^\perp.$

\begin{lem}\label{rtomega} For $\omega\subseteq\modu(\Lambda),$ the following statements hold true.
\begin{itemize}
\item[(a)] The class $\omega^{\perp_{\tau_n}}$ is right thick in $\modu(\Lambda).$
\item[(b)] If $\inj(\Gamma_{\!\omega})\subseteq \omega^{\perp_{\tau_n}}\subseteq\modu(\Gamma_{\!\omega}),$ then $(\omega^{\perp_{\tau_n}})^{\vee}_n=\modu(\Gamma_{\!\omega}).$
\end{itemize}
\end{lem}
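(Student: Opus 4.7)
For part (a), observe that $\omega^{\perp_{\tau_n}}=\bigcap_{W\in\omega}W^{\perp_{\tau_n}}$, since the defining conditions ``$\Hom_\Lambda(X,\tau_n W)=0$'' and ``$\Ext^i_\Lambda(W,X)=0$'' both run over $W\in\omega$. By Corollary \ref{CoroNAIR, 3.4}(d), each $W^{\perp_{\tau_n}}$ is right thick in $\modu(\Lambda)$. Right thickness---closure under extensions, mono-cokernels, and direct summands---is visibly preserved under arbitrary intersections of classes, so $\omega^{\perp_{\tau_n}}$ is right thick.

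For part (b), fix $M\in\modu(\Gamma_{\!\omega})$. My plan is to build the $\omega^{\perp_{\tau_n}}$-coresolution by truncating a $\Gamma_{\!\omega}$-injective coresolution of $M$ at level $n$. Since $\modu(\Gamma_{\!\omega})$ has enough injectives and $\inj(\Gamma_{\!\omega})\subseteq\omega^{\perp_{\tau_n}}$, this yields an exact sequence
$$0\to M\to I^0\to I^1\to\cdots\to I^{n-1}\to N\to 0$$
with each $I^j\in\inj(\Gamma_{\!\omega})\subseteq\omega^{\perp_{\tau_n}}$, and it then suffices to show $N\in\omega^{\perp_{\tau_n}}$. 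The condition $N\in{}^{\perp_0}\tau_n(\omega)$ is automatic: $N$ is a quotient of $I^{n-1}\in{}^{\perp_0}\tau_n(\omega)$, and the class ${}^{\perp_0}\tau_n(\omega)$ is closed under quotients because $\Hom_\Lambda(-,\tau_n W)$ is left exact.

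The main obstacle is the verification that $\Ext^i_\Lambda(W,N)=0$ for $1\leq i\leq n-1$ and $W\in\omega$. I plan to split the coresolution into short exact sequences $0\to K^j\to I^j\to K^{j+1}\to 0$ (with $K^0=M$ and $K^n=N$) and, using the long exact sequence of $\Ext^*_\Lambda(W,-)$, iteratively propagate the vanishings $\Ext^k_\Lambda(W,I^j)=0$ for $1\leq k\leq n-1$ (supplied by $I^j\in\omega^{\perp_{\tau_n}}$) through dimension shifts. A crucial additional ingredient is the second inclusion $\omega^{\perp_{\tau_n}}\subseteq\modu(\Gamma_{\!\omega})$: via a change-of-rings argument applied to the injectives $I^j$, it forces $\mathrm{Tor}^\Lambda_i(W,\Gamma_{\!\omega})=0$ for $1\leq i\leq n-1$ and $W\in\omega$, and hence yields an identification $\Ext^i_\Lambda(W,X)\simeq\Ext^i_{\Gamma_{\!\omega}}(W,X)$ for every $X\in\modu(\Gamma_{\!\omega})$ in the range $0\leq i\leq n-1$. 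I expect the subtle point to lie at the boundary $i=n-1$ of the vanishing range, where the iterated iso-shift terminates; closing this gap by combining the Ext-comparison with the explicit short exact sequence $0\to K^{n-1}\to I^{n-1}\to N\to 0$ should produce the desired vanishing and conclude $N\in\omega^{\perp_{\tau_n}}$.
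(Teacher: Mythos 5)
Part (a) is correct and is exactly the paper's argument: $\omega^{\perp_{\tau_n}}=\bigcap_{W\in\omega}W^{\perp_{\tau_n}}$ and right thickness passes to intersections.

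Part (b) has the right skeleton (truncate a $\Gamma_{\!\omega}$-injective coresolution at length $n$, show the $n$-th cosyzygy $N=N_n$ lies in $\omega^{\perp_{\tau_n}}$, and note that $N_n\in{}^{\perp_0}\tau_n(\omega)$ is automatic since it is a quotient of an injective lying in ${}^{\perp_0}\tau_n(\omega)$), but you leave the decisive step open and replace it with an unjustified detour. The point you flag as "the subtle point at the boundary $i=n-1$" is resolved not by any change-of-rings or Tor-vanishing argument, but by Corollary \ref{CoroNAIR, 3.4}(a): since ${}^{\perp_0}\tau_n(W)\subseteq W^{\perp_n},$ the full $\tau_n$-perpendicular class satisfies $W^{\perp_{\tau_n}}\subseteq\bigcap_{i=1}^{n}W^{\perp_i}$ --- one degree more than the $\Ext^1,\dots,\Ext^{n-1}$ vanishings you were propagating. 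Consequently the injectives $I^j\in\omega^{\perp_{\tau_n}}$ satisfy $\Ext^k_\Lambda(W,I^j)=0$ for all $k\in[1,n],$ so the dimension shift runs all the way to $\Ext^i_\Lambda(W,N_n)\simeq\Ext^{n}_\Lambda(W,K^{i})$ for each $i\in[1,n-1]$; and this terminal group vanishes because the cosyzygy $K^{i}$ (for $i\geq1$) is a quotient of $I^{i-1}\in{}^{\perp_0}\tau_n(W)$, hence lies in ${}^{\perp_0}\tau_n(W)\subseteq W^{\perp_n}$. The entire argument stays over $\Lambda$.

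Your "crucial additional ingredient" --- that $\omega^{\perp_{\tau_n}}\subseteq\modu(\Gamma_{\!\omega})$ forces $\mathrm{Tor}^\Lambda_i(W,\Gamma_{\!\omega})=0$ and hence an identification $\Ext^i_\Lambda(W,-)\simeq\Ext^i_{\Gamma_{\!\omega}}(W,-)$ on $\modu(\Gamma_{\!\omega})$ --- is not substantiated (the hypothesis is an inclusion of module classes, and no mechanism is given for extracting Tor-vanishing from it), and in any case it is not needed. As written, the proof of (b) is incomplete: the vanishing $\Ext^{n-1}_\Lambda(W,N)=0$ (indeed all of $\Ext^{i}_\Lambda(W,N)=0$ for $i\in[1,n-1]$) is never actually established.
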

\begin{proof} (a) Since 
$\omega^{\perp_{\tau_n}}=\cap_{W\in\omega}\,W^{\perp_{\tau_n}},$ the result follows from Corollary \ref{CoroNAIR, 3.4} (d).
\

(b) Assume that $\inj(\Gamma_{\!\omega})\subseteq \omega^{\perp_{\tau_n}}\subseteq\modu(\Gamma_{\!\omega}).$ Let $N\in \modu(\Gamma_{\!\omega}).$ Consider the family 
$\{\eta_i:\;0\to N_i\to I_i\to N_{i+1}\}_{i=0}^{n-1}$ of exact sequences in $\modu(\Gamma_{\!\omega}),$ where $\{I_k\}_{k=0}^{n-1}\subseteq \inj(\Gamma_{\!\omega})$ and $N_0:=N.$ It is enough to show that $N_n\in W^{\perp_{\tau_n}},$ for any $W\in \omega.$
\

Let $W\in\omega.$ By Corollary \ref{CoroNAIR, 3.4} (a),  
${}^{\perp_0}\tau_n(W)\subseteq W^{\perp_n}$ and $W^{\perp_{\tau_n}}\subseteq\cap_{i=1}^n W^{\perp_i}.$ Moreover $\inj(\Gamma_{\!\omega})\subseteq W^{\perp_{\tau_n}}.$ Let us show that the following (i) and (ii) hold true.

(i) $N_{i+1}\in {}^{\perp_0}\tau_n(W)$ for any $i\in[0,n-1].$\\
Indeed, it follows by applying $\Hom_\Lambda(-,\tau_n(W))$ to $\eta_i$ and using that $\inj(\Gamma_{\!\omega})\subseteq {}^{\perp_0}\tau_n(W).$
\

(ii) $N_n\in \cap_{i=1}^{n-1} W^{\perp_i}.$\\
Indeed, let $i\in[1,n-1].$ Applying the functor $\Hom_\Lambda(W,-)$ to $\eta_n$ and 
since $\inj(\Gamma_{\!\omega})\subseteq \cap_{i=1}^n W^{\perp_i},$ we get that 
$\Ext^i_\Lambda(W,N_n)\simeq \Ext^{i+1}_\Lambda(W,N_{n-1}).$ Thus, by repeating the same argument for $\eta_{n-1}, \cdots, \eta_i,$ we get the chain of isomorphisms
$$\Ext^i_\Lambda(W,N_n)\simeq \Ext^{i+1}_\Lambda(W,N_{n-1})\simeq\cdots\simeq 
\Ext^n_\Lambda(W,N_i).$$
Then by (i) $\Ext^n_\Lambda(W,N_i)=0$ and thus (ii) follows. Finally, from (i) and (ii), we conclude that $N_n\in W^{\perp_{\tau_n}}.$
\end{proof}

\begin{defi}\label{compClass} A class $\omega\subseteq\modu(\Lambda)$ is called $\tau_{n,m}$-compatible if the following conditions hold true for $\omega.$
\begin{itemize}
\item[(C1)] $\add(\omega)=\omega$ is sincere and a relative generator in $\omega^{\perp_{\tau_n}}.$
\item[(C2)] $\inj(\Gamma_{\!\omega})\subseteq\omega^{\perp_{\tau_n}}\subseteq\modu(\Gamma_{\!\omega}).$ 
\item[(C3)] $\omega^{\perp_{\tau_n}}$ is  preenveloping in $\modu(\Gamma_{\!\omega}).$
\item[(C4)] $\pd_{\Gamma_{\!\omega}}(\omega)\leq m.$
\end{itemize}
We denote by $\mathfrak{C}_{\tau_{n,m}}(\Lambda)$ the class of all the $\omega\subseteq\modu(\Lambda)$ which are $\tau_{n,m}$-compatible.
\end{defi}

\begin{teo}\label{Comp-tau} For a $\tau_{n,m}$-compatible class $\omega\subseteq\modu(\Lambda),$ the following statements hold true.
\begin{itemize}
\item[(a)] There is a basic, sincere and $\tau_n$-rigid $T\in\modu(\Lambda).$
\item[(b)] $T$ is $\min\{m,n\}$-tilting in $\modu(\Gamma_{\!T})$ and $\omega=\add(T).$
\item[(c)] $\omega^{\perp_{\tau_n}}=T^{\perp_{\tau_n}}=T^{\perp_{\Gamma_{\!T}}}$ and  
$rk(T)=rk(\Lambda).$
\end{itemize}
\end{teo}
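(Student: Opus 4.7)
My plan is to produce $T$ as the basic tilting $\Gamma_{\!\omega}$-module supplied by the Auslander--Reiten correspondence (Theorem \ref{AR91, 5.5}) applied to $\omega^{\perp_{\tau_n}}$ viewed inside $\modu(\Gamma_{\!\omega})$. To invoke Theorem \ref{AR91, 5.5}(a) for the algebra $\Gamma_{\!\omega}$ and the class $\X := \omega^{\perp_{\tau_n}}$, I would verify: $\X \subseteq \modu(\Gamma_{\!\omega})$ and $\inj(\Gamma_{\!\omega}) \subseteq \X$ by (C2); $\X$ is right saturated in $\modu(\Gamma_{\!\omega})$ because it is right thick by Lemma \ref{rtomega}(a) and contains $\inj(\Gamma_{\!\omega})$; $\X$ is preenveloping in $\modu(\Gamma_{\!\omega})$ by (C3); and $\modu(\Gamma_{\!\omega}) = \X^{\vee}$ by Lemma \ref{rtomega}(b). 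This yields a basic tilting $T \in \modu(\Gamma_{\!\omega})$ with $T^{\perp_{\Gamma_{\!\omega}}} = \omega^{\perp_{\tau_n}}$.

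Next I would prove $\omega = \add(T)$. For $\add(T) \subseteq \omega$: since $T \in T^{\perp_{\Gamma_{\!\omega}}} = \omega^{\perp_{\tau_n}}$, condition (C1) furnishes a short exact sequence $0 \to K \to W \to T \to 0$ with $W \in \omega$ and $K \in \omega^{\perp_{\tau_n}}$; tiltingness of $T$ yields $\Ext^1_{\Gamma_{\!\omega}}(T, K) = 0$, so the sequence splits and $T \in \add(W) \subseteq \omega$. For $\omega \subseteq \add(T)$, let $W \in \omega$; Theorem \ref{AR91, 5.5}(b) supplies $0 \to K' \to T_0 \to W \to 0$ with $T_0 \in \add(T)$ and $K' \in \omega^{\perp_{\tau_n}}$. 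By Corollary \ref{CoroNAIR, 3.4}(a), $\omega^{\perp_{\tau_n}} \subseteq \omega^{\perp_1}$, hence $\Ext^1_\Lambda(W, K') = 0$; since the natural map $\Ext^1_{\Gamma_{\!\omega}}(W, K') \to \Ext^1_\Lambda(W, K')$ is injective (a splitting of the ambient $\Lambda$-extension is automatically $\Gamma_{\!\omega}$-linear), $\Ext^1_{\Gamma_{\!\omega}}(W, K') = 0$, the sequence splits and $W \in \add(T)$.

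With $\omega = \add(T)$ the remaining conclusions follow compactly: basic-ness of $T$ is given by Theorem \ref{AR91, 5.5}(a); sincerity of $T$ follows from the sincerity of $\omega = \add(T)$, since every composition factor of a sincere module in $\add(T)$ must appear in some indecomposable summand of $T$; and $T \in \omega \subseteq T^{\perp_{\tau_n}}$ makes $T$ $\tau_n$-rigid in $\modu(\Lambda)$. Since $\omega = \add(T)$ forces $\ann(\omega) = \ann(T)$, we get $\Gamma_{\!T} = \Gamma_{\!\omega}$, so $T$ is tilting in $\modu(\Gamma_{\!T})$ and $T^{\perp_{\Gamma_{\!T}}} = \omega^{\perp_{\tau_n}} = T^{\perp_{\tau_n}}$. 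For the projective dimension: (C4) gives $\pd_{\Gamma_{\!T}}(T) \leq m$, while for $\pd_{\Gamma_{\!T}}(T) \leq n$, given $X \in \modu(\Gamma_{\!T})$, Lemma \ref{rtomega}(b) delivers a coresolution $0 \to X \to X_0 \to \cdots \to X_n \to 0$ with $X_i \in T^{\perp_{\Gamma_{\!T}}}$, and dimension shift using $\Ext^{\geq 1}_{\Gamma_{\!T}}(T, X_i) = 0$ gives $\Ext^{n+1}_{\Gamma_{\!T}}(T, X) \simeq \Ext^1_{\Gamma_{\!T}}(T, X_n) = 0$. Finally, $rk(T) = rk(\Gamma_{\!T})$ holds because $T$ is tilting in $\modu(\Gamma_{\!T})$, and $rk(\Gamma_{\!T}) = rk(\Lambda)$ by Lemma \ref{LbAnn} since $T$ is sincere.

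The delicate point I expect is the inclusion $\omega \subseteq \add(T)$: the $\Ext^1$-vanishing we can extract directly from the $\tau_n$-perpendicular definition naturally lives in $\modu(\Lambda)$, while Theorem \ref{AR91, 5.5}(b) produces an extension in $\modu(\Gamma_{\!\omega})$, so the bridge is precisely the injection $\Ext^1_{\Gamma_{\!\omega}}(-,-) \hookrightarrow \Ext^1_\Lambda(-,-)$.
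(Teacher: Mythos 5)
Your proposal is correct and follows essentially the same route as the paper: apply the Auslander--Reiten correspondence (Theorem \ref{AR91, 5.5}) to $\omega^{\perp_{\tau_n}}$ inside $\modu(\Gamma_{\!\omega})$ using Lemma \ref{rtomega} and (C2)--(C3), identify $\omega=\add(T)$ via the two splitting arguments from (C1) and Theorem \ref{AR91, 5.5}(b) with Corollary \ref{CoroNAIR, 3.4}(a), and then read off sincerity, $\tau_n$-rigidity, the rank equality via Lemma \ref{LbAnn}, and the bound $\pd_{\Gamma_{\!T}}(T)\leq\min\{m,n\}$. The only deviation is that where the paper cites \cite[Corollary 5.12 (e1)]{AM21} for $\pd_{\Gamma_{\!T}}(T)\leq n$, you give a direct dimension-shift along the coresolution from Lemma \ref{rtomega}(b), which is a perfectly good self-contained substitute; your explicit remark on the injectivity of $\Ext^1_{\Gamma_{\!\omega}}\hookrightarrow\Ext^1_\Lambda$ also makes precise a point the paper passes over silently.
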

\begin{proof} By Lemma \ref{rtomega} (a), we have that $\omega^{\perp_{\tau_n}}$ is right thick in $\modu(\Lambda)$ and thus it is so in $\modu(\Gamma_{\!\omega}).$ Furthermore, from Lemma \ref{rtomega} (b), we know that $(\omega^{\perp_{\tau_n}})^{\vee}_n=\modu(\Gamma_{\!\omega}).$
Then, by the above properties of $\omega^{\perp_{\tau_n}}$ in $\modu(\Gamma_{\!\omega}),$ we get from  Theorem \ref{AR91, 5.5} that there is a tilting $T$ in $\modu(\Gamma_{\!\omega})$ such that $\omega^{\perp_{\tau_n}}=T^{\perp_{\Gamma_{\!\omega}}}.$
\

 Let us prove that 
 $\omega=\add(T).$ Indeed, since $T\in \omega^{\perp_{\tau_n}}$ and $\omega$ is a relative generator in $\omega^{\perp_{\tau_n}},$  there is an exact sequence $0\to K\to W\to T\to0$ in 
 $\modu(\Gamma_{\!\omega}),$ where $K\in \omega^{\perp_{\tau_n}}$ and $W\in\omega.$ 
 Thus the above sequence splits and then $\add(T)\subseteq \omega.$ Let now $W\in\omega\subseteq \omega^{\perp_{\tau_n}}=T^{\perp_{\Gamma_{\!\omega}}}.$ Hence, by Theorem \ref{AR91, 5.5}, the is an exact sequence $0\to X\to T'\to W\to 0,$ with $T'\in\add(T)$ and $X\in T^{\perp_{\Gamma_{\!\omega}}}=\omega^{\perp_{\tau_n}}\subseteq\cap_{i=1}^n\omega^{\perp_i}$ (see Corollary \ref{CoroNAIR, 3.4} (a)). Thus, the preceding exact sequence splits and then $\omega\subseteq\add(T).$ Therefore,  $\omega=\add(T)$ and then $T$ is sincere in $\modu(\Lambda)$ since $\omega$ has a sincere $\Lambda$-module. In particular, we also have that 
 $\Gamma_{\!\omega}=\Gamma_{\!T}$ and $\omega^{\perp_{\tau_n}}=T^{\perp_{\tau_n}}.$ On the other hand, from Lemma \ref{LbAnn} and \cite[Theorem 1.19]{Mi86}, we get that  $rk(\Lambda)=rk(\Gamma_{\!T})=rk(T).$ Moreover $\pd_{\Gamma_{\!\omega}}(T)=\pd_{\Gamma_{\!\omega}}(\omega)\leq m$ and thus $T$ is $m$-tilting in $\modu(\Gamma_{\!\omega}).$  Note that $T$ is $\tau_n$-rigid in $\modu(\Lambda)$ since $\add(T)=\omega\subseteq \omega^{\perp_{\tau_n}}.$ Finally, by 
 \cite[Corollary 5.12 (e1)]{AM21}, we get that $\pd_{\Gamma_{\!T}}(T)\leq n$ since 
 $(\omega^{\perp_{\tau_n}})^{\vee}_n=\modu(\Gamma_{\!\omega}).$
 \end{proof}

\begin{teo}\label{EquiTaurig} Let $M$ be $\tau_n$-tilting in $\modu(\Lambda)$ and $n\geq m\geq 1.$ Then, the following statements are equivalent.
\begin{itemize}
\item[(a)] $M$ is $\tau_m$-rigid in $\modu(\Gamma_{\!\!M}).$
\item[(b)] $\add(M)\in\mathfrak{C}_{\tau_{n,m}}(\Lambda).$
\item[(c)] $\pd_{\Gamma_{\!\!M}}(M)\leq m,$ $M^{\perp_{\tau_n}}$ is preenveloping in $\modu(\Lambda)$ and 
 $\inj(\Gamma_{\!\!M})\subseteq M^{\perp_{\tau_n}}.$
\item[(d)] $M$ is $m$-tilting in $\modu(\Gamma_{\!\!M}).$
\item[(e)] $M$ is partial $m$-tilting in $\modu(\Gamma_{\!\!M}).$
\end{itemize}
\end{teo}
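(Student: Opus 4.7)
The plan is to prove $(a)\Leftrightarrow(d)\Leftrightarrow(e)$ and separately $(d)\Rightarrow(c)\Rightarrow(b)\Rightarrow(d)$, hinged on the identification $M^{\perp_{\tau_n}}=\gen_n(M)$ from Theorem~\ref{p4. NAIR, 2.2} together with Bazzoni's Theorem~\ref{Ba04, 3.11} (applied inside $\modu(\Gamma_{\!\!M})$) and the compatibility machinery of Theorem~\ref{Comp-tau}. For the first cycle: $(a)\Rightarrow(d)$ is immediate from Theorem~\ref{NAIR, 2.2}(a), because the standing hypothesis combined with (a) says exactly that $M$ is $\tau_{n,m}$-tilting; $(d)\Rightarrow(e)$ is trivial; and $(e)\Rightarrow(a)$ follows by applying Corollary~\ref{CoroNAIR, 3.4}(b) inside $\modu(\Gamma_{\!\!M})$, since partial $m$-tilting supplies $\pd_{\Gamma_{\!\!M}}(M)\leq m$, whence $M^{\perp_{\tau_{m,\Gamma_{\!\!M}}}}=M^{\perp_{\Gamma_{\!\!M}}}$, and then $M\in M^{\perp_{\Gamma_{\!\!M}}}$ is exactly $\tau_m$-rigidity in $\modu(\Gamma_{\!\!M})$.

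For $(d)\Rightarrow(c)$: $\pd_{\Gamma_{\!\!M}}(M)\leq m$ is built into (d); $\inj(\Gamma_{\!\!M})\subseteq M^{\perp_{\Gamma_{\!\!M}}}$ is automatic; since $n\geq m$, Bazzoni gives $M^{\perp_{\Gamma_{\!\!M}}}=\gen_n(M)=M^{\perp_{\tau_n}}$, so $\inj(\Gamma_{\!\!M})\subseteq M^{\perp_{\tau_n}}$, while the preenveloping property of $M^{\perp_{\Gamma_{\!\!M}}}$ in $\modu(\Gamma_{\!\!M})$ from Theorem~\ref{AR91, 5.5}(a) passes to $\modu(\Lambda)$ via Remark~\ref{Funfin=}. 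For $(c)\Rightarrow(b)$ I would verify (C1)--(C4) for $\omega:=\add(M)$: sincerity by Theorem~\ref{c1}(a); the relative-generator condition by Proposition~\ref{l1, NHa95} after observing $M^{\perp_{\tau_n}}=\gen_n(M)\subseteq\gen(M)\subseteq\modu(\Gamma_{\!\!M})$; the two inclusions in (C2) from that same identity and from the hypothesis of (c); (C3) by pulling the preenveloping property back into $\modu(\Gamma_{\!\!M})$ via Remark~\ref{Funfin=}; and (C4) by hypothesis. Finally $(b)\Rightarrow(d)$ is a direct application of Theorem~\ref{Comp-tau}: it produces a basic sincere $\tau_n$-rigid $T$ with $\add(T)=\add(M)$ that is $\min\{m,n\}=m$-tilting in $\modu(\Gamma_{\!T})$, and the equality $\add(T)=\add(M)$ forces $\Gamma_{\!T}=\Gamma_{\!\!M}$, so the $m$-tilting property transfers to $M$.

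The main delicate point I anticipate is the systematic bookkeeping around the algebra change $\Lambda\leadsto\Gamma_{\!\!M}$: one must justify that $\gen_n(M)$ is insensitive to whether it is computed in $\modu(\Lambda)$ or in $\modu(\Gamma_{\!\!M})$ (automatic since $\add(M)\subseteq\modu(\Gamma_{\!\!M})$), that the preenveloping hypothesis moves freely between the two categories via Remark~\ref{Funfin=}, and that Bazzoni's theorem can be applied internally in $\modu(\Gamma_{\!\!M})$ with index $n\geq m$. None of these is deep, but together they are exactly what legitimizes the central identity $M^{\perp_{\tau_n}}=M^{\perp_{\Gamma_{\!\!M}}}$ and makes the two cycles close.
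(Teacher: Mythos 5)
Your proposal is correct and follows essentially the same route as the paper: both hinge on the identity $M^{\perp_{\tau_n}}=\gen_n(M)$ from Theorem~\ref{p4. NAIR, 2.2}, Bazzoni's characterization applied inside $\modu(\Gamma_{\!\!M})$, Theorem~\ref{NAIR, 2.2}(a), Theorem~\ref{Comp-tau}, Remark~\ref{Funfin=} and Corollary~\ref{CoroNAIR, 3.4}(b), differing only in how the implication cycle is arranged (the paper proves (a)$\Rightarrow$(b), (b)$\Leftrightarrow$(c), (b)$\Rightarrow$(d)$\Rightarrow$(e)$\Rightarrow$(a), while you split it into two loops through (d)). The bookkeeping points you flag (insensitivity of $\gen_n(M)$ to the ambient category, transfer of preenvelopes via Remark~\ref{Funfin=}) are exactly the ones the paper relies on, and your handling of them is sound.
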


\begin{proof} Since $M$ is $\tau_n$-tilting, by Proposition \ref{p4. NAIR, 2.2} (b) we have that $M^{\perp_{\tau_n}}=\gen_n(M)=\gen_n({}_{\Gamma_{\!\!M}}M)\subseteq\modu(\Gamma_{\!\!M}).$ Moreover, from Theorem \ref{c1} (a), $M$ is sincere and $rk(M)=rk(\Lambda).$ Finally, by Proposition \ref{l1, NHa95}, $\add(M)$ is a relative generator 
in $M^{\perp_{\tau_n}}.$

 (a) $\Rightarrow$ (b) By Theorem \ref{NAIR, 2.2} $M$ is $m$-tilting in $\modu(\Gamma_M).$ Hence by Theorem \ref{Ba04, 3.11} 
 $\gen_n({}_{\Gamma_{\!\!M}}M)=\gen_m({}_{\Gamma_{\!\!M}}M)=M^{\perp_{\Gamma_{\!\!M}}}$ and thus 
 $\inj(\Gamma_{\!\!M})\subseteq M^{\perp_{\Gamma_{\!\!M}}}=M^{\perp_{\tau_n}}.$
 \
 
 We prove now that $M^{\perp_{\tau_n}}$ is preenveloping. 
 Indeed, since $M$ is $m$-tilting in $\modu(\Gamma_{\!\!M}),$ by  Theorem \ref{AR91, 5.5} we get that $M^{\perp_{\Gamma_{\!\!M}}}=M^{\perp_{\tau_n}}$ is preenveloping in 
 $\modu(\Gamma_{\!\!M}).$  
\ 

(b) $\Leftrightarrow$ (c)  It follows from by Remark \ref{Funfin=}.
\

(b) $\Rightarrow$ (d) It follows from Theorem \ref{Comp-tau}.
\ 
 
(d) $\Rightarrow$ (e)  It is clear. 
 \
 
(e) $\Rightarrow$ (a) It follows from Corollary \ref{CoroNAIR, 3.4} (b).
\end{proof}

\section{Some open questions}

In this section we will address some questions that have left this research. 
\vspace{0.2cm}

{\bf Conjecture 1:} {\it If $M$ is $\tau_n$-tilting in $\modu(\Lambda),$ then $M^{\perp_{\tau_n}}$ is preenveloping.}
\vspace{0.2cm}

Let us show some cases where Conjecture 1 is true: (1) $M$ is $n$-tilting (see Theorem \ref{AR91, 5.5} (a) and Corollary \ref{CoroNAIR, 3.4} (b)), (2)  $M$ is $\tau$-tilting (see \cite[Theorem 2.10]{AIR14}) and (3) $M$ is  $\tau_{n,m}$-tilting for $n\geq m\geq 1$ (see Theorem \ref{EquiTaurig}).

From the view point of approximation theory, is it possible to have some kind of left approximations where the class $M^{\perp_{\tau_n}}$  is involved?. If we consider ``relative'' left approximations, in the sense that we introduce in what follows, we will give a positive answer to Conjecture 1 but just for the relative context.

\begin{defi} Let $\X,$ $\Y$ be classes of objects in $\modu(\Lambda).$ We say that 
a morphism $f:C\to Y$ in $\modu(\Lambda)$ is a $(\X,\Y)$-preenvelope of $C$ if $Y\in\Y$ and for any $X\in\X$ the map $\Hom_\Lambda(f,X):\Hom_\Lambda(Y,X)\to \Hom_\Lambda(C,X)$ is surjective. In case any $C\in\modu(\Lambda)$ has an $(\X,\Y)$-preenvelope, we say that the pair $(\X,\Y)$ is preenveloping.
\end{defi}

\begin{pro} The pair 
$(M^{\perp_{\tau_n}}, H^0(\prn(M)^{\perp_{>0}}))$ is preenveloping, for any  $\tau_n$-tilting $M$ in $\modu(\Lambda).$ 
\end{pro}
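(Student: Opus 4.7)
Given $C \in \modu(\Lambda)$, the plan is to exploit the silting object $T := \prn(M)$ in $\homo$ and construct a preenvelope as the $H^0$ of a suitable morphism in $\homo$. First I would produce a distinguished triangle
$$\prn(C) \xrightarrow{g^\bullet} Y^\bullet \to Z^\bullet \to \prn(C)[1] \qquad (\star)$$
with $Y^\bullet \in \prn(M)^{\perp_{>0}}$ and $Z^\bullet$ lying in the extension-closure of $\{\add(T)[k] : k \leq 0\}$; then $Y := H^0(Y^\bullet) \in H^0(\prn(M)^{\perp_{>0}})$ and $f := H^0(g^\bullet) : C \to Y$ would be the candidate preenvelope.

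To build $(\star)$ I would invoke the bounded co-t-structure on $\homo$ induced by the silting $T$ (as in \cite[Section~5]{MSSS13} or \cite[Proposition~2.23]{AI12}): its co-heart is $\add(T)$, and $\prn(C) \in \thick(T) = \homo$ admits an \emph{ordered} finite filtration whose successive quotients lie in $\add(T)[-a], \add(T)[-a+1], \ldots, \add(T)[b]$ for some $a, b \geq 0$. Splitting the filtration between shifts $-1$ and $0$ gives a triangle $A^\bullet \to \prn(C) \to Y^\bullet \to A^\bullet[1]$ with $A^\bullet \in \add(T)[-a]*\cdots*\add(T)[-1]$ and $Y^\bullet \in \add(T)[0]*\cdots*\add(T)[b]$; since each $T[k]$ with $k \geq 0$ lies in $T^{\perp_{>0}}$ (as $T$ is presilting) and this class is closed under extensions, $Y^\bullet \in T^{\perp_{>0}}$. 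A rotation yields $(\star)$ with $Z^\bullet := A^\bullet[1]$ filtered by $\add(T)[k]$ with $k \leq 0$. Now for any $X \in M^{\perp_{\tau_n}}$, Theorem~\ref{NAIR, 3.4} places $\prn(X)$ in $\prn(M)^{\perp_{>0}}$, and $\HomK(T[k], \prn(X)[1]) = \HomK(T, \prn(X)[1-k]) = 0$ for every $k \leq 0$ since $1 - k \geq 1$; induction on the filtration of $Z^\bullet$ gives $\HomK(Z^\bullet, \prn(X)[1]) = 0$. Applying $\HomK(-, \prn(X))$ to $(\star)$ then makes $(g^\bullet)^\ast$ surjective, so any $\phi : C \to X$, lifted to $\phi^\bullet : \prn(C) \to \prn(X)$ via the comparison lemma, factors as $\phi^\bullet = \psi^\bullet g^\bullet$; setting $\psi := H^0(\psi^\bullet)$ gives $\psi f = \phi$.

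The main obstacle I anticipate is arranging the filtration of $\prn(C)$ to be truly ordered by increasing shift, so that the cut at shift~$0$ cleanly separates the negative-shift part (absorbed into $Z^\bullet$) from the non-negative-shift part (which must land in $T^{\perp_{>0}}$). This ordering is exactly what the silting weight decomposition (in Bondarko's sense) supplies, so invoking the correct structural result is essential; a more hands-on alternative that iteratively kills $\HomK(T, \prn(C)[k])$ for $k = 1, \ldots, n$ would have to cope with lower-degree extensions potentially being reintroduced at each step (silting does not force $\Hom(T, T[j])=0$ for $j<0$), making the weight-filtration route cleaner.
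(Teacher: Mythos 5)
Your argument is correct and follows essentially the same route as the paper: the paper obtains your triangle $(\star)$ directly from the co-$t$-structure decomposition of \cite[Proposition~2.23(c)]{AI12} (a triangle $Q^\bullet \to \prn(C) \xrightarrow{g^\bullet} P^\bullet \to Q^\bullet[1]$ with $P^\bullet \in \prn(M)^{\perp_{>0}}$ and $Q^\bullet \in {}^{\perp_0}(\prn(M)^{\perp_{>0}})$), and then, exactly as you do, lifts $\phi$ to $\phi^\bullet$ by the comparison lemma, deduces surjectivity of $\Hom(g^\bullet,\prn(X))$ from the vanishing on $Q^\bullet$, and applies $H^0$. Your explicit reconstruction of that triangle from the ordered weight filtration, and the direct check that $Z^\bullet$ is left-orthogonal to $\prn(X)[1]$, merely unpack what the cited proposition already provides.
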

\begin{proof} Let $M$ be $\tau_n$-tilting in $\modu(\Lambda).$ Consider $N\in\modu(\Lambda).$ Then, by \cite[Proposition 2.23 (c)]{AI12} there exist a distinguished triangle in $\homo$
$$\eta:\; Q^\bullet \to \prn(N)\xrightarrow{g^\bullet}P^\bullet\to Q^\bullet[1],$$
where $P^\bullet\in\prn(M)^{\perp_{>0}}$ and $Q^\bullet\in {}^{\perp_0}(\prn(M)^{\perp_{>0}}).$ We assert that $H^0(g^\bullet):N\to H^0(P^\bullet)$ is a 
$(M^{\perp_{\tau_n}},H^0(\prn(M)^{\perp_{>0}}))$-preenvelope of $N.$ Indeed, let 
$f:N\to K$ be in $\modu(\Lambda),$ with $K\in M^{\perp_{\tau_n}}.$ In particular, by Theorem \ref{NAIR, 3.4} $\prn(K)\in \prn(M)^{\perp_{>0}}$ and thus 
$\Hom_{\homo}(Q^\bullet,\prn(K))=0.$
\

By the comparison lemma in homological algebra, there exists $f^\bullet:\prn(N)\to \prn(K)$ in $\homo$ with $H^0(f^\bullet)=f.$ Applying the functor $(-,\prn(K)):=\Hom_{\homo}(-,\prn(K))$ to $\eta$ and since $(Q^\bullet,\prn(K))=0,$ we get that
 $$(g^\bullet, \prn(K)):(P^\bullet,  \prn(K))\to ( \prn(N), \prn(K))$$ is surjective. Hence there exists $h^\bullet:P^\bullet\to  \prn(K)$ in $\homo$ such that 
$h^\bullet g^\bullet= f^\bullet.$ Then $f=H^0(h^\bullet) H^0(g^\bullet)$ and thus the result follows.
\end{proof}

\begin{rk} Let $M$ be $\tau_n$-rigid in $\modu(\Lambda).$ Then, by Theorem \ref{NAIR, 3.4}, we get that 
$M^{\perp_{\tau_n}}\subseteq H^0(\prn(M)^{\perp_{>0}}).$ Thus, if $M^{\perp_{\tau_n}}= H^0(\prn(M)^{\perp_{>0}}),$ then Conjecture 1 would have a positive answer in this case.
\end{rk}

In the following example we show that we could have a $\tau_{n}$-tilting $M$ such  that $M^{\perp_{\tau_n}}$ is preenveloping and however 
$M^{\perp_{\tau_n}}\neq H^0(\prn(M)^{\perp_{>0}}).$ 

\begin{ex} Let $\Lambda$ be the algebra given in Example \ref{Ejp1}. Consider the $\Lambda$-module
$M:=I(1)\oplus I(2)\oplus I(3).$ In particular, it is clear that $M\in M^\perp.$
\

Note that $\coresdim_{\add(M)}(\Lambda)\leq 4$ since we have the following exact sequences
$$0\to P(1)\to I(3)\to I(2)\to I(1)\to I(3)\to I(2)\to0,$$
$$0\to P(2)\to I(1)\oplus I(3)\to I(2)\oplus I(2)\to I(1)\to I(3)\to I(2)\to0,$$
$$0\to P(3)\to I(1)\to I(2)\to I(1)\to I(3)\to I(2)\to0.$$
On the other hand, using the following minimal projective resolutions
$$0\to P(3)\to P(2)\to P(1)\to P(3)^2\to P(2)\oplus P(1)\to I(3)\to0,$$
$$0\to P(3)\to P(2)\to P(1)\to P(3)\to P(1)\to I(2)\to0,$$
$$0\to P(3)\to P(2)\to P(1)\to P(3)\to P(2)\to I(1)\to0,$$
we get that $\pd(M)\leq 4$ and thus $M$ is $4$-tilting (and then $\tau_4$-tilting) in $\modu(\Lambda).$ By Corollary \ref{CoroNAIR, 3.4} (b), $M^{\perp_{\tau_4}}=M^\perp$  which is preenveloping by Theorem \ref{AR91, 5.5} (a).

Let $f: I(3)\to I(1)$ be the morphism in $\modu(\Lambda)$ such that  $\Ima(f)=S(1).$ By Lemma \ref{l1, NAIR, 3.4} (b), there exists $f^\bullet: \pr_{\geq-4}(I(3))\to\pr_{\geq-4}(I(1))$ in $\homo$ with $H^0(f^\bullet)=f.$ Thus we have the distinguished triangle in $\homo$
$$\eta:\;\pr_{\geq-4}(I(3))\xrightarrow{f^\bullet}\pr_{\geq-4}(I(1))\to Q^\bullet\to\pr_{\geq-4}(I(3))[1],$$ 
where $Q^\bullet:=\Cone(f^\bullet).$ Since $\pr_{\geq-4}(I(3)),$ $\pr_{\geq-4}(I(1))\in\pr_{\geq-4}(M)^{\perp_{>0}}$ (see Theorem \ref{NAIR, 3.4}), it is clear that 
$Q^\bullet\in\pr_{\geq-4}(M)^{\perp_{>0}}.$ Applying the cohomological functor $H^0$ to $\eta,$  we have the exact sequence $I(3)\to I(1)\to H^0(Q^\bullet)\to 0$ and thus 
$H^0(Q^\bullet)\simeq \Coker(f)=\begin{matrix}2\\3\end{matrix}.$ Consider the following exact diagram in $\modu(\Lambda)$
$$\xymatrix{0\ar[r]&S(3)\ar[r]^j\ar[d]_i&P(1)\ar[r]&I(2)\ar[r]&0\\
&H^0(Q^\bullet),}$$
where $i$ and $j$ are the inclusion maps. Since $\mathrm{Hom}_\Lambda(P(1),H^0(Q^\bullet))=0,$ it follows that $i$ does not factor through $j$. Therefore $\Ext^1_\Lambda(I(2),H^0(Q^\bullet))\neq0,$ proving that $H^0(Q^\bullet)\notin M^{\perp_{\tau_4}}.$ 
\end{ex}

{\bf Conjecture 2:} {\it If $M$ is $\tau_n$-tilting in $\modu(\Lambda),$ then $M$ is $\tau_{n,n}$-tilting.}
\vspace{0.2cm}

Note that, from Theorem \ref{EquiTaurig}, we have that: if Conjecture 2 is true then Conjecture 1 is also true. 

\begin{pro}\label{n=1} If $n=1$ then Conjecture 2 is true.
\end{pro}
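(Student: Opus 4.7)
By definition, $M$ is $\tau_{1,1}$-tilting precisely when $M$ is $\tau_1$-tilting in $\modu(\Lambda)$ \emph{and} $\tau_1$-rigid in $\modu(\Gamma_{\!\!M}).$ Since the first clause is exactly the hypothesis, the only task is to prove the second, that is, $\Hom_{\Gamma_{\!\!M}}(M,\tau_{\Gamma_{\!\!M}}(M))=0.$ My plan is to route this through the $\Ext^1$-vanishing criterion of Proposition \ref{AS81, 5.8}: first extract the appropriate $\Ext^1$-vanishing over $\Lambda$ from the $\tau$-rigidity of $M$ over $\Lambda,$ and then transfer it to $\Gamma_{\!\!M}.$

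\textbf{Steps.} Because $\pru(M)$ is silting in $\homo$ it is in particular presilting, so Lemma \ref{AIR14, 3.4} gives that $M$ is $\tau$-rigid in $\modu(\Lambda).$ Proposition \ref{AS81, 5.8}, applied with $N=M,$ therefore yields $M\in{}^{\perp_1}\gen(M),$ i.e., $\Ext^1_\Lambda(M,K)=0$ for every $K\in\gen_\Lambda(M).$ A routine observation is that $\gen_\Lambda(M)=\gen_{\Gamma_{\!\!M}}(M)\subseteq\modu(\Gamma_{\!\!M}),$ since every quotient of a finite direct sum of copies of $M$ is annihilated by $\ann(M).$ The genuinely substantive input is that for every $K\in\modu(\Gamma_{\!\!M})$ the canonical comparison map $\Ext^1_{\Gamma_{\!\!M}}(M,K)\to \Ext^1_\Lambda(M,K)$ is injective. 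Granting this, combining with the previous vanishing gives $\Ext^1_{\Gamma_{\!\!M}}(M,K)=0$ for every $K\in\gen_{\Gamma_{\!\!M}}(M),$ and a second application of Proposition \ref{AS81, 5.8}, now inside $\modu(\Gamma_{\!\!M}),$ delivers $\Hom_{\Gamma_{\!\!M}}(M,\tau_{\Gamma_{\!\!M}}(M))=0,$ which is exactly $\tau_1$-rigidity of $M$ in $\modu(\Gamma_{\!\!M}).$

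\textbf{The only obstacle.} Verifying the injectivity of the comparison map $\Ext^1_{\Gamma_{\!\!M}}(M,K)\hookrightarrow \Ext^1_\Lambda(M,K)$ is well-known but deserves a short justification, which is the hardest part of the proof (and it is not very hard). One shows that any extension $0\to K\to E\to M\to 0$ of $\Gamma_{\!\!M}$-modules which splits as an extension of $\Lambda$-modules already splits as an extension of $\Gamma_{\!\!M}$-modules: a $\Lambda$-linear section $s:M\to E$ between two modules annihilated by $\ann(M)$ is automatically $\Gamma_{\!\!M}$-linear, since for $a\in\ann(M)$ and $m\in M$ one has $s(am)=s(0)=0=as(m).$ Beyond this one line, the argument is entirely formal and does not require any of the deeper structural information about $\tau_1$-tilting modules developed earlier, such as the sincerity assertion of Theorem \ref{c1}.
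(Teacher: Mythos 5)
Your argument is correct, but it takes a genuinely different route from the paper. The paper's proof is a two-line affair resting on a structural fact about Auslander--Reiten translates under passage to a quotient algebra: by \cite[Lemma VIII.5.2]{ASS06}, $\tau_{\Gamma_{\!\!M}}(M)$ is a $\Lambda$-submodule of $\tau_\Lambda(M)$, whence $\Hom_{\Gamma_{\!\!M}}(M,\tau_{\Gamma_{\!\!M}}(M))\subseteq\Hom_\Lambda(M,\tau_\Lambda(M))=0$, the last vanishing coming (as in your first step) from the fact that $\pru(M)$ silting implies presilting implies $M$ is $\tau$-rigid over $\Lambda$. You instead avoid any comparison of AR translates across the quotient and route everything through the Auslander--Smal\o\ criterion (Proposition \ref{AS81, 5.8}) applied twice, once over $\Lambda$ and once over $\Gamma_{\!\!M}$, glued together by the observation that $\gen_\Lambda(M)=\gen_{\Gamma_{\!\!M}}(M)$ and that restriction of scalars along $\Lambda\twoheadrightarrow\Gamma_{\!\!M}$ embeds $\Ext^1_{\Gamma_{\!\!M}}(M,K)$ into $\Ext^1_\Lambda(M,K)$; your justification of that injectivity (a $\Lambda$-linear section between modules killed by $\ann(M)$ is automatically $\Gamma_{\!\!M}$-linear) is sound. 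What the paper's approach buys is brevity, at the cost of invoking a specific external lemma about $\tau$ and quotient algebras; what yours buys is self-containment, since Proposition \ref{AS81, 5.8} is already quoted in the paper and the change-of-rings step for $\Ext^1$ is elementary. Both proofs correctly reduce the claim to showing $M$ is $\tau$-rigid in $\modu(\Gamma_{\!\!M})$, which is all that Definition 1.1(d) requires beyond the hypothesis.
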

\begin{proof} Let $M$ be $\tau_1$-tilting in $\modu(\Lambda).$ By \cite[Lemma VIII.5.2]{ASS06},
$\tau_{\Gamma_{\!\!M}}(M)$  is a $\Lambda$-submodule of $\tau_\Lambda(M)$  and thus 
$\Hom_{\Gamma_{\!\!M}}(M,\tau_{\Gamma_{\!\!M}}(M))\subseteq\Hom_\Lambda(M,\tau_\Lambda(M))=0.$ Therefore $M$ is $\tau_1$-tilting in $\modu(\Gamma_{\!\!M}).$
\end{proof}

{\bf Conjecture 3:} {\it If $M$ is $\tau_n$-rigid and sincere in  $\modu(\Lambda)$ and $m$-tilting in $\modu(\Gamma_{\!\!M}),$ with $n\geq m\geq 1,$ then $M$ is 
$\tau_n$-tilting in $\modu(\Lambda).$}
\vspace{0.2cm}

 The relevance of Conjecture 3 is that the validity of such conjecture implies the so called ``$\tau_{n,m}$-tilting correspondence theorem''  which is stated as follows:

\begin{teo}\label{corresT}  If Conjecture 3 is true, then for $M\in\modu(\Lambda)$ and $n\geq m\geq 1,$  the following statements (a), (b) and (c) are equivalent.
\begin{itemize}
\item[(a)] $M$ is $\tau_{n,m}$-tilting in $\modu(\Lambda).$
\item[(b)] $M$ is $\tau_n$-rigid and sincere in  $\modu(\Lambda)$ and $m$-tilting in $\modu(\Gamma_{\!\!M}).$
\item[(c)] $\add(M)\in\mathfrak{C}_{\tau_{n,m}}(\Lambda).$
\end{itemize}
Moreover, the map $M\mapsto\add(M)$ induces a bijection between iso-classes of 
basic $\tau_{n,m}$-tilting $\Lambda$-modules and elements in the class $\mathfrak{C}_{\tau_{n,m}}(\Lambda),$ whose inverse is the map $\omega\mapsto \oplus_{W\in\mathrm{ind}(\omega)}\,W.$
\end{teo}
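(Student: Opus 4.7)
The plan is to establish (a) $\Leftrightarrow$ (b) and (a) $\Leftrightarrow$ (c); the bijection then follows by restricting $M\mapsto\add(M)$ to iso-classes and using the obvious inverse $\omega\mapsto\oplus_{W\in\mathrm{ind}(\omega)}W$. Conjecture 3 enters only in the step (b) $\Rightarrow$ (a); every other implication reduces to prior results in the paper, namely Theorems \ref{c1}, \ref{NAIR, 3.4}, \ref{NAIR, 2.2}, \ref{EquiTaurig}, \ref{Comp-tau} and Corollary \ref{CoroNAIR, 3.4}.

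For (a) $\Rightarrow$ (b): if $M$ is $\tau_{n,m}$-tilting, then $\prn(M)$ is silting and therefore presilting in $\homo$, so Theorem \ref{NAIR, 3.4} gives $\tau_n$-rigidity, Theorem \ref{c1}(a) gives sincerity, and Theorem \ref{NAIR, 2.2}(a) gives that $M$ is $m$-tilting in $\modu(\Gamma_{\!\!M})$. For (b) $\Rightarrow$ (a): Conjecture 3 directly supplies the missing ingredient that $M$ is $\tau_n$-tilting in $\modu(\Lambda)$, while the $m$-tilting hypothesis in $\modu(\Gamma_{\!\!M})$ yields $\pd_{\Gamma_{\!\!M}}(M)\leq m$ together with $M\in M^{\perp_{\Gamma_{\!\!M}}}$; Corollary \ref{CoroNAIR, 3.4}(b) applied inside $\modu(\Gamma_{\!\!M})$ then forces $M$ to lie in $M^{\perp_{\tau_m,\Gamma_{\!\!M}}}$, i.e., $M$ is $\tau_m$-rigid in $\modu(\Gamma_{\!\!M})$, whence $\tau_{n,m}$-tilting. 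For (a) $\Rightarrow$ (c): having $\tau_n$-tiltingness from (a), Theorem \ref{EquiTaurig} applies, and its equivalence of items (a) and (b), combined with the $\tau_m$-rigidity in $\modu(\Gamma_{\!\!M})$ already built into (a), yields $\add(M)\in\mathfrak{C}_{\tau_{n,m}}(\Lambda)$. For (c) $\Rightarrow$ (b): Theorem \ref{Comp-tau} applied to $\omega:=\add(M)$ produces a basic sincere $\tau_n$-rigid $T$ with $\add(T)=\add(M)$ that is $\min\{m,n\}=m$-tilting in $\modu(\Gamma_{\!T})$; since $\ann(M)=\ann(T)$ (hence $\Gamma_{\!\!M}=\Gamma_{\!T}$) and each of sincerity, $\tau_n$-rigidity and $m$-tiltingness in $\modu(\Gamma_{\!\!M})$ depends only on $\add(M)$, these properties transfer from $T$ to $M$.

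\textbf{Bijection and main obstacle.} By (a) $\Rightarrow$ (c) the map $M\mapsto\add(M)$ sends basic $\tau_{n,m}$-tilting modules into $\mathfrak{C}_{\tau_{n,m}}(\Lambda)$. Conversely, for $\omega\in\mathfrak{C}_{\tau_{n,m}}(\Lambda)$ the basic module $T_{\omega}:=\oplus_{W\in\mathrm{ind}(\omega)}W$ satisfies $\add(T_{\omega})=\omega$; by Theorem \ref{Comp-tau} it satisfies (b), and by the implication (b) $\Rightarrow$ (a) it is $\tau_{n,m}$-tilting. The two maps are mutual inverses because a basic module is recovered from its additive closure. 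The sole (but decisive) difficulty is the conditional use of Conjecture 3: the whole scheme rests on being able to lift the combination ``$\tau_n$-rigid plus sincere in $\modu(\Lambda)$ plus $m$-tilting in $\modu(\Gamma_{\!\!M})$'' to genuine $\tau_n$-tiltingness over $\Lambda$, which is exactly the step where the inductive construction in the proof of Theorem \ref{NAIR, 2.2}(b) breaks down for $n\geq 3$, as explained in the remark following that theorem.
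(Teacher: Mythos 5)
Your proposal is correct and follows essentially the route the paper intends: the paper's own proof merely says ``it follows from Theorem \ref{Comp-tau} and Theorem \ref{EquiTaurig}'' and leaves the details to the reader, and your argument supplies exactly those details, using Theorem \ref{EquiTaurig} for (a) $\Leftrightarrow$ (c), Theorem \ref{Comp-tau} for (c) $\Rightarrow$ (b), Theorems \ref{c1}, \ref{NAIR, 3.4}, \ref{NAIR, 2.2} for (a) $\Rightarrow$ (b), and Conjecture 3 together with Corollary \ref{CoroNAIR, 3.4}(b) for (b) $\Rightarrow$ (a). The routing of the conditional step through (b) $\Rightarrow$ (a), and the observation that the bijection reduces to the equivalences plus the fact that a basic module is determined by its additive closure, are exactly what the reader is expected to check.
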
 
\begin{proof} It follows from Theorem \ref{Comp-tau} and Theorem \ref{EquiTaurig}, and we left the details to the reader.
\end{proof}

Notice that Conjecture 3 is true for $n=m=1,2$ (see Theorem \ref{NAIR, 2.2} (b)). Thus, until now, we have proved such ``$\tau_{n,m}$-tilting correspondence theorem'' only for $n=m=1,2.$

\footnotesize

\vskip3mm \noindent Luis Armando Mart\'inez Gonz\'alez\\ Instituto de Matem\'aticas\\ Universidad Nacional Aut\'onoma de M\'exico.\\ 
Circuito Exterior, Ciudad Universitaria\\
C.P. 04510, M\'exico, D.F. MEXICO.\\ {\tt luis.martinez@matem.unam.mx}

\vskip3mm \noindent Octavio Mendoza Hern\'andez\\ Instituto de Matem\'aticas\\ Universidad Nacional Aut\'onoma de M\'exico.\\ 
Circuito Exterior, Ciudad Universitaria\\
C.P. 04510, M\'exico, D.F. MEXICO.\\ {\tt omendoza@matem.unam.mx}

\end{document}